\newcommand{\R}{\mathbb{R}}
\newcommand{\N}{\mathbb{N}}
\newcommand{\C}{\mathbb{C}}
\numberwithin{equation}{section}
\theoremstyle{plain}
\newtheorem{theorem}{Theorem}[section] % numerazione dipendente dal capitolo
\theoremstyle{definition}
\newtheorem{definition}[theorem]{Definition}
\newtheorem{remark}[theorem]{Remark}
\DeclarePairedDelimiter{\abs}{\lvert}{\rvert}
\DeclarePairedDelimiter{\norm}{\lVert}{\rVert}
\DeclarePairedDelimiter{\duality}{\langle}{\rangle}
\DeclareMathOperator{\dom}{dom}
\DeclareMathOperator{\divergence}{div}
\DeclareMathOperator*{\argmin}{arg\,min}
\renewcommand{\div}{\divergence} 
\newcommand{\eps}{\varepsilon}
\renewcommand{\phi}{\varphi}
\renewcommand{\bar}{\overline}
\renewcommand{\vec}{\mathbf}
\def\genspazio #1#2#3#4#5{#1^{#2}(#5,#4;#3)}
\def\spazio #1#2#3{\genspazio {#1}{#2}{#3}T0}
\def\LT {\spazio L}
\def\HT {\spazio H}
\def\C #1#2{\mathcal{C}^{#1}([0,T];#2)}
\def\Cw #1#2{\mathcal{C}_w^{#1}([0,T];#2)}
\def\Lx #1{L^{#1}(\Omega)}
\def\Lt #1{L^{#1}(0,T)}
\def\Lqt #1{L^{#1}(Q_T)}
\def\Hx #1{H^{#1}(\Omega)}
\def\Wx #1{W^{#1}(\Omega)}
\def\Cx #1{\mathcal{C}^{#1}(\bar{\Omega})}
\def\Accorpa #1#2 #3 {\gdef #1{\eqref{#2}--\eqref{#3}}%
	\wlog{}\wlog{\string #1 -> #2 - #3}\wlog{}}
\def\ls{<}
\def\gs{>}
\def\mezzo {\frac{1}{2}}
\def\de {\mathrm{d}}
\def\ddt {\frac{\de}{\de t}}
\def\weakstar {\stackrel{\ast}{\rightharpoonup}}
\def\n {\vec{n}}
\def\hh {\mathbbm{h}}
\def\phib {\bar{\phi}}
\def\mub {\bar{\mu}}
\def\sigmab {\bar{\sigma}}
\def\ub {\bar{u}}
\def\vb {\bar{v}}
\def\Uad {\mathcal{U}_{\text{ad}}}
\def\Vad {\mathcal{V}_{\text{ad}}}
\begin{document}
		
	\begin{center}
		
		\LARGE{\textbf{Optimal distributed control for a viscous non-local tumour growth model}}
		
		\vskip0.5cm
		
		\large{\textsc{Matteo Fornoni$^1$} \orcidlink{0000-0002-9787-047X}} \\
			\normalsize{e-mail: \texttt{matteo.fornoni01@universitadipavia.it}} \\
		\vskip0.5cm
				
		\small{$^1$Department of Mathematics "F. Casorati", Università degli Studi di Pavia, \\
			 via Ferrata 5, 27100, Pavia, Italy}
		
		\vskip0.5cm
		
	\end{center}

\begin{abstract}
\noindent
In this paper, we address an optimal distributed control problem for a non-local model of phase-field type, describing the evolution of tumour cells in presence of a nutrient. 
The model couples a non-local and viscous Cahn-Hilliard equation for the phase parameter with a reaction-diffusion equation for the nutrient. 
The optimal control problem aims at finding a therapy, encoded as a source term in the system, both in the form of radiotherapy and chemotherapy, which could lead to the evolution of the phase variable towards a desired final target. 
First, we prove strong well-posedness for the system of non-linear partial differential equations. 
In particular, due to the presence of a viscous regularisation, we can also consider double-well potentials of singular type and cross-diffusion terms related to the effects of chemotaxis. 
Moreover, the particular structure of the reaction terms allows us to prove new regularity results for this kind of system. 
Then, turning to the optimal control problem, we prove the existence of an optimal therapy and, by studying Fréchet-differentiability properties of the control-to-state operator and the corresponding adjoint system, we obtain the first-order necessary optimality conditions.

\vskip3mm

\noindent {\bf Key words:} non-local Cahn-Hilliard equation, well-posedness, regularity of solutions, optimal distributed control, singular potential, tumour growth.

\vskip3mm

\noindent {\bf AMS (MOS) Subject Classification:} 35K61, % ivp and bvp for nonlinear parabolic equations
 45K05, % integro-differential pdes
 35B65, % regularity of solutions
 35Q92, % pdes for biology, chemistry, etc
 49K20, % optimality conditions for control problems
 92C50. % applications to medicine 

\end{abstract}

\section{Introduction}

In recent years the mathematical modelling community has devoted much attention towards the understanding of intrinsic phenomena behind tumour growth, as well as the possibility of simulating medical treatments, which could drive the evolution of tumours. 
Without being by any means exhaustive, we cite \cite{BC1997_freeboundary, BP2003_modelling, Lorenzo2022_review, Lowengrub2010_review, cristini:lowengrub} and references therein. 
Regarding the mathematical analysis of these models, the main questions concern not only well-posedness, regularity and long-time behaviour of solutions, but also related issues such as optimal control or parameter identification problems, see for example \cite{AACGV2017, CRW2021, CSS2021_secondorder, GLS2021_sparseoc, GLSS2016, RSS2021}. 

In the present work, we consider a diffuse interface model, which is a non-local and viscous variant of the model originally introduced in \cite{HZO2012}. 
We start by recalling the original model in \cite{HZO2012}, which can be written in the following form:
\begin{alignat}{2}
	& \partial_t \phi = \div (m(\phi) \nabla \mu) + P(\phi) (\sigma + \chi (1-\phi) - \mu) \qquad && \text{in } Q_T,  \label{eq:philoc}\\
	& \mu = AF'(\phi) - B \Delta \phi- \chi \sigma \qquad && \text{in } Q_T,  \label{eq:muloc} \\
	& \partial_t \sigma = \div (n(\phi) \nabla (\sigma + \chi (1 - \phi))) - P(\phi) (\sigma + \chi (1-\phi) - \mu) \qquad && \text{in } Q_T, \label{eq:sigmaloc}
\end{alignat}
together with homogenous Neumann boundary conditions and initial conditions.
The main variable of the model is $\phi$, a phase-field which represents the difference in volume fractions between tumour cells and healthy cells coexisting in the spatial domain $\Omega \subseteq \R^N$, $N=2,3$. 
Physically, $\phi$ should assume only values between $-1$ and $1$, where $\phi \simeq 1$ is identified as the tumour phase and $\phi \simeq -1$ as the healthy phase. 
As a phase-field approximation, $\phi$ is free to assume values in $(-1,1)$, corresponding to a mixture of the two cell types; however, the interfacial region is generally kept small through an appropriate parameter. 
The evolving tumour region is, then, recovered as a level set of $\phi$ at values close to $1$. Besides being easier to deal with from a numerical point of view, one of the main advantages of diffuse interface models, when compared to free-boundary ones, where $\phi$ would assume only the values $-1$ or $1$, is the possibility of accounting for topological changes in the tumour mass, which can be quite common in certain stages of its evolution. 
The other variable $\sigma$ of the system is related to the concentration of a nutrient in the extracellular water, such as oxygen or glucose. 
The idea is that tumour cells proliferate by absorbing such substance, whereas the concentration of nutrient consequently varies depending on the areas in which the tumour evolves. 
Also in this case, $\sigma$ should physically be between $0$ and $1$, where $\sigma \simeq 0$ represents low concentrations of nutrient and $\sigma \simeq 1$ high ones.
In particular, $\phi$ satisfies a Cahn-Hilliard equation, which is well-suited in describing phase-separation phenomena (see \cite{miranville} and references therein), while the nutrient variable $\sigma$ satisfies a reaction-diffusion equation, and the two equations are coupled through an explicit reaction term, which models the interaction between nutrient and evolving tumour. 
We also account for another coupling term, associated with the parameter $\chi \ge 0$, that is responsible for the chemotaxis phenomenon, which concerns the natural movement of tumour cells towards regions with higher concentrations of nutrient.  

As we briefly said at the beginning, we intend to consider a non-local and viscous variant of the system \eqref{eq:philoc}--\eqref{eq:sigmaloc}.
Regarding the non-local aspect of the model, this is achieved by substituting the local term $-B \Delta \phi$ with one involving a convolution operator with a symmetric kernel $J$, possibly accounting for more complex interactions between cells, involving long-range competition. 
In this way, $\phi$ satisfies a non-local Cahn-Hilliard equation (see \cite{GL1997, GGG2023}), which can be seen as an approximation of the local one, when the kernel $J$ is suitably peaked around zero (see \cite{DRST2020, DST2021_1, DST2021_2}). Analysis and application of non-local phase-field models on tumour growth phenomena is still flourishing; we just mention some interesting works in this direction \cite{RSS2021, FLS2021, FLNOW2019}.
The second modification applied to the original model is the addition of a viscosity term $\tau \phi_t$ in the equation for the chemical potential $\mu$. Physically, it can be interpreted as a further dissipation acting on the system, coming from internal frictions (see \cite{N1988_viscousCH}). From the mathematical point of view, however, it acts as a regularising term, since it allows some additional regularity estimates. Moreover, we also add two external sources $u$ and $v$, which will then represent possible therapies in our optimal control problem.
 
In its most general form, the model that we are going to analyse is the following:
\begin{alignat}{2}
	& \partial_t \phi = \div (m(\phi) \nabla \mu) + P(\phi) (\sigma + \chi (1-\phi) - \mu) - \hh(\phi) u \qquad && \text{in } Q_T,  \label{eq:phi}\\
	& \mu = \tau \partial_t \phi + AF'(\phi) + Ba \phi - BJ \ast \phi - \chi \sigma \qquad && \text{in } Q_T,  \label{eq:mu} \\
	& \partial_t \sigma = \div (n(\phi) \nabla (\sigma + \chi (1 - \phi))) - P(\phi) (\sigma + \chi (1-\phi) - \mu) + v \qquad && \text{in } Q_T, \label{eq:sigma}
\end{alignat}
where $T>0$ is a fixed final time, $\Omega \subset \R^N$, $N=2,3$, is an open bounded sufficiently smooth domain and $Q_T = \Omega \times (0,T)$. 
We complement it with the following homogeneous Neumann boundary conditions and initial conditions:
\begin{alignat}{2}
	& m(\phi) \partial_{\n} \mu = n(\phi) \partial_{\n} (\sigma + \chi (1-\phi)) = 0 \qquad && \text{on } \partial \Omega \times (0,T), \label{bc} \\
	& \phi(0) = \phi_0, \quad \sigma(0) = \sigma_0 \qquad && \text{in } \Omega, \label{ic}
\end{alignat}
where $\n$ is the exterior normal unit vector to $\partial \Omega$. 

We now briefly introduce all the parameters that appear in the system. 
The auxiliary variable $\mu$ can be interpreted as a chemical potential in the Cahn-Hilliard equation, and, up to the additional viscosity term $\tau \partial_t \phi$, with $\tau \gs 0$, it is the variational derivative of the non-local free energy of Ginzburg-Landau type 
\[ \mathcal{E}(\phi,\sigma) = \int_{\Omega} A F(\phi) \, \de x + \int_{\Omega} \int_{\Omega} \frac{B}{4} J(x - y)(\phi(x) - \phi(y))^2 \, \de x \, \de y + \int_{\Omega} \frac{1}{2} \abs{\sigma}^2 + \chi \sigma (1-\phi) \, \de x. \]
The function $F$ is generally a potential of double-well type, possessing two global minima in correspondence of the two pure phases $-1$ and $1$. In most applications, typical choices of such potentials are a regular or polynomial one, a singular or logarithmic one, with derivatives exploding at $\pm 1$, or a double-obstacle one, which are respectively of the following form:
\begin{align*}
	& F_{\text{reg}}(s) = \frac{1}{4} (1 - s^2)^2, \quad s \in \R, \\
	& F_{\text{sing}}(s) = \frac{\theta}{2} \left[ (1+s) \log (1+s) + (1-s) \log (1-s) \right] - \frac{\theta_0}{2} s^2, \quad s \in (-1,1), \quad 0 \ls \theta \ls \theta_0, \\
	& F_{\text{obst}}(s) = \begin{cases}
		c(1-s^2) & \text{if } s \in [-1,1], \\
		+ \infty & \text{otherwise,}
	\end{cases} \quad c \gs 0.
\end{align*}
Actually, the global minima of the logarithmic potential $F_{\text{sing}}$ are not exactly in $-1$ and $1$, but in two intermediate values, which can be sufficiently close to them, depending on the choice of the parameters $\theta$ and $\theta_0$.
In this paper, we will consider only potentials of regular or singular type, excluding the double obstacle case, which can generally be treated with slightly different methods, as in \cite{CGS2017_deepquench, S2020_deepquench}. 
From a physical perspective, potentials of singular type are more fitting, since they can effectively limit the evolution of $\phi$ in the interval $[-1,1]$; in this case it is crucial to prove the so-called separation property, see the recent work \cite{GGG2023} for a detailed overview of the matter. 
This property is related to the fact that, during its evolution, the phase variable stays strictly inside the physical interval $(-1,1)$, which implies that the derivative of the potential does not actually blow-up. This is to be expected, since the variational structure of the system drives the evolution of $\phi$ towards the minima of the potential, which, in case of the singular one, are strictly inside the interval. In general this property is not always easy to prove and its validity is still an open problem in some situations, see however the recent works \cite{G2023_separation, P2023_separation} for substantial advances in the three dimensional non-local case. When considering a regular potential, instead, this property actually means that the phase parameter stays globally bounded during its evolution.
Here, thanks to the presence of the viscosity term, we are able to simplify these arguments and consider both regular and singular potentials, even in presence of reaction terms and sources. 
Proving this kind of property is important since it allows to effectively treat the singular potential as a regular one and compute its derivatives, and this is crucial to prove higher regularity for the solution of the system.
The second term in the energy stands as a non-local approximation of the local term $B \abs{\nabla \phi}^2/2$, which would penalise steep transitions in the interfaces between the two phases. 
The convolution kernel $J$ is generally taken of Newton or Bessel type; however, when studying non-local to local asymptotics of Cahn-Hilliard equations, one can also consider families of kernels suitably peaking around zero, see for instance \cite{DRST2020, DST2021_1, DST2021_2}. 
Finally, the parameters $A \gs 0$ and $B \gs 0$ are related to the width of the diffuse interface between the two phases, and are generally of the form $A= 1/\eps$ and $B =\eps$, for $\eps \ll 1$; whereas the parameter $\chi \ge 0$ and its corresponding term are related to the chemotactic effect.

Going back to the system \eqref{eq:phi}--\eqref{ic}, the functions $m(\phi)$ and $n(\phi)$ are called mobility functions and regulate the diffusion processes of the two variables. Generally, they can be constant or bounded both above and below, but in some cases, especially when $F$ is singular, $m$ can be degenerate on $\pm 1$. For instance, when dealing with the logarithmic potential $F_{\text{sing}}$, one can choose $m(s)=1-s^2$, in order to further enforce the physical condition and to compensate the singularities of the potential, see \cite{FLR2017, EG1996, FGG2021}. 
The function $P$ is a proliferation function that calibrates the strength of the reaction terms, which, in turn, are written in this form due to some chemical phenomenological laws (see \cite{HZO2012}). 
Typically, $P$ can be of the form 
\[ P(s) = \begin{cases}
	P_0 & \text{if } s \le -1, \\
	\frac{P_1 - P_0}{2}(s+1) + P_0 & \text{if } -1 \ls s \ls 1, \\
	P_1 & \text{if } s \ge 1,
\end{cases} \]
so that $P(-1)=P_0$ and $P(1)=P_1$, with $0 \le P_0 \ls P_1$, i.e. the proliferation is faster in the tumour phase. 
Generally, one takes $P_0 = 0$ and $P_1 = 1$.
Note that, for technical reasons, we will be forced to assume that $P$ is strictly positive, albeit still possibly very close to zero.
This will be needed in proving the existence of strong solutions to \eqref{eq:phi}--\eqref{ic}; in case of a singular potential, it will be used also for weak solutions (see Remark \ref{strategy:wsols} below).  
However, this is not unprecedented; for example, the same hypothesis was needed in \cite{CRW2021} to study long-time dynamics of a similar tumour growth model.  
Observe also that this kind of reaction term $P(\phi)(\sigma + \chi (1-\phi) - \mu)$, with opposite signs in \eqref{eq:phi} and \eqref{eq:sigma}, implies the conservation of mass for the sum of $\phi$ and $\sigma$, in absence of external sources (i.e. $u=v=0$). Indeed, by multiplying \eqref{eq:phi} and \eqref{eq:sigma} by $1$, integrating over $\Omega$ and summing up, we obtain the identity:
\[ \ddt \int_\Omega \phi(t) + \sigma(t) \, \de x = \int_\Omega \left( - \hh(\phi(t)) u + v \right) \, \de x. \]
Finally, the external sources $u$ and $v$ are the controls that we are going to use in our optimal control problem to direct the evolution of the system towards a certain objective. 
They can be interpreted as external therapies acting on the tumour. 
In particular, $u$ can represent a radiotherapy directly acting on the proliferation of the tumour cells, whereas $v$ can be seen as a chemotherapy acting on the tumour through drugs. 
The function $\hh(\phi)$ is an additional parameter that can be used to distribute the radiotherapy through particular strategies, see for example \cite{GLR2018, CSS2021}.

Regarding the well-posedness and regularity of this kind of system, there are already many contributions to the local version of the model; we just cite \cite{FGR2015_TumGrowth, CGRS2015, EG2019}. 
On the non-local model, there are fewer ones, but we recall \cite{FLR2017}, where the authors showed well-posedness for the system without viscosity (i.e.~$\tau = 0$) in the case of a regular potential and bounded mobility or of a singular potential and degenerate mobility. 
Then, for the same system, but without chemotaxis (i.e.~$\chi = 0$), in \cite{FLS2021} the authors proved strong regularities for the solutions of the system, when considering singular potentials and degenerate mobility. 
%Conversely, the non-viscous system with regular potential is currently under study, since it requires different techniques to treat some terms that do not appear in \cite{FLS2021}, due to compensations with the degenerate mobility.
Finally, we also cite \cite{SS2021}, where, with an additional regularising term $\alpha \partial_t \mu$ in \eqref{eq:phi} and $\tau \gs 0$, the authors proved strong well-posedness for a similar system with full chemotaxis, constant mobility and both regular and singular potentials. 
However, they consider a different reaction term, introduced by Garcke et al. (see \cite{GLSS2016, GL2017}) through thermodynamic and kinetic considerations. 
Moreover, they also perform an asymptotic analysis for $\alpha \to 0$, but they are not able to recover the full strong regularity for the solutions, and only for regular potentials, if part of the chemotactic effect is neglected. 
Regarding these themes, the main novelty of this work is to prove strong well-posedness for \eqref{eq:phi}--\eqref{ic}, in the case of constant mobilities, with full chemotaxis, positive viscosity and both regular and singular potentials, i.e.~for the system:
\begin{alignat}{2}
	& \partial_t \phi = m \Delta \mu + P(\phi) (\sigma + \chi (1-\phi) - \mu) - \hh(\phi) u \qquad && \text{in } Q_T,  \label{eq:phicost}\\
	& \mu = \tau \partial_t \phi + AF'(\phi) + Ba \phi - BJ \ast \phi - \chi \sigma \qquad && \text{in } Q_T,  \label{eq:mucost} \\
	& \partial_t \sigma = n (\Delta \sigma - \chi \Delta \phi) - P(\phi) (\sigma + \chi (1-\phi) - \mu) + v \qquad && \text{in } Q_T, \label{eq:sigmacost} \\
	& m \partial_{\n} \mu = n \partial_{\n} (\sigma - \chi \phi) = 0 \qquad && \text{on } \partial \Omega \times (0,T), \label{bccost} \\
	& \phi(0) = \phi_0, \quad \sigma(0) = \sigma_0 \qquad && \text{in } \Omega, \label{iccost}
\end{alignat}
where we take $m(\phi) \equiv m \gs 0$ and $n(\phi) \equiv n \gs 0$ constant.
This can be done without resorting to the additional regularisation $\alpha \partial_t \mu$, used in \cite{SS2021}, which, in general, has little physical meaning.
The crucial point is that our different reaction term, with respect to \cite{SS2021}, together with the additional assumption that $P$ is strictly positive, still allows us to prove some further regularity on $\mu$. For more details, see Theorems \ref{thm:weaksols}, \ref{thm:strongsols} and \ref{thm:contdep} below. 

The other main novelty, as well as the key objective of this paper, is the analysis of an optimal distributed control problem on a non-local tumour growth model, with different therapy techniques. Indeed, up to the author's knowledge, this is the first time that such a problem is considered for tumour growth phenomena in a non-local framework (see for instance \cite{FGS2020} for other optimal control problems in non-local phase-field systems related to different applications). 
While we can count many contributions in the local case (see for instance \cite{CGRS2017, CSS2021_secondorder, GLR2018}), in our setting we only have some applications to parameter identification (see \cite{RSS2021}) or inverse identification problems (see \cite{FLS2021}). 
In general, optimal control problems are being widely studied in applied mathematics related to bio-medical issues due to their applicability in predicting and directing possible effects of therapies, we cite for instance \cite{CGLMRR2021, PCLT2018}. 
In the present work, we consider the following optimal control problem:   

\bigskip
\noindent(CP) \textit{Minimise the cost functional}
\begin{equation}
	\begin{split}
		\mathcal{J}(\phi, \sigma, u, v) & = \, \frac{\alpha_{\Omega}}{2} \int_{\Omega} |\phi(T) - \phi_{\Omega}|^2 \,\de x + \frac{\alpha_Q}{2} \int_{0}^{T} \int_{\Omega} |\phi - \phi_Q|^2 \,\de x\,\de t \\
		& \quad + \frac{\beta_{\Omega}}{2} \int_{\Omega} |\sigma(T) - \sigma_{\Omega}|^2 \,\de x + \frac{\beta_Q}{2} \int_{0}^{T} \int_{\Omega} |\sigma - \sigma_Q|^2 \,\de x\,\de t \\ 
		& \quad + \frac{\alpha_u}{2} \int_{0}^{T} \int_{\Omega} |u|^2 \,\de x\,\de t + \frac{\beta_v}{2} \int_{0}^{T} \int_{\Omega} |v|^2 \,\de x\,\de t,
	\end{split}
\end{equation}
\textit{subject to the control constraints}
\begin{equation} 
	\begin{split}
	& u \in \mathcal{U}_{ad} := \{ u \in L^{\infty}(Q_T) \cap H^1(0,T;L^2(\Omega)) \mid u_{\text{min}} \le u \le u_{\text{max}} \text{ a.e. in } Q_T, \\ 
	& \qquad \qquad \qquad \norm{u}_{H^1(0,T;\Lx 2)} \le M \}, \\
	& v \in \mathcal{V}_{ad} := \{ v \in L^{\infty}(Q_T) \mid v_{\text{min}} \le v \le v_{\text{max}} \text{ a.e. in } Q_T \},
	\end{split}
\end{equation}
\textit{and to the state system \eqref{eq:phicost}-\eqref{iccost} with constant mobilities}.
\bigskip

\noindent Here $\alpha_\Omega, \alpha_Q, \beta_\Omega, \beta_Q, \alpha_u, \beta_v$ are non-negative parameters that can be used to select which targets have to be privileged. Some, but not all of them, can clearly be equal to zero, and their relative ratios determine which strategy is being pursued. 
The function $\phi_\Omega$ is a final target for the tumour distribution, for instance one that could be suitable for surgery or possibly one that, after some time, stabilises on a certain shape; whereas $\phi_Q$ is a desired evolution. 
In the same way, $\sigma_\Omega$ and $\sigma_Q$ are respectively a final target and a desired evolution for the nutrient. 
Finally, the last two terms in the cost functional penalise large use of radiotherapy or drugs, which could still harm the patient. 
The aim of the optimal control problem is, then, to find the best therapies $u$ and $v$, which can lead the evolution of the tumour to the desired targets.  
Indeed, this is a quite standard tracking-type cost functional, which mostly measures the $L^2$ distance between the current solutions and some fixed targets. 
Regarding (CP), in Theorem \ref{thm:excont} we show the existence of an optimal pair $(\ub, \vb)$. Then, through careful analysis of the control-to-state operator and the introduction of the adjoint system, in Theorem \ref{thm:optcond} we prove the first-order necessary optimality conditions, which have to be satisfied by the optimal pair. 
We would like to stress that proving global boundedness of the solution in the right function spaces, as well as the separation property for the tumour phase parameter, is of utmost importance when analysing the control-to-state operator. 
On this theme, let us point out that in Theorem \ref{thm:frechet} we prove the Fr\'echet differentiability of said operator. While G\^ateaux-differentiability would be sufficient to derive first-order necessary conditions, we choose to prove a stronger notion of differentiability since our regularity setting allows to get it with relative ease. The point is that the key step in getting higher regularity for the solution is proving the separation property, and, once one has that, all the rest follows practically for free. However, one cannot do without the separation property, even if resorting to G\^ateaux-differentiability, since for example it is needed for the first continuous dependence estimate, as well as the well-posedness of the linearised system (see Theorems \ref{thm:contdep} and \ref{thm:linearised}).
Finally, let us remark that one could go further with the study of the optimal control problem (CP) and try to prove also second-order sufficient conditions, as done for instance in \cite{CSS2021_secondorder}. While very interesting from an applicative point of view, this is presently out of the scope of this paper and is left to further investigations.

The plan of the paper is the following. 
In section \ref{sec:prelim}, we introduce the notation and the main hypotheses that we are going to use throughout the paper, and we also prove the existence of weak solutions to \eqref{eq:phi}--\eqref{ic}. 
In section \ref{sec:strongsols}, we focus on strong well-posedness of the system with constant mobilities, by proving regularity and continuous dependence results. 
In section \ref{sec:optcont}, we shift our attention to the optimal control problem, by first proving an existence result. 
Then, we introduce the linearised system and prove the Fréchet-differentiability of the control-to-state operator. 
Finally, after studying the adjoint system, we shall prove the main result of the paper, namely the first-order necessary optimality conditions for (CP).  

\section{Preliminaries and existence of weak solutions}
\label{sec:prelim}

We now introduce some notation that will be used throughout the paper. We denote with $\Omega \subset \R^N$, $N=2,3$ an open bounded domain with boundary $\partial \Omega$ of class $\mathcal{C}^2$, whereas $T \gs 0$ is a fixed final time. 
The $\mathcal{C}^2$ requirement for $\partial \Omega$ is needed for elliptic regularity estimates in Section \ref{sec:strongsols}, while for weak solutions one can just assume that $\partial \Omega$ is Lipschitz. 
For convenience, we also denote $Q_t = \Omega \times (0,t)$, for any $t \in (0,T]$.  

Next, we recall the usual conventions regarding the Hilbertian triplet used in this context. 
If we define
\[ H = L^2(\Omega), \quad V = H^1(\Omega), \quad W = \{ u \in H^2(\Omega) \mid \partial_{\n} u = 0 \text{ on } \partial \Omega \}, \]
then we have the continuous and dense embeddings:
\[ W \hookrightarrow V \hookrightarrow H \cong H^* \hookrightarrow V^* \hookrightarrow W^*. \]
We denote by $\duality{\cdot, \cdot}_V$ the duality pairing between $V^*$ and $V$ and by $(\cdot, \cdot)_H$ the scalar product in $H$.
Regarding Lebesgue and Sobolev spaces, we will use the notation $\norm{\cdot}_{p}$ for the $\Lx p$-norm and $\norm{\cdot}_{k,p}$ for the $\Wx {k,p}$-norm, with $k \in \N$ and $1 \le p \le \infty$. 
Moreover, we observe that, by elliptic regularity theory, an equivalent norm on $W$ is
\[ \norm{u}^2_W = \norm{u}^2_H + \norm{\Delta u}^2_H. \]
Finally, we recall the Riesz isomorphism $\mathcal{N}: V \to V^*$:
\[ \duality{\mathcal{N}u, v}_V := \int_{\Omega} \nabla u \cdot \nabla v + uv \, \de x \quad \forall u,v \in V. \]
It is well-known that for $u\in W$ we have $\mathcal{N}u = - \Delta u + u \in H$ and that the restriction of $\mathcal{N}$ to $W$ is an isomorphism from $W$ to $H$. 
Additionally, by the spectral theorem, there exists a sequence of eigenvalues $0 < \lambda_1 \le \lambda_2 \le \dots$, with $\lambda_j \to +\infty$, and a family of eigenfunctions $w_j \in W$ such that $\mathcal{N}w_j = \lambda_j w_j$, which forms an orthonormal basis in $H$ and an orthogonal basis in $V$. 
In particular, $w_1$ is constant.

Finally, we recall some useful inequalities that will be used throughout the paper: 
	\begin{itemize}
	\item \emph{Gagliardo-Nirenberg inequality}. Let $\Omega \subset \R^N$ bounded Lipschitz, $m\in\N$, $1 \le r,q \le \infty$, $j\in\N$ with $0\le j \le m$ and $j/m \le \alpha \le 1$ such that
	\[ \frac{1}{p} = \frac{j}{N} + \left( \frac{1}{r} - \frac{m}{N} \right) \alpha + \frac{1-\alpha}{q}, \]
	then \[ \norm{D^j f}_{L^p(\Omega)} \le C \, \norm{f}^\alpha_{W^{m,r}(\Omega)} \norm{f}^{1-\alpha}_{L^q(\Omega)}. \]
	In particular, we recall the following versions with $N=2,3$:
	\begin{equation}
	\label{gn:ineq}
		\begin{split}
		& \norm{f}_{\Lx4} \le C \norm{f}^{1/2}_{\Hx1} \norm{f}^{1/2}_{\Lx2} \quad \text{ if } N = 2, \\
		& \norm{f}_{\Lx3} \le C \norm{f}^{1/2}_{\Hx1} \norm{f}^{1/2}_{\Lx2} \quad \text{ if } N = 3.
		\end{split}
	\end{equation}
	\item \emph{Agmon's inequality}. Let $\Omega \subset \R^N$ bounded Lipschitz, $0 \le k_1 < N/2 < k_2$ and $0 < \alpha <1$ such that $N/2 = \alpha k_1 + (1-\alpha) k_2$, then
	\[ \norm{f}_{L^\infty(\Omega)} \le C \, \norm{f}^\alpha_{H^{k_1}(\Omega)} \norm{f}^{1-\alpha}_{H^{k_2}(\Omega)}. \]
	In particular, we recall the following versions with $N=2,3$:
	\begin{equation}
	\label{agmon}
		\begin{split}
		& \norm{f}_{\Lx\infty} \le C \norm{f}^{1/2}_{\Hx1} \norm{f}^{1/2}_{\Lx2} \quad \text{ if } N = 2, \\
		& \norm{f}_{\Lx\infty} \le C \norm{f}^{1/2}_{\Hx2} \norm{f}^{1/2}_{\Hx1} \quad \text{ if } N = 3.
		\end{split}
	\end{equation}
\end{itemize}
Note that all constants $C > 0$ mentioned above depend only on the measures of the sets and the parameters, not on the actual functions. 

Now we introduce the structural assumptions on the parameters of our model \eqref{eq:phi}--\eqref{ic}:
\begin{enumerate}[font = \bfseries, label = A\arabic*., ref = \bf{A\arabic*}]
	\item\label{ass:coeff} $A,B>0$, $\tau > 0$, $\chi \ge 0$.
	\item\label{ass:j} $J \in W^{1,1}_{\text{loc}}(\R^N)$ is a symmetric convolution kernel, namely $J(z) = J(-z)$ for any $z \in \R^N$. Moreover, we suppose that
	\[ a(x) := (J \ast 1)(x) = \int_{\Omega} J(x-y) \, \de y \ge 0 \quad \text{a.e. } x \in \Omega, \]
	and also that
	\[	a^\ast := \sup_{x\in\Omega} \, \int_{\Omega} \abs{J(x-y)} \, \de y < +\infty, \qquad
	b := \sup_{x\in\Omega} \, \int_{\Omega} \abs{\nabla J(x-y)} \, \de y < +\infty.  \]
	\item\label{ass:yosida} $F: \R \to \R \cup \{+\infty\}$ can be written as 
	\[ F = F_1 + F_2,  \]
	where $F_1$ is such that the effective domain of $F_1$ is $\dom F_1 = [-l,l]$, for some $l \in (0,+\infty]$, and it is extended by $+\infty$ outside of $[-l,l]$. Moreover $F_1 \in \mathcal{C}^2((-l,l)) \cap \mathcal{C}^0([-l,l])$ and it is convex, i.e. $F_1''(s) \ge 0$ for any $s \in (-l,l)$. \\ Conversely, $F_2 \in \mathcal{C}^2(\R)$, with $F_2': \R \to \R$ Lipschitz and $F_2'(0)=0$.
	\item\label{ass:fc0} There exists $c_0 > 0$ such that
	\[ A F''(s) + B a(x) \ge c_0 \quad \forall s \in (-l,l) \quad \text{a.e. } x \in \Omega. \] 
	\item\label{ass:p} $P \in \mathcal{C}^0(\R) \cap L^\infty(\R)$ and there exists $P_0 > 0$ such that 
		\[ P(s) \ge P_0 > 0 \quad \forall s \in \R. \]   
	Moreover, call $P_\infty := \norm{P}_{L^\infty(\R)}$.
	\item\label{ass:mn} $m,n \in \mathcal{C}^0(\R)$ and there exist $\underline{m}, \overline{m}, \underline{n}, \overline{n} \in \R_+$ such that: 
	\[ 0<\underline{m} \le m(s) \le \overline{m} < +\infty \quad \text{and} \quad 0<\underline{n} \le n(s) \le \overline{n} < +\infty \quad \forall s\in \R. \] 
	\item\label{ass:uv} $u \in L^2(0,T;H)$, $\hh \in \mathcal{C}^0 \cap L^\infty(\R)$ and $v \in L^2(0,T;V^*)$.	Moreover, call $\hh_\infty := \norm{\hh}_{L^\infty(\R)}$.
	\item\label{ass:initial} $\phi_0 \in V$ with $F(\phi_0) \in L^1(\Omega)$ and $\sigma_0 \in H$.
\end{enumerate}

\begin{remark}
	Regarding hypotheses \ref{ass:yosida} and \ref{ass:fc0} on the potential, we wish to point out the following aspects. Firstly, $l=1$ corresponds to the physical case and here $F$ can be taken as the logarithmic potential $F_{\text{sing}}$. Moreover, if $l=+\infty$, $F$ is actually a regular potential. 
	Due to the strong hypothesis \ref{ass:p} on the proliferation function $P$, in principle $F$ could have any type of growth at infinity. 
	However, by assuming polynomial growth for $F$ as in \cite{FLR2017}, one can relax the hypothesis on $P$.
	Indeed, $P$ can have polynomial growth up to a certain order and the hypothesis $P_0 \gs 0$ can be avoided. 
	This will be more clear in Remark \ref{regular:potential}. 
	Secondly, hypothesis \ref{ass:yosida} is needed to work with a Yosida approximation of the singular potential within the proof. 
	In particular, the logarithmic singular potential shown before, and also the polynomial double-well, satisfy this hypothesis. 
\end{remark}

\begin{remark}
	Concerning hypothesis \ref{ass:j}, instead, it is easily seen that Newton and Bessel type kernels satisfy these requirements (see \cite{BRB2011}). 
	Another class of kernels which satisfy \ref{ass:j} and \ref{ass:fc0} is for example the one used in  \cite{DST2021_1} for non-local-to-local asymptotics. 
	Without going into the details, they are a class of kernels $J_\eps$, depending on $\eps \gs 0$, for which $a^*_\eps$ and $b_\eps$ are finite for any fixed $\eps$, therefore $J_\eps \in W^{1,1}_{\text{loc}}(\R^N)$, but they blow up to $+\infty$ as $\eps \to 0$. 
	In this way, also \ref{ass:fc0} is satisfied if $\eps$ is small enough. 
\end{remark}

Finally, we would like to stress that, in the following, we will extensively use the symbol $C>0$ to denote positive constants, which may change from line to line. 
They will depend only on $\Omega$, $T$, the parameters and on the norms of the fixed functions introduced in hypotheses \ref{ass:coeff}--\ref{ass:initial} and possible subsequent ones. 
Sometimes, we will also add subscripts on $C$ to highlight some particular dependences of these constants. 
Above all, when possible, we will focus on the dependence on the parameter $\tau$, since one may be interested in sending it to $0$ in some future applications.

We can now start with the first result about existence of global weak solutions for our tumour growth system \eqref{eq:phi}--\eqref{ic}.

\begin{theorem}
	\label{thm:weaksols}
	Under assumptions \emph{\ref{ass:coeff}--\ref{ass:initial}}, there exists a weak solution $(\phi, \mu, \sigma)$ to \eqref{eq:phi}--\eqref{ic}, with 
	\begin{align*}
		& \phi \in H^1(0,T;H) \cap L^\infty(0,T;V), \\
		& \mu \in L^2(0,T;V), \\
		& \sigma \in H^1(0,T;V^*) \cap L^\infty(0,T,H) \cap L^2(0,T;V), 
	\end{align*}
	which satisfies 
	\[ \phi(0) = \phi_0 \quad \text{in } V \quad \text{and} \quad \sigma(0) = \sigma_0 \quad \text{in } H \]
	and the following variational formulation for a.e. $t \in (0,T)$ and for any $\zeta \in V$:
	\begin{align}
		& (\phi_t, \zeta)_H + (m(\phi) \nabla \mu, \nabla \zeta)_H = (P(\phi)(\sigma + \chi(1-\phi) - \mu), \zeta)_H - (\hh(\phi) u, \zeta)_H, \label{varform:phi} \\
		& (\mu,\zeta)_H = \tau (\phi_t, \zeta)_H + (AF'(\phi) + Ba \phi - BJ \ast \phi - \chi \sigma,\zeta)_H, \label{varform:mu} \\
		& \duality{\sigma_t,\zeta}_V + (n(\phi) \nabla (\sigma + \chi (1-\phi)), \nabla \zeta)_H = - (P(\phi)(\sigma + \chi(1-\phi) - \mu), \zeta)_H + \duality{v,\zeta}_V. \label{varform:sigma}
	\end{align}
	In particular, there exists a constant $C>0$, depending only on the parameters of the model and on the data $\phi_0$, $\sigma_0$ and $u,v$, such that: 
	\begin{equation}
		\label{weaknorms:est}
		\begin{split}
		& \tau \norm{\phi}_{H^1(0,T;H) \cap L^\infty(0,T,V)} + \norm{F(\phi)}_{L^\infty(0,T;L^1(\Omega))} + \norm{\mu}_{ L^2(0,T;V)} \\ 
		& \quad + \norm{\sigma}_{H^1(0,T;V^*) \cap L^\infty(0,T,H) \cap L^2(0,T;V)} \le C.
		\end{split}
	\end{equation}
\end{theorem}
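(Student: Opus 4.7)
The plan is to combine a Faedo--Galerkin approximation in space with a Moreau--Yosida regularisation of the convex singular part $F_1$ of the potential, derive a priori estimates uniform in both parameters, and pass to the limit by compactness and maximal-monotonicity. Concretely, I would work in the finite-dimensional subspaces $W_n = \mathrm{span}\{w_1,\dots,w_n\}$ spanned by the eigenfunctions of $\mathcal{N}$ and, for each $\lambda>0$, replace $F_1'$ by its $\lambda$-Yosida approximation $F_{1,\lambda}'$, which is globally Lipschitz and retains the monotonicity of $F_1'$. The resulting discrete-in-space problem reduces to an ODE system in the Fourier coefficients, to which Cauchy--Lipschitz applies on a maximal interval that will be extended to $[0,T]$ by the forthcoming a priori bounds.

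The core estimate is the natural energy identity, obtained by testing the discrete analogue of \eqref{eq:phi} with $\mu_{n,\lambda}$, that of \eqref{eq:mu} with $-\partial_t\phi_{n,\lambda}$, and that of \eqref{eq:sigma} with $\sigma_{n,\lambda}+\chi(1-\phi_{n,\lambda})$. Thanks to the symmetry of $J$,
\[
\int_\Omega (a\phi - J\ast\phi)\,\partial_t\phi\,\de x = \tfrac{1}{4}\ddt \int_\Omega\int_\Omega J(x-y)(\phi(x)-\phi(y))^2 \,\de x\,\de y,
\]
the mixed contribution $\int \mu\,\partial_t\phi$ cancels between the first two tests, and the reaction terms recombine into $-\int P(\phi)|\sigma+\chi(1-\phi)-\mu|^2$. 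The source term $\int\hh(\phi)u\mu$ is then split as $\int\hh(\phi)u(\mu-\sigma-\chi(1-\phi)) + \int\hh(\phi)u(\sigma+\chi(1-\phi))$, and the first piece is absorbed into the $P(\phi)|\cdots|^2$ dissipation thanks to $P\ge P_0>0$; the term $\duality*{v,\sigma+\chi(1-\phi)}$ is handled via the $\underline{n}\|\nabla(\sigma+\chi(1-\phi))\|_H^2$ coming from the diffusion. Using \ref{ass:fc0} together with Young's inequality to control the indefinite cross term $\chi\sigma(1-\phi)$ of $\mathcal{E}$, Gronwall's lemma delivers uniform bounds on $\phi_{n,\lambda}$ in $L^\infty(0,T;H)$, $F_\lambda(\phi_{n,\lambda})$ in $L^\infty(0,T;L^1(\Omega))$, $\tau^{1/2}\partial_t\phi_{n,\lambda}$ in $L^2(0,T;H)$, $\nabla\mu_{n,\lambda}$ in $L^2(0,T;H)$, $\sigma_{n,\lambda}$ in $L^\infty(0,T;H)\cap L^2(0,T;V)$, and on $P(\phi_{n,\lambda})^{1/2}(\sigma_{n,\lambda}+\chi(1-\phi_{n,\lambda})-\mu_{n,\lambda})$ in $L^2(0,T;H)$. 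The positivity $P\ge P_0>0$ then upgrades the last bound to $\mu_{n,\lambda}\in L^2(0,T;H)$ and, via Poincar\'e, to $L^2(0,T;V)$. To reach $\phi\in L^\infty(0,T;V)$, I would differentiate \eqref{eq:mu} in space, isolate $(AF_{\lambda}''(\phi)+Ba)\nabla\phi$, test by $\nabla\phi$ and exploit \ref{ass:fc0} together with the viscous contribution $\tfrac{\tau}{2}\ddt\|\nabla\phi\|_H^2$ in a further Gronwall argument; finally, $\partial_t\sigma\in L^2(0,T;V^*)$ follows by comparison in \eqref{eq:sigma}.

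The passage to the limit $n\to\infty$ proceeds along standard lines: weak and weak-$\ast$ compactness extract subsequences converging in the expected spaces, and Aubin--Lions upgrades $\phi_{n,\lambda}\to\phi_\lambda$ and $\sigma_{n,\lambda}\to\sigma_\lambda$ strongly in $L^2(0,T;H)$ (with $\phi_\lambda$ in $\mathcal{C}^0([0,T];H)$, so that the initial datum is attained), which suffices to pass to the limit in the continuous bounded non-linearities $m,n,P,\hh,F_{1,\lambda}'$ and, via Young's convolution inequality under \ref{ass:j}, in the non-local terms $a\phi_{n,\lambda}$ and $J\ast\phi_{n,\lambda}$. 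Since all the above bounds are uniform in $\lambda$ too, the same procedure applies to the subsequent limit $\lambda\to 0$, modulo the identification of $F_1'(\phi)$ from the weak limit of $F_{1,\lambda}'(\phi_\lambda)$.

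The main obstacle lies precisely in this last step when $l<+\infty$. The uniform $L^1$-bound on $F_\lambda(\phi_\lambda)$, together with the growth properties of the Moreau--Yosida approximation, forces $\phi\in[-l,l]$ a.e.\ in $Q_T$, so that $F_1'(\phi)$ is well-defined. The classical maximal-monotone-graph argument then identifies the limit: weak convergence $F_{1,\lambda}'(\phi_\lambda)\rightharpoonup \xi$ in $L^2(Q_T)$, strong convergence $\phi_\lambda\to\phi$ in $L^2(Q_T)$, and the $\limsup$ inequality $\limsup_\lambda \int_{Q_T} F_{1,\lambda}'(\phi_\lambda)\phi_\lambda \le \int_{Q_T}\xi\phi$ (obtained by testing the equation for $\mu_\lambda$ against $\phi_\lambda$ and recognising all the other limits) force $\xi = F_1'(\phi)$ a.e. A secondary but non-trivial point is the $L^2$-control of $\mu$ itself in the non-local setting: the mean of $\mu$ cannot be recovered from an elliptic inversion as in the local Cahn--Hilliard system, and it is exactly the strict positivity $P\ge P_0>0$ that supplies the missing ingredient through the $P(\phi)^{1/2}(\sigma+\chi(1-\phi)-\mu)$ dissipation.
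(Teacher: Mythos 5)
Your overall strategy coincides with the paper's: the same two-level approximation (Faedo--Galerkin in the eigenspaces of $\mathcal{N}$ plus Moreau--Yosida regularisation of $F_1$), the same three test functions $\mu$, $-\partial_t\phi$, $\sigma+\chi(1-\phi)$ for the energy identity, the same use of $P\ge P_0>0$ to extract the $L^2$-control of $\mu$ from the reaction dissipation (the paper pulls $\tfrac{P_0}{2}\norm{\mu}_H^2$ out of the dissipation and absorbs $(\hh(\phi)u,\mu)_H$ into it, which is equivalent to your splitting), the gradient estimate obtained from the $\mu$-equation tested by $-\Delta\phi$ using \ref{ass:fc0}, the comparison argument for $\sigma_t$, and the monotonicity argument to identify $F_1'(\phi)$ in the limit $\lambda\searrow 0$.

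There is, however, one genuine gap in the way you close the main energy estimate. After the energy identity, the lower bound for $E(t)$ leaves an uncompensated negative term of order $\norm{\phi(t)}_H^2$: even keeping $E_J(t)\ge 0$, Young's inequality applied to the chemotaxis cross term $\chi\sigma(1-\phi)$ produces $-C\chi^2\norm{\phi(t)}_H^2$, and neither \ref{ass:yosida} nor \ref{ass:fc0} gives coercivity of $F$ in $\norm{\phi}_H$ (\ref{ass:fc0} is a condition on $AF''+Ba$, and since $a$ may be as large as $a^*$ it allows $F''$ to be negative, so no quadratic lower bound on $F$ follows; moreover, at the Yosida-regularised level the constraint $\abs{\phi}\le l$ is not available even for a singular potential). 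Hence the Gronwall argument as you describe it does not control $\norm{\phi(t)}_H$, and with it the claimed $L^\infty(0,T;H)$ bounds on $\phi$ and $\sigma$ and the upgrade of the dissipation bound to $\mu\in L^2(0,T;H)$ do not follow. The paper's missing ingredient is an additional test of the $\phi$-equation by $M\phi$ with $M>Ba^*+2\chi^2$, which contributes $\tfrac{M}{2}\norm{\phi(t)}_H^2$ to the left-hand side and dominates the deficit; this in turn generates a $C\norm{\nabla\phi}_H^2$ term, which is compensated by the $-\Delta\phi$ test of the $\mu$-equation (via $c_0\norm{\nabla\phi}_H^2$ and $\tfrac{\tau}{2}\ddt\norm{\nabla\phi}_H^2$). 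You do have this last estimate, but you deploy it only afterwards to get $\phi\in L^\infty(0,T;V)$; it must be summed with the energy identity and the $M\phi$-test \emph{before} applying Gronwall, so that all three estimates close simultaneously. With that modification (or some equivalent device compensating the $\norm{\phi(t)}_H^2$ deficit) your argument matches the paper's proof.
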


\begin{remark}
		\label{regular:potential}
		In the case of a \emph{regular potential of polynomial growth}, one can still prove Theorem \ref{thm:weaksols} with less restricting hypotheses on the function $P$. Indeed, following \cite{FLR2017}, one can replace hypotheses \ref{ass:yosida}, \ref{ass:p} and \ref{ass:uv}, respectively, with the following: 
		\begin{enumerate}[font = \bfseries, label = A\arabic*'., ref = \bf{A\arabic*'}]
			\setcounter{enumi}{2} 
			\item\label{A3p} $F \in \mathcal{C}^2(\R)$ and there exist $c_1 \in \R$ and $c_2 > \frac{\chi^2}{A}$ such that 
			\[ F(s) \ge c_2 \abs{s}^2 - c_1 \quad \forall s \in \R. \]
			Moreover, there exist $c_3 >0$ and $c_4 \ge 0$ such that
			\[ \abs{F'(s)} \le c_3 F(s) + c_4 \quad \forall s \in \R. \]
			\addtocounter{enumi}{1}
			\item\label{A5p} $P \in \mathcal{C}^0(\R)$ and there exist $c_5 >0$ and $q \in [1,4]$ such that
			\[ 0 \le P(s) \le c_5 (1+\abs{s}^q) \quad \forall s \in \R. \]   
			\addtocounter{enumi}{1}
			\item\label{A7p} $u \in L^\infty(Q_T)$, $\hh \in \mathcal{C}^0 \cap L^\infty(\R)$ and $v \in L^2(0,T;V^*)$.	
		\end{enumerate} 
		In the following Remark \ref{regular:proof}, we provide some hints as to how to modify the proof in this case.
\end{remark}

\begin{remark}
	\label{strategy:wsols}
	Before starting the proof, we would like to briefly comment our strategy. As usual with Cahn-Hilliard type equations, the main issue is to recover in some way also an $L^2$ bound on $\mu$ from the standard energy estimate used in this context. This is generally done by means of an estimate on the mean value of $\mu$, which then allows the application of Poincaré-Wirtinger inequality. To do this, one either starts studying the evolution of the mean value of $\phi$, as in standard Cahn-Hilliard-Oono equations (see for instance \cite{DG2015, GGM2017, H2022}), or directly imposes some growth conditions on the potential $F$, as in \cite{FLR2017}. In our framework, both these possibilities have to be ruled out, since our reaction term is too complex to give some information on $\phi_\Omega(t)$ and we need uniform estimates independent on the regularisation of our possibly singular potential. Therefore, hypothesis \ref{ass:p} on the strict positivity of $P$, as well as its boundedness, comes into play, together with the structure of our reaction term. Both properties of the proliferation function are needed already in the first energy estimate. Clearly, this is an alternative way to the additional regularisation $\alpha \mu_t$ introduced in \cite{SS2021}.
\end{remark}

\begin{proof}[Proof of Theorem $\ref{thm:weaksols}$]
	For simplicity of exposition, we proceed with formal a priori estimates. 
	However, we would like to give some details on how to construct a nice approximation framework. 
	First, one needs to approximate the possibly singular potential with a one-parameter family of regular ones. 
	By hypothesis \ref{ass:yosida}, we can employ a Yosida approximation of $\partial F_1$, where with this symbol we denote the subdifferential in the sense of convex analysis, which coincides with $F_1'$ inside $(-l,l)$. 
	Moreover, we recall that we have extended $F_1$ to $+ \infty$ outside $[-l,l]$, so that it is proper, convex and lower-semicontinuous by hypothesis. 
	Then, we can approximate $\partial F_1$ with a family 
	\[ F'_{1,\lambda}: \R \to \R, \quad F'_{1,\lambda} := \frac{I - (I - \lambda \partial F_1)^{-1}}{\lambda}, \quad \lambda \gs 0,  \]
	where $I$ stands for the identity operator. 
	By the properties of the regularisation, we know that $F'_{1,\lambda}$ is $1/\lambda$-Lipschitz for any $\lambda \gs 0$. 
	Then, we employ the Moreau regularisation of $F_1$ by defining
	\[ F_{1,\lambda}: \R \to \R, \quad F_{1,\lambda}(s) := F_1(0) + \int_0^s F'_{1,\lambda}(r) \, \de r, \quad s \in \R. \]
	Finally, we set 
	\[ F_\lambda = F_{1,\lambda} + F_2. \]
	The second level of approximation is a Faedo-Galerkin discretisation scheme with discrete spaces $W_n$, generated by the eigenvectors of the operator $\mathcal{N}$.
	Note that, within this discretisation, all eigenfunctions satisfy homogeneous Neumann boundary conditions, and this turns useful when integration by parts is needed in the following estimates. 
	For more details about the fully approximated problem, we refer the reader to the proof of \cite[Theorem 2.1]{SS2021}. 
	Here, we assume to be working with both approximations, and we show uniform estimates on the variables, independent of the regularisation parameters. 
	For simplicity, however, we suppress the dependences on these additional parameters.

	For the main a priori energy estimate, we substitute $\zeta=\mu$ in \eqref{varform:phi}, $\zeta=-\phi_t$ in \eqref{varform:mu} and $\zeta=\sigma + \chi (1 - \phi)$ in \eqref{varform:sigma}. Note that all these substitutions are allowed in the discretisation framework. By adding the resulting identities together and using standard results on differentiation in Bochner spaces, we get: 
	\begin{equation}
		\begin{split}
			\label{wsols:est1}
			\ddt E(t) + \tau \norm{\phi_t}^2_H + \norm{\sqrt{m(\phi)} \nabla \mu }^2_H + \norm{ \sqrt{n(\phi)} \nabla (\sigma + \chi (1-\phi)) }^2_H \qquad \qquad \\
			+ \norm{ \sqrt{P(\phi)} (\sigma + \chi (1-\phi) - \mu))}^2_H = - (\hh(\phi) u, \mu)_H + \duality{v,\sigma + \chi (1-\phi)}_V, 
		\end{split}
	\end{equation}
	where
	\begin{equation*}
		\begin{split}
			E(t) & := \int_{\Omega} A F(\phi) + \frac{B}{2} a \phi^2 - \frac{B}{2} (J \ast \phi) \phi + \frac{\sigma^2}{2} + \chi \sigma (1-\phi) \, \de x  \\
			& := E_J(t) + \int_{\Omega} A F(\phi) + \frac{\sigma^2}{2} + \chi \sigma (1-\phi) \, \de x ,
		\end{split}
	\end{equation*}
	with 
	\[ E_J(t) := \int_{\Omega} \frac{B}{2} a \phi^2 - \frac{B}{2} (J \ast \phi) \phi \, \de x  = \int_{\Omega} \int_{\Omega} \frac{B}{4} J(x - y)(\phi(x) - \phi(y))^2 \, \de x  \, \de y. \]
	In particular, we used hypothesis \ref{ass:j} on the symmetry of $J$ to get the corresponding term in the expression of $E_J(t)$.
	Now, we can rewrite the term involving $P(\phi)$ and, by using \ref{ass:p}, Cauchy-Schwarz and Young's inequalities with carefully chosen constants, we can estimate it in the following way:
	\begin{align*}
		& \norm{ \sqrt{P(\phi)} (\sigma + \chi (1-\phi) - \mu))}^2_H  \\
		& \quad =  (P(\phi) \mu, \mu)_H + ( P(\phi) (\sigma + \chi (1-\phi)), \mu)_H + ( P(\phi) (\sigma + \chi (1-\phi) -\mu), \sigma + \chi(1 - \phi) )_H \\
		& \quad \ge P_0 \norm{\mu}^2_H 	- 2 P_\infty \norm{\sigma + \chi(1-\phi)}_H \norm{\mu}_H - P_\infty \norm{\sigma + \chi (1-\phi)}^2_H \\
		& \quad \ge \frac{P_0}{2} \norm{\mu}^2_H - C \norm{\phi}^2_H - C \norm{\sigma}^2_H.	
	\end{align*}
	Then, we can also treat the terms on the right-hand side of \eqref{wsols:est1} by means of Cauchy-Schwarz and Young's inequalities, together with hypothesis \ref{ass:uv}, indeed: 
	\begin{align*}
		& (\hh(\phi) u ,\mu)_H \le \hh_\infty \norm{u}_H \norm{\mu}_H \le \frac{P_0}{8} \norm{\mu}^2_H + C \norm{u}^2_H, \\
		& \duality{v, \sigma + \chi (1-\phi)}_V \le \norm{v}_{V^*} \norm{\sigma + \chi (1-\phi)}_V \le \frac{\underline{n}}{2} \norm{\sigma + \chi (1-\phi)}^2_V + C \norm{v}^2_{V^*}.
	\end{align*}
	Now, by using \ref{ass:j} and Cauchy-Schwarz and Young's inequalities, we can estimate the energy $E(t)$ from below as
	\begin{align*}
		E(t) & = A \int_\Omega F(\phi) \, \de x  + \frac{B}{2} \overbrace{ \norm{\sqrt{a} \phi}^2_H }^{\ge 0} + \frac{1}{2} \norm{\sigma}^2_H - \int_\Omega \frac{B}{2} (J \ast \phi) \phi \, \de x  + \int_\Omega \chi (1-\phi) \sigma \, \de x  \\
		& \ge A \int_\Omega F(\phi) \, \de x  + \frac{1}{2} \norm{\sigma}^2_H - \frac{B}{2} a^* \norm{\phi}_H^2 - \chi \norm{\sigma}_H \norm{1-\phi}_H \\
		& \ge A \int_\Omega F(\phi) \, \de x  + \frac{1}{4} \norm{\sigma}^2_H - \left( \frac{B}{2} a^* + \chi^2 \right) \norm{\phi}^2_H - C,  
	\end{align*}
	where $C$ depends only on the measure of $\Omega$ and $\chi$. In a similar way, one can also estimate the initial energy $E(0)$ from above as
	\[ E(0) \le A \norm{F(\phi_0)}_{1} + \left( B a^* + \frac{\chi^2}{2} \right) \norm{\phi_0}^2_H + \norm{\sigma_0}^2_H \le C < +\infty, \] 
	owing to assumption \ref{ass:initial}. 
	It may be useful to note that at the discretisation level, one has to work with $F_\lambda(\Pi_n \phi_0)$, where $\Pi_n \phi_0$ is the projection of $\phi_0$ onto the discrete spaces. 
	In this case, one has to use the fact that, by construction, $F_\lambda$ as at most quadratic growth and the hypothesis on $\phi_0 \in H$, together with the properties of projections. 
	Then, after passing to the limit in the Galerkin discretisation, one can also recover an upper bound independent of $\lambda$, by using the fact that $F_\lambda \le F$ by construction and the hypothesis $F(\phi_0) \in \Lx1$. 
	For more details, see \cite[Theorem 2.1]{SS2021}. From the previous inequality and by using \ref{ass:mn}, we get:
	\begin{equation}
	\label{wsols:est2}
		\begin{split}
			& \ddt \left( A \int_\Omega F(\phi) \, \de x  + \frac{1}{4} \norm{\sigma}^2_H - \left( \frac{B}{2} a^* + \chi^2 \right) \norm{\phi}^2_H \right) + \frac{P_0}{2} \norm{\mu}^2_H \\
			& \qquad + \underline{m} \norm{\nabla \mu}^2_H + \tau \norm{\phi_t}^2_H + \frac{\underline{n}}{2} \norm{\nabla (\sigma + \chi (1-\phi))}^2_H \\
			& \quad \le C \norm{\phi}^2_H + C \norm{\sigma}^2_H + C \norm{u}^2_H + C \norm{v}^2_{V^*} + C.
		\end{split}
	\end{equation}
	Now, to compensate the negative term $ - ( \frac{B}{2} a^* + \chi^2 ) \norm{\phi}^2_H $, we substitute $\zeta=\phi$ in \eqref{varform:phi} and we multiply both sides by a constant $M \gs 0$ such that 
	\[ M \gs B a^* + 2 \chi^2. \]
	Then, by using hypotheses \ref{ass:p}, \ref{ass:mn}, \ref{ass:uv} and Cauchy-Schwarz and Young's inequalities, we get:
	\begin{align}
	\label{wsols:est3}
		\frac{M}{2} \ddt \norm{\phi}^2_H & = - M (m(\phi) \nabla \mu, \nabla \phi)_H + M ( P(\phi)(\sigma + \chi (1-\phi)  -\mu), \phi )_H - M (\hh(\phi) u, \phi)_H \nonumber \\
		& \le \frac{\underline{m}}{4} \norm{\nabla \mu}^2_H + \frac{P_0}{8} \norm{\mu}^2_H + C \norm{\nabla \phi}^2_H + C \norm{\sigma}^2_H + C \norm{\phi}^2_H + C \norm{u}^2_H.
	\end{align}
	Finally, to compensate also the term $C \norm{\nabla \phi}^2_H$, we substitute $\zeta = - \Delta \phi$ in \eqref{eq:mu}, which is possible within Galerkin's discretisation, and, integrating by parts, without getting any extra boundary terms, due to the discrete spaces, we obtain:
	\begin{align*}
		(\nabla \mu, \nabla \phi)_H & = \tau (\nabla \phi_t, \nabla \phi)_H + ((AF''(\phi) + Ba)\nabla \phi, \nabla \phi)_H \\
		& \quad + (B\nabla a \,\phi, \nabla \phi)_H - (B \nabla J \ast \phi, \nabla \phi)_H - \chi (\nabla \sigma, \nabla \phi)_H.
	\end{align*}
	Next, we add and subtract $\chi^2 \norm{\nabla \phi}^2$ on the right-hand side, and by using \ref{ass:j}, \ref{ass:fc0}, Cauchy-Schwarz and Young's inequalities, we obtain that 
	\begin{equation}
	\label{wsols:est4}
		\begin{split}
		& c_0 \norm{\nabla \phi}_H^2 + \frac{\tau}{2} \ddt \norm{\nabla \phi}^2_H  \\
		% & \quad \le \norm{\nabla \mu}_H \norm{\nabla \phi}_H + 2Bb \norm{\phi}_H \norm{\nabla \phi}_H + \chi \norm{\nabla \phi}_H \norm{\nabla (\sigma - \chi \phi)}_H \\
		& \quad \le \frac{\underline{m}}{4} \norm{\nabla \mu}^2_H + \frac{\underline{n}}{4} \norm{\nabla (\sigma + \chi (1-\phi))}^2_H + C \norm{\nabla \phi}^2_H + C \norm{\phi}^2_H.  
		\end{split}
	\end{equation}
	At this point, we can add together \eqref{wsols:est2}, \eqref{wsols:est3} and \eqref{wsols:est4} and integrate on $(0,t)$, for any $t \in (0,T)$ to get: 
	\begin{align*}
		& \left( \frac{M}{2} - \frac{B}{2} a^* - \chi^2 \right) \norm{\phi(t)}^2_H + \frac{1}{4} \norm{\sigma(t)}^2_H + A \int_\Omega F(\phi(t)) \, \de x  + \frac{\tau}{2} \norm{\nabla \phi (t)}^2_H \\
		& \qquad + \tau \int_0^t \norm{\phi_t}^2_H \, \de s + \frac{P_0}{4} \int_0^t \norm{\mu}^2_H \, \de s + \frac{\underline{m}}{2} \int_0^t \norm{\nabla \mu}^2_H \, \de s + \frac{\underline{n}}{4} \int_0^t \norm{\nabla (\sigma + \chi (1-\phi))}^2_H \, \de s \\
		& \quad \le C E(0) + C \int_0^T \norm{\phi}^2_V \, \de s + C \int_0^T \norm{\sigma}^2_H \, \de s + C \int_0^T \norm{u}^2_H \, \de s + C \int_0^T \norm{v}^2_{V^*} \, \de s + CT.
	\end{align*}
	Now, we can apply Gronwall's inequality and infer that 
	\begin{equation}
		\label{wsols:est5}
		\begin{split}
			& \tau \norm{\phi}_{H^1(0,T;H) \cap L^\infty(0,T,V)} + \norm{F(\phi)}_{L^\infty(0,T;L^1(\Omega))} + \norm{\mu}_{ L^2(0,T;V)} \\ 
			& \quad + \norm{\sigma}_{L^\infty(0,T,H)} + \norm{\sigma + \chi (1-\phi)}_{L^2(0,T;V)} \le \bar{C},
		\end{split}
	\end{equation}
	where the constant $\bar{C}$ depends only on $\Omega$, $T$ and all the parameters introduced in hypotheses \ref{ass:coeff}--\ref{ass:initial}, but not on the Galerkin approximation parameter $n$. 
	At this stage, $\bar{C}$ can still depend on $\lambda$, but this dependence can be eliminated after passing to the limit as $n\to +\infty$, as explained before. 
	Moreover, it is straight-forwards to deduce that also 
	\[ \norm{\sigma}_{L^2(0,T;V)} \le \bar{C}. \]
	Finally, by comparison in \eqref{eq:sigma}, from \eqref{wsols:est5} it follows that 
	\[ \norm{\sigma}_{H^1(0,T;V^*)} \le \bar{C}. \]
	Note that, to be rigorous within Galerkin's approximation, one would have to test \eqref{eq:sigma} for any $\zeta \in W_n$ and use the orthogonality properties of the eigenvectors of $\mathcal{N}$, in order to recover the same estimate.
	
	Now, all that remains to do is to pass to the limit in the approximation framework. Indeed, by the uniform estimates found before, one can extract weakly and strongly convergent subsequences, that can be used to pass to the limit in the discretised variational form as $n \to +\infty$. Then, by weak lower-semicontinuity and the properties of the Yosida approximation, one can recover uniform estimates in $\lambda$ and then pass to the limit as $\lambda \searrow 0$. This is done very similarly to \cite[Theorem 2.1]{SS2021}, so we leave the details to the interested reader. 
	Finally, regarding initial data, we observe that $\phi \in \HT1H \cap \LT\infty V \hookrightarrow \Cw0V$  and $\sigma \in \HT1{V^*} \cap \LT2V \hookrightarrow \C0H$ by standard results, so they make sense respectively in $V$ and $H$.
	This concludes the proof of Theorem \ref{thm:weaksols}. 
\end{proof}
			
\begin{remark}
	In case of a singular potential, i.e. for $l \ls +\infty$, the uniform boundedness of $F(\phi)$ in $\LT \infty {\Lx1}$ readily implies that $\abs{\phi(x,t)} \le l$ for a.e. $(x,t) \in Q_T$, since $F$ is extended to $+\infty$ outside of $[-l,l]$. 
	Moreover, if one also assumes that 
	\[ \lim_{s \to (\pm l)^{\mp}} F'(s) = \pm \infty, \]
	which is the case for the logarithmic potential, one can also recover that $\abs{\phi(x,t)} \ls l$ for a.e. $(x,t) \in Q_T$. 
	Indeed, by comparison in \eqref{eq:mu}, since $\mu \in \LT 2 H$, also $F'(\phi) \in \LT 2 H$, which readily implies the thesis. 
	By improving this kind of hypothesis (see \ref{ass:flim}), in Section \ref{sec:strongsols} we will also be able to prove the so-called strict separation property for $\phi$.
\end{remark}

\begin{remark}
	\label{regular:proof}
	As anticipated before in Remark \ref{regular:potential}, we would like to give some hints regarding the differences in the proof when considering only a regular potential of polynomial growth, actually leaving most of the details to the interested reader. 
	
	The main energy estimate is still \eqref{wsols:est1}, however this time one can keep the reaction term $\norm{ \sqrt{P(\phi)} (\sigma + \chi (1-\phi) - \mu))}^2_H$ on the left-hand side and estimate differently the energy $E(t)$ and the term $(\hh(\phi) u, \mu)_H$. Indeed, by using H\"older and Young's inequalities and the new hypothesis \ref{A3p}, one has:  
	\begin{align*}
		E(t) & \ge E_J(t) + \frac{A}{2} \int_\Omega F(\phi) \, \de x  + \frac{A}{2} \int_\Omega F(\phi) \, \de x  + \frac{1}{2} \norm{\sigma}^2_H - \chi \norm{\sigma}_H \norm{1-\phi}_H \\
		& \ge E_J(t) + \frac{A}{2} \int_\Omega F(\phi) \, \de x  + \left( \frac{A}{2} c_2 - \frac{\chi}{4\alpha} \right) \norm{\phi}^2_H + \left( \frac{1}{2} - \chi \alpha \right) \norm{\sigma}^2_H - C,
	\end{align*}
	where this holds for any $\alpha > 0$. Now, if $\chi = 0$, one keeps the estimate as it is, otherwise if $\chi >0$, one can choose $\alpha = \frac{1}{\chi} \left( \frac{1}{2} - \delta \right)$ with $\delta \in \left( 0, \frac{1}{2} \right)$ and obtain: 
	\[ E(t) \ge E_J(t) + \frac{A}{2} \int_\Omega F(\phi) \, \de x   + \underbrace{ \left( \frac{A}{2} c_2  - \frac{\chi^2}{4(1/2 - \delta)} \right) }_{ := \gamma \gs 0} \norm{\phi}^2_H + \delta \norm{\sigma}^2_H - C. \]
	 Then, one can test \eqref{eq:mu} with $\zeta=1$, which is also possible within Galerkin's discretisation, since the first eigenfunction of $\mathcal{N}$ is constant, and integrate on $(0,t)$, for any $t\in (0,T)$. Consequently, by using \ref{ass:j}, \ref{A3p}, the continuous embedding of $L^2(\Omega)$ into $L^1(\Omega)$ and Young's inequality, one gets:
	\begin{align*}
		\int_0^t \int _\Omega \mu \, \de x  \, \de s & = \int_0^t \int_\Omega \tau \phi_t + F'(\phi) + Ba\phi - B J \ast \phi - \chi \sigma \, \de x  \, \de s \\
		& \le \int_0^t \left( \tau \norm{\phi_t}_{1} + \int_\Omega (c_3 F(\phi) + c_4) \, \de x  + 2Ba^* \norm{\phi}_{1} + \chi \norm{\sigma}_{1} \right) \, \de s \\
		& \le \eps \tau \int_0^t \norm{\phi_t}^2_H \, \de s + \int_0^t \left( c_3 \int_\Omega F(\phi) \, \de x  + C (\norm{\phi}^2_H + \norm{\sigma}^2_H) \right) \, \de s + C_{\eps}, 
	\end{align*}
	where $\eps >0$ is yet to be chosen. Therefore, by using also \ref{A7p}, one can estimate the source term as follows: 
	\begin{align*}
		\int_0^t (\hh(\phi) u, \mu)_H \, \de t & \le \hh_\infty \norm{u}_{L^\infty(Q_T)} \int_0^t \abs{ (\mu,1)_H } \, \de s \\
		& \le \int_0^t \frac{\tau}{2}  \norm{\phi_t}^2_H \, \de s + \int_0^t \left( c_3 \int_\Omega F(\phi) \, \de x  + C ( \norm{\phi}^2_H + \norm{\sigma}^2_H ) \right) \, \de s + C,
	\end{align*}
	by choosing $\eps = 1/(2 \, (\hh_\infty + \norm{u}_{L^\infty(Q_T)}))$ and renaming the constants. 
	Then, by integrating in time and using Gronwall's lemma, the following uniform estimate can be obtained:
	\begin{equation}
		\label{energy:est}
		\begin{split}
			& \tau \norm{\phi}_{H^1(0,T;H) \cap L^\infty (0,T;H)} + \norm{\sigma}_{L^\infty(0,T;H)} + \norm{\nabla \mu}_{L^2(0,T;H)} + \norm{E_J}_{L^\infty(0,T)} \\ 
			& \quad + \norm{\nabla(\sigma + \chi(1-\phi))}_{L^2(0,T;H)} + \norm{F(\phi)}_{L^\infty(0,T;L^1(\Omega))} \\ 
			& \quad + \norm{\sqrt{P(\phi)} (\sigma + \chi (1-\phi) - \mu)}_{L^2(0,T;H)} \le C. 
		\end{split}
	\end{equation}
	Moreover, observe that by testing again \eqref{eq:mu} with $\zeta=1$, one can see that
	\[ \frac{1}{\abs{\Omega}} \abs{\mu_\Omega} \le \tau \norm{\phi_t}_{1} + \norm{F(\phi)}_{1} + 2Ba^* \norm{\phi}_{1} + \chi \norm{\sigma}_{1} \in L^2(0,T), \]
	uniformly with respect to the parameters. Therefore, by using Poincaré-Wirtinger's inequality, it follows that also:
	\begin{equation*}
		\norm{\mu}_{L^2(0,T;V)} \le C.
	\end{equation*}
	Then, one tests \eqref{eq:mu} by $-\Delta \phi$ to recover the remaining $L^\infty(0,T;V)$ estimate on $\phi$ and proceeds by comparison for the $H^1(0,T;V^*)$ estimate for $\sigma$. Here we have the second main difference, which concerns the reaction term $R := P(\phi)(\sigma + \chi (1-\phi) -\mu)$. As a matter of fact, observe that formally, by using the embedding $L^{6/5}(\Omega) \hookrightarrow V^*$ and H\"older's inequality, one has:
	\begin{align*}
		\norm{R}_{V^*} & = \norm{P(\phi)(\sigma + \chi (1-\phi) -\mu)}_{V^*} \le C  \norm{P(\phi)(\sigma + \chi (1-\phi) -\mu)}_{6/5} \\
		& \le C \norm{\sqrt{P(\phi)}}_{3} \norm{\sqrt{P(\phi)}(\sigma + \chi (1-\phi) - \mu)}_H  \, \in L^2(0,T).
	\end{align*}	
	Indeed, one already knows that $\sqrt{P(\phi)}(\sigma + \chi (1-\phi) - \mu) \in L^2(0,T;H)$ by \eqref{energy:est} and, by using hypothesis \ref{A5p} with $q \le 4$, one can estimate:
	\begin{align*}
		\norm{\sqrt{P(\phi)}}_{3} & = \left( \int_\Omega P(\phi)^{3/2} \, \de x \right)^{1/3} \le \left( \int_\Omega (c_5(1+\abs{\phi}^q))^{3/2} \, \de x \right)^{1/3} \\
		& \le C + C \norm{\phi}^{q/2}_{3q/2}  \le C + C \norm{\phi}^2_{6} \, \in L^\infty(0,T),
	\end{align*}	
	thanks to the embedding $V \hookrightarrow L^6(\Omega)$. Hence, one obtains that
	\[ \norm{R}_{L^2(0,T;V^*)}  \le C, \]
	and by comparison in \eqref{eq:sigma} also:
	\[ \norm{\sigma_t}_{L^2(0,T;V^*)}  \le C.  \]
	Finally, one can similarly pass to the limit in the discretisation framework. For more details, we refer to \cite[Theorem 2.1]{FLR2017}, where the same system without viscosity is studied. This would conclude the proof of Theorem \ref{thm:weaksols}.
\end{remark}

\section{Strong well-posedness}
\label{sec:strongsols}

From now on, we consider the simplified version \eqref{eq:phicost}-\eqref{iccost} of our starting system, obtained by considering constant mobilities. Without loss of generality, we can fix the values of the constant mobilities as $ m = n = 1 $. Then, we have the following system: 
\begin{alignat}{2}
	& \partial_t \phi = \Delta \mu + P(\phi) (\sigma + \chi (1-\phi) - \mu) - \hh(\phi) u \qquad && \text{in } Q_T,  \label{eq:phi2}\\
	& \mu = \tau \partial_t \phi + AF'(\phi) + Ba \phi - BJ \ast \phi - \chi \sigma \qquad && \text{in } Q_T,  \label{eq:mu2} \\
	& \partial_t \sigma = \Delta \sigma - \chi \Delta \phi - P(\phi) (\sigma + \chi (1-\phi) - \mu) + v \qquad && \text{in } Q_T, \label{eq:sigma2}
\end{alignat}
paired with boundary and initial conditions:
\begin{alignat}{2}
	& \partial_{\n} \mu = \partial_{\n} (\sigma - \chi \phi) = 0 \qquad && \text{on } \partial \Omega \times (0,T), \label{bc2} \\
	& \phi(0) = \phi_0, \quad \sigma(0) = \sigma_0 \qquad && \text{in } \Omega. \label{ic2}
\end{alignat}
Moreover, we also assume stronger hypotheses on the parameters and on the data. 
First let us observe that, to gain further regularity, we would need to assume that $J \in  W^{2,1}_{\text{loc}}(\R^N)$, but this hypothesis is incompatible with widely used convolution kernels, such as those of Newton or Bessel type. 
However, following  \cite[Definition 1]{BRB2011}, we can still introduce a suitable class of kernels, which includes the ones mentioned before and satisfies our needs. 
Indeed, we recall the following definition:
\begin{definition}
	\label{def:admissible}
	A convolution kernel $J \in W^{1,1}_{\text{loc}}(\R^N)$ is \emph{admissible} if it satisfies the following conditions: 
	\begin{itemize}
		\item $J \in \mathcal{C}^3(\R^N \setminus \{0\})$.
		\item $J$ is radially symmetric and non-increasing, i.e. $J(\cdot) = \tilde{J}(\abs{\cdot})$ for a non-increasing function $\tilde{J} : \R_+ \to \R$.
		\item There exists $R_0$ such that $r \mapsto \tilde{J}''(r)$ and $r \mapsto \tilde{J}'(r)/r$ are monotone on $(0,R_0)$.
		\item There exists $C_N>0$ such that $\abs{D^3 J(x)} \le C_N \abs{x}^{-N-1}$ for any $x \in \R^3 \setminus \{0\}$.
	\end{itemize}
\end{definition}

We also need stronger hypotheses on $F$ and on the initial data to guarantee the strict separation property. Indeed, we assume the following:

\begin{enumerate}[font = \bfseries, label = B\arabic*., ref = \bf{B\arabic*}]
	\item\label{ass:j2} $J \in W^{2,1}_{\text{loc}}(\R^N)$ or $J$ is \emph{admissible} in the sense of Definition \ref{def:admissible}.
	\item\label{ass:flim} $F \in \mathcal{C}^4((-l,l))$ and the constant $c_0$ of hypothesis \ref{ass:fc0} is such that $c_0 > \chi^2 \ge 0$. Moreover, we assume that
	\[ \lim_{s \to (\pm l)^{\mp}} (AF'(s) - \chi^2 s) = \pm \infty. \]
	\item\label{ass:ph2} $P, \hh \in \mathcal{C}^1 \cap W^{1,\infty}(\R)$ and there exists $P_0 > 0$ such that
	\[ P(s) \ge P_0 > 0 \quad \forall s\in\R. \]
	Moreover, call $P'_\infty = \norm{P'}_{L^\infty(\R)}$ and $\hh'_\infty = \norm{\hh'}_{L^\infty(\R)}$.
	\item\label{ass:uv2} $u \in L^\infty(Q_T) \cap H^1(0,T;H)$ and $v \in L^\infty(Q_T)$.
	\item\label{ass:initial2} $\sigma_0 \in V \cap L^\infty(\Omega)$, whereas $\phi_0 \in H^2(\Omega)$ and it is \emph{separated}, i.e.~there exists $s_0 \in (0,l)$ such that 
	\[ \norm{\phi_0}_{\Lx\infty} \le s_0. \]
\end{enumerate}

\begin{remark}
	\label{admissible}
	We recall that if $J$ satisfies \ref{ass:j2}, then, by \cite[Lemma 2]{BRB2011}, for any $p \in (1,+\infty)$ there exists a constant $b_p>0$ such that:
	\[ \norm{\nabla (\nabla J \ast \psi)}_{L^p(\Omega)^{3\times 3}} \le b_p \norm{\psi}_{L^p(\Omega)} \quad \forall \psi \in L^p(\Omega). \]
	This is just what one needs to control second-order derivatives of the convolution term $J \ast \phi$.
\end{remark}

\begin{remark}
	Assumption \ref{ass:flim} is needed to have some coercivity, due to the presence of chemotaxis, and a control on how $F'$ blows up at the extrema, in order to prove the separation property.
	Moreover, on hypothesis \ref{ass:initial2}, note that $\phi_0$ being separated does not take away from practical situations. Indeed, if $F$ is regular, it simply means that $\phi_0 \in \Lx \infty$. Whereas if $F$ is singular, we recall that the actual minima of the logarithmic potential are not exactly in $\pm 1$, but in two intermediate values close to them (i.e.~the actual pure phases in this setting). Therefore, we are just asking that $F'(\phi_0)$ does not explode, in a uniform way.  
\end{remark}

\begin{remark}
	\label{data:reg}
	We also observe that, since $\phi_0 \in H^2(\Omega)$ and it is separated, one can freely differentiate the potential $F'(\phi_0)$. Therefore, by comparison in the variational formulations of \eqref{eq:phi2} and \eqref{eq:mu2} at time $0$, we also have that:
	\[ \mu(0) \in W  \quad \text{and} \quad \phi_t(0) \in H. \] 
	Indeed, by considering \eqref{varform:phi} and \eqref{varform:mu} at time $t=0$, we have that for any $\zeta \in V$
	\begin{align*}
		& (\phi_t(0), \zeta)_H + ( \nabla \mu(0), \nabla \zeta)_H = (P(\phi_0)(\sigma_0 + \chi(1-\phi_0) - \mu(0)), \zeta)_H - (\hh(\phi_0) u(0), \zeta)_H,  \\
		& (\mu(0),\zeta)_H = \tau (\phi_t(0), \zeta)_H + (AF'(\phi_0) + Ba \phi_0 - BJ \ast \phi_0 - \chi \sigma_0,\zeta)_H.
	\end{align*}
	Then, by substituting the first one into the second one and isolating the terms containing $\mu(0)$, we infer that $\mu(0)$ satisfies the following variational problem:
	\[ \tau (\nabla \mu(0), \nabla \zeta)_H + ((1+\tau P(\phi_0)) \mu(0), \zeta)_H = (f,\zeta)_H \quad \text{for any } \zeta \in V, \]
	where 
	\[ f := \tau P(\phi_0)(\sigma_0 + \chi(1-\phi_0)) - \tau \hh(\phi_0) u(0) + AF'(\phi_0) + Ba\phi_0 - B J \ast \phi_0 - \chi \sigma_0 \in H. \]
	Then, $\mu(0)$ is a weak solution of the following elliptic partial differential equation:
	\begin{alignat}{2}
		 - \tau \Delta \mu(0) + (1 + \tau P(\phi_0)) \mu(0) & = f \qquad && \hbox{in $\Omega$,} \\
		 \partial_\n \mu(0) & = 0 \qquad && \hbox{on $\partial \Omega$,}
	\end{alignat}
	where the homogeneous Neumann boundary condition is a consequence of the test functions $\zeta$ being in $V = \Hx 1$. Therefore, since, by \ref{ass:initial2}, \ref{ass:p} and Sobolev embeddings, we have that $f \in H$, $(1 + \tau P(\phi_0)) \in \Cx 0$ and it is non-negative, by standard elliptic regularity theory we can infer that $\mu(0) \in W$. Then, by comparison, we also have that $ \phi_t(0) \in H$. 
\end{remark}

We have the following result about existence of strong solutions:

\begin{theorem}
	\label{thm:strongsols}
	Under assumptions \emph{\ref{ass:coeff}--\ref{ass:fc0}} and \emph{\ref{ass:j2}--\ref{ass:initial2}}, there exists a strong solution to \eqref{eq:phi2}--\eqref{ic2}, with the following regularity:
	\begin{align*}
		& \phi \in W^{1,\infty}(0,T;H) \cap H^1(0,T;H^2(\Omega)) \cap L^\infty(0,T;H^2(\Omega)), \\
		& \mu \in L^\infty(0,T;W), \\
		& \sigma \in H^1(0,T;H) \cap L^\infty(0,T,V) \cap L^2(0,T;H^2(\Omega)). 
	\end{align*}	
	In particular, there exists a constant $C>0$, depending only on the parameters of the model and on the data $\phi_0$, $\sigma_0$ and $u,v$, such that: 
	\begin{equation}
		\begin{split}
		\label{strongnorms:est}
		 & \tau \norm{\phi}_{W^{1,\infty}(0,T;H) \cap H^1(0,T;H^2(\Omega)) \cap L^\infty(0,T;H^2(\Omega))} + \norm{\mu}_{L^\infty(0,T;W)} \\ 
		 & \quad + \norm{\sigma}_{H^1(0,T;H) \cap L^\infty(0,T,V) \cap L^2(0,T;H^2(\Omega))} \le C. 
		 \end{split}
	\end{equation}
	Moreover, $\phi$ satisfies the strict separation property, i.e. there exists $s^* \in (s_0, l)$ such that 
	\begin{equation}
		\label{separation}
		\sup_{t \in [0,T]} \norm{\phi(t)}_{\Lx\infty} \le s^*. 
	\end{equation}
\end{theorem}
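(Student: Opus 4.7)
The plan is to derive the estimate \eqref{strongnorms:est} together with the separation property \eqref{separation} on the Galerkin--Yosida approximation scheme used in the proof of Theorem \ref{thm:weaksols}, uniformly in both the Galerkin level $n$ and the Yosida parameter $\lambda$, and then to pass to the limit. After an additional mollification if needed, $F_\lambda$ is smooth enough so that all the pointwise computations through $F'_\lambda$, $F''_\lambda$ and $F'''_\lambda$ below are fully justified on the approximated system.

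The core step is the uniform-in-$\lambda$ strict separation property \eqref{separation}. Since $\tau > 0$, equation \eqref{eq:mu2} can be read pointwise in $x$ as a first-order nonlinear ODE in $t$ for $\phi_\lambda$,
\[
\tau \partial_t \phi_\lambda + A F'_\lambda(\phi_\lambda) + B a(x)\phi_\lambda = \mu_\lambda + B J * \phi_\lambda + \chi \sigma_\lambda,
\]
whose left-hand-side nonlinearity has derivative $\ge c_0 > \chi^2$ by \ref{ass:fc0} and the first part of \ref{ass:flim}, and diverges at $\pm l$ by the second part of \ref{ass:flim} (the divergence being inherited by $F'_\lambda$ in the sub-interval of interest, via uniform convergence of $F'_\lambda \to F'$ on compact subsets of $(-l,l)$). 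A scalar comparison with the associated ODE will then force $\phi_\lambda$ to remain in a sub-interval $[-s^*, s^*] \subset (-l, l)$ independent of $\lambda$, provided $\mu_\lambda$, $\sigma_\lambda$ and $J * \phi_\lambda$ are uniformly bounded in $L^\infty(Q_T)$. The $L^\infty$ bound on $\sigma_\lambda$ is obtained by an Alikakos--Moser iteration on \eqref{eq:sigma2}, exploiting $v \in L^\infty(Q_T)$, $\sigma_0 \in L^\infty(\Omega)$ and the good sign of the reaction; the bound on $\mu_\lambda$ follows either by testing \eqref{eq:mu2} with the truncations $(\phi_\lambda - s)_+^{p-1}$ for $p \to \infty$, absorbing the $F'_\lambda$ contribution by monotonicity, or by a parallel iteration on the reaction-diffusion structure of \eqref{eq:phi2}.

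Once the separation is in place, \ref{ass:flim} yields uniform $L^\infty(Q_T)$ bounds on $F'_\lambda(\phi_\lambda), F''_\lambda(\phi_\lambda), F'''_\lambda(\phi_\lambda)$, and the remaining estimates can be closed in one Gronwall argument coupling the following sub-steps: (i) test \eqref{eq:sigma2} by $-\Delta \sigma$ and $\sigma_t$, aiming at $\sigma \in L^\infty(0,T;V) \cap H^1(0,T;H) \cap L^2(0,T;H^2(\Omega))$, but leaving a cross term involving $\Delta \phi$; (ii) apply $-\Delta$ to \eqref{eq:mu2} and test by $\phi_t$, using the chain rule through $F'$ together with Remark \ref{admissible} to control the second-order derivatives of $J * \phi$, thereby yielding $\phi \in L^\infty(0,T;H^2(\Omega))$ and compensating the cross term from (i); (iii) read $\Delta \mu = \phi_t - P(\phi)(\sigma + \chi(1-\phi) - \mu) + \hh(\phi) u$ from \eqref{eq:phi2}, whose right-hand side then lies in $L^\infty(0,T;H)$, so that elliptic regularity with the Neumann condition \eqref{bc2} gives $\mu \in L^\infty(0,T;W)$; (iv) differentiate \eqref{eq:phi2} and \eqref{eq:mu2} in time, test by $\mu_t$ and $-\phi_{tt}$ respectively and add so that the cross term $(\mu_t, \phi_{tt})_H$ cancels, using $u \in H^1(0,T;H)$ from \ref{ass:uv2} and the initial-time regularity $\phi_t(0) \in H$, $\mu(0) \in W$ from Remark \ref{data:reg} to produce $\phi_t \in L^\infty(0,T;H) \cap L^2(0,T;V)$; comparison in \eqref{eq:mu2} then upgrades this to $\phi \in H^1(0,T;H^2(\Omega))$ using the $L^2(0,T;H^2)$ estimates on $\mu$ and $\sigma$ already obtained.

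The hard part will be the separation step, because $\mu$ and $\sigma$ are linked through the reaction term $P(\phi)(\sigma + \chi(1-\phi) - \mu)$ and cannot be $L^\infty$-bounded independently of $\phi$; the iterations on $\sigma$, $\mu$ and the pointwise comparison for $\phi$ must therefore be run together. The viscosity $\tau \partial_t \phi$ in \eqref{eq:mu2} is precisely what unlocks this: it converts what would otherwise be an algebraic relation for $\mu$ into a genuine evolution for $\phi$ whose nonlinear part is strictly coercive, and this is the essential simplification over the non-viscous case $\tau = 0$. After the uniform-in-$(n,\lambda)$ estimates are in place, passage to the limits $n \to \infty$ and $\lambda \searrow 0$ proceeds by weak/strong compactness and lower-semicontinuity arguments as in \cite{SS2021}, with \eqref{separation} preserved because all constants are $\lambda$-independent.
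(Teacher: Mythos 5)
There is a genuine gap in the order of your argument: you make the strict separation property the \emph{first} step, and its prerequisite is a uniform $L^\infty(Q_T)$ bound on $\mu_\lambda$ and $\sigma_\lambda$ \emph{before} any strong estimate is available. At that stage the only information is that of Theorem \ref{thm:weaksols} (plus, by comparison, $\mu\in L^2(0,T;W)$), and neither of the two bounds you invoke can be produced from it. For $\sigma$, an Alikakos--Moser iteration on \eqref{eq:sigma2} needs the forcing terms $-\chi\Delta\phi$ and $P(\phi)\mu$ in $L^q(Q_T)$ with $q>(N+2)/2$; but $\Delta\phi$ is not even in $L^2(Q_T)$ at this point (only $\phi\in L^\infty(0,T;V)$), and $\mu$ is only $L^2$ in time, so the iteration (or, for $\sigma-\chi\phi$, parabolic $L^\infty$ regularity) does not close. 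For $\mu$, eliminating $\partial_t\phi$ via \eqref{eq:mu2} turns \eqref{eq:phi2} into an elliptic equation for $\mu(t)$ whose right-hand side contains $A F'(\phi)/\tau$; any $L^\infty$ (or even good $L^p$) bound for $\mu$ therefore already presupposes control of $F'(\phi)$, i.e.\ the separation property itself, and testing \eqref{eq:mu2} by truncations of $\phi$ only exploits monotonicity of $F'$ \emph{given} an $L^\infty$ bound on the remaining terms --- which is exactly what is missing. You acknowledge this circularity (``the iterations \ldots must be run together'') but do not break it, and that is the crux of the theorem, not a detail to be deferred.

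The way out, which your plan does not contain, is to obtain the strong bounds on $\mu$ and $\phi_t$ \emph{before} the separation property, by an energy estimate in which the singular potential never needs to be bounded from above: test \eqref{eq:phi2} by $\partial_t\mu$ and the time derivative of \eqref{eq:mu2} by $-\partial_t\phi$ and add, so that the potential enters only through $((AF''(\phi)+Ba)\phi_t,\phi_t)_H\ge c_0\|\phi_t\|_H^2$ (hypothesis \ref{ass:fc0}, a sign condition, not a bound), while the reaction term is integrated by parts in time and the coercivity $P\ge P_0>0$ of \ref{ass:ph2} recovers $\|\mu\|_H^2$; this yields $\phi\in W^{1,\infty}(0,T;H)$ and $\mu\in L^\infty(0,T;V)$, hence by comparison $\mu\in L^\infty(0,T;W)\hookrightarrow L^\infty(Q_T)$ (see \eqref{phi:w1infh}--\eqref{mu:linfw}). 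Only then does parabolic regularity for $\sigma-\chi\phi$ (right-hand side now in $L^\infty(0,T;H)$) give the $L^\infty(Q_T)$ bound \eqref{S:linfqt} on the source $S=\mu+BJ\ast\phi+\chi(\sigma-\chi\phi)$, after which your pointwise comparison idea for $\tau\partial_t\phi+AF'(\phi)-\chi^2\phi+Ba\phi=S$ is essentially the paper's argument (carried out rigorously by testing with $(\phi-s^*)_+$ and using \ref{ass:flim}). Your post-separation steps are broadly plausible, though note that the $L^\infty(0,T;H^2)$ bound on $\phi$ also requires an intermediate $L^\infty(0,T;W^{1,6})$ estimate (obtained by testing the gradient of the equation with $\nabla\phi|\nabla\phi|^{p-2}$) to handle the $F'''(\phi)\nabla\phi\otimes\nabla\phi$ terms, and that the $W^{1,\infty}(0,T;H)$ bound is already a by-product of the estimate above, so no second time differentiation involving $\phi_{tt}$ is needed.
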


\begin{proof}
	We proceed with only formal estimates for the sake of exposition. These can be made rigorous by going back to the Galerkin discretisation and by using finite differences operators for time derivatives of higher order. 
	
	Before starting, we observe that, from the weak regularities of Theorem \ref{thm:weaksols}, we can immediately have more regularity on some terms, without extra assumptions. In particular, regarding the reaction term, we can say that
	\[ \norm{R}_{L^2(0,T;L^6(\Omega))} \le C,  \]
	since $P \in L^\infty$ and $\sigma + \chi (1-\phi) - \mu \in L^2(0,T;V) \hookrightarrow L^2(0,T;L^6(\Omega))$. In turn, this implies, by comparison in \eqref{eq:phi2}, that
	\[ \Delta \mu = \phi_t - R + \hh(\phi) u \in L^2(0,T;H) , \]
	which means that we also have a uniform bound on $\mu$ in $L^2(0,T;W)$, i.e.
	\begin{equation}
		\label{mu:l2w}
		\norm{\mu}_{L^2(0,T;W)} \le C.
	\end{equation}
	
	For the first estimate, we test equation \eqref{eq:sigma2} by $\partial_t (\sigma - \chi \phi)$ and, by using Cauchy-Schwarz and Young's inequalities, we get:
	\begin{align*}
		& \norm{\sigma_t}^2_H + \frac{1}{2} \ddt \norm{\nabla (\sigma - \chi \phi)}^2_H \\
		& \quad = \chi (\sigma_t, \phi_t)_H + (P(\phi) (\sigma + \chi (1-\phi) - \mu) , \sigma_t - \chi \phi_t)_H + (v, \sigma_t - \chi \phi_t)_H \\
		& \quad \le \frac{1}{2} \norm{\sigma_t}^2_H + C \norm{\phi_t}^2_H + C \norm{\sigma + \chi (1-\phi) -\mu}^2_H + C \norm{v}^2_H. 
	\end{align*}
	Then, by integrating on $(0,t)$, for any $t\in (0,T)$, and by using Gronwall's lemma, hypothesis \ref{ass:initial2} and \eqref{weaknorms:est}, we infer that
	\begin{equation}
		\label{sigma:h1h}
		\norm{\sigma}_{H^1(0,T;H)} + \norm{\sigma - \chi \phi}_{L^\infty(0,T;V)} \le C.
	\end{equation}
	Also, since $\phi \in L^\infty(0,T;V)$ by \eqref{weaknorms:est}, we can conclude that
		\begin{equation}
		\label{sigma:linfv}
		\norm{\sigma}_{L^\infty(0,T;V)} \le C.
	\end{equation}
	Next, we test again \eqref{eq:sigma2} by $- \Delta (\sigma - \chi \phi)$ and with analogous estimates we get:
	\begin{equation*}
		\norm{\Delta (\sigma - \chi \phi)}^2_H \le \frac{1}{2} \norm{\Delta(\sigma-\chi\phi)}^2_H + C \norm{\sigma_t}^2_H + C \norm{\sigma + \chi (1-\phi) -\mu}^2_H + C \norm{v}^2_H.
	\end{equation*}
	Then, again by integrating on $(0,T)$ and using \eqref{weaknorms:est} and \eqref{sigma:h1h}, we infer that
	\begin{equation}
		\label{sigmaphi:l2w}
		\norm{\sigma - \chi \phi}_{L^2(0,T;W)} \le C.
	\end{equation}
	Now, for the main estimate, we test \eqref{eq:phi2} by $\partial_t \mu$ and the time-derivative of \eqref{eq:mu2} by $- \partial_t \phi$ and sum them up. To be precise, these time-derivatives should be done by using the approximation $\partial_t f \simeq (T_h(f) - f)/h$, where $T_h(f) = f(\cdot + h)$ for $h \in \R$, and then by sending $h\to 0$. However, to give the idea of the procedure, we stick to formal estimates. Indeed, after cancellations, we obtain:
	\begin{align*}
		& \frac{1}{2} \ddt \norm{\nabla \mu}^2_H + \frac{\tau}{2} \ddt \norm{\phi_t}^2_H + ((AF''(\phi) + Ba) \phi_t, \phi_t)_H \\
		& \quad = (P(\phi) (\sigma + \chi (1-\phi) - \mu), \mu_t)_H - (\hh(\phi)u, \mu_t)_H + B(J \ast \phi_t, \phi_t)_H + \chi (\sigma_t, \phi_t)_H. 
	\end{align*}
	By using assumptions \ref{ass:j}, \ref{ass:fc0} and Cauchy-Schwarz and Young's inequalities, we infer that
	\begin{align*}
		& \frac{1}{2} \ddt \norm{\nabla \mu}^2_H + \frac{\tau}{2} \ddt \norm{\phi_t}^2_H \\
		& \quad \le C \norm{\phi_t}^2_H + C \norm{\sigma_t}^2_H +  \underbrace{(P(\phi) (\sigma + \chi (1-\phi) - \mu), \mu_t)_H}_{:= I_1} - \underbrace{(\hh(\phi)u, \mu_t)_H}_{:= I_2}. 
	\end{align*}
	Now, in order to estimate the term $I_1$, we use Leibniz's rule in time, generalised H\"older's inequality, Young's inequality, hypotheses \ref{ass:ph2}, \ref{ass:uv2} and the embeddings $V \hookrightarrow L^6(\Omega)$ and $W \hookrightarrow L^\infty(\Omega)$, as follows:
	\begin{align*}
		I_1 & = \ddt (P(\phi) (\sigma + \chi (1-\phi)), \mu)_H 
		- (P'(\phi) \phi_t (\sigma + \chi (1-\phi)), \mu)_H 
		- (P(\phi) (\sigma_t - \chi \phi_t ), \mu )_H \\
		& \quad - \frac{1}{2} \ddt (P(\phi) \mu, \mu)_H 
		+ (P'(\phi) \phi_t \, \mu, \mu)_H 	\\	
		& \le - \ddt \left[ \frac{1}{2} (P(\phi)\mu, \mu)_H 
		- (P(\phi) (\sigma + \chi (1-\phi)), \mu)_H  \right] 
		+ P'_\infty \norm{\phi_t}_H \norm{\sigma + \chi (1-\phi)}_{4} \norm{\mu}_{4} \\
		& \quad + P_\infty \norm{\sigma_t - \chi \phi_t}_H \norm{\mu}_H + P'_\infty \norm{\phi_t}_H \norm{\mu}_{\infty} \norm{\mu}_H  \\
		& \le - \ddt \left[ \frac{1}{2} (P(\phi)\mu, \mu)_H - (P(\phi) (\sigma + \chi (1-\phi)), \mu)_H  \right] + C \norm{\mu}^2_V  \\
		& \quad + C \underbrace{\norm{\sigma + \chi (1-\phi)}^2_V}_{\in \, L^\infty(0,T)} \norm{\phi_t}^2_H 
		+ C \norm{\sigma_t - \chi \phi_t}^2_H 
		+ C \underbrace{\norm{\mu}^2_{\infty}}_{\in \, L^1(0,T)} \norm{\phi_t}^2_H.
	\end{align*}
	In a similar way, we also estimate the term $I_2$:
	\begin{align*}
		I_2 & = \ddt (\hh(\phi)u, \mu)_H - (\hh'(\phi) \phi_t \, u, \mu)_H - (\hh(\phi)u_t, \mu)_H \\
		& \le \ddt (\hh(\phi)u, \mu)_H + \hh'_\infty \norm{\phi_t}_H \norm{u}_{\infty} \norm{\mu}_H + \hh_\infty \norm{u_t}_H \norm{\mu}_H \\
		& \le \ddt (\hh(\phi)u, \mu)_H + C \norm{\mu}^2_H + C \norm{u}^2_{\infty} \norm{\phi_t}^2_H + C \norm{u_t}^2_H.
	\end{align*}	
	By putting all together, we have:
	\begin{align*}
		\frac{1}{2} \ddt \norm{\nabla \mu}^2_H + \frac{\tau}{2} \ddt \norm{\phi_t}^2_H  + \ddt G(t) & \le C \left( 1 + \norm{\sigma + \chi (1-\phi)}^2_V + \norm{\mu}^2_{\infty} + \norm{u}^2_{\infty}  \right) \norm{\phi_t}^2_H \\
		 & \quad + C \norm{\sigma_t - \chi \phi_t}^2_H + C \norm{\sigma_t}^2_H + C \norm{\mu}^2_V + C \norm{u_t}^2_H,
	\end{align*}
	where
	\[ G(t) = \frac{1}{2} (P(\phi)\mu, \mu)_H - (P(\phi) (\sigma + \chi (1-\phi)), \mu)_H - (\hh(\phi)u, \mu)_H. \]
	Now we can estimate function $G$ at time $t$ and at time $0$, by using \ref{ass:ph2}, Remark \ref{data:reg}, H\"older and Young's inequalities as follows:
	\begin{align*}
		& G(t) \ge P_0 \norm{\mu}^2_H - P_\infty \norm{\sigma + \chi (1-\phi)}_H \norm{\mu}_H - \hh_\infty \norm{u}_H \norm{\mu}_H \\
		& \qquad \ge \frac{P_0}{2} \norm{\mu}^2_H - C \norm{\sigma + \chi(1-\phi)}^2_{L^\infty(0,T;H)} - C \norm{u}^2_{L^\infty(Q_T)}, \\
		& G(0) \le P_\infty \norm{\mu(0)}^2_H + P_\infty \norm{\sigma_0 + \chi (1-\phi_0)}_H \norm{\mu(0)}_H + \hh_\infty \abs{\Omega} \norm{u}_{\Lqt \infty} \norm{\mu(0)}_H \le C.
	\end{align*}
	Finally, for some $0 < \alpha < \min\{P_0,1\}/2$, after integrating on $(0,t)$, for any $t \in (0,T)$ and using Remark \ref{data:reg}, we obtain that
	\begin{align*}
		& \alpha \norm{\mu(t)}^2_V + \frac{\tau}{2} \norm{\phi_t(t)}^2_H \\
		& \quad \le C + C \int_0^t \left( \norm{\sigma_t - \chi \phi_t}^2_H + \norm{\sigma_t}^2_H + \norm{u_t}^2_H \right) \, \de s + C \norm{\sigma + \chi(1-\phi)}^2_{L^\infty(0,T;H)} \\ 
		& \qquad + C \norm{u}^2_{L^\infty(Q_T)} + C \int_0^t \norm{\mu}^2_V \, \de s + C \int_0^t \underbrace{\left( 1 + \norm{\sigma + \chi (1-\phi)}^2_V + \norm{\mu}^2_{\infty} + \norm{u}^2_\infty  \right)}_{\in \, L^1(0,T)} \norm{\phi_t}^2_H \, \de s, 
	\end{align*}
	where the term in front of $\norm{\phi_t}^2_H$ on the right-hand side is in $L^1(0,T)$, thanks to \eqref{weaknorms:est}, \eqref{sigma:linfv}, \eqref{mu:l2w} and \ref{ass:initial2}. Recall also that $\norm{\mu(0)}_V \le C$ and $\norm{\phi_t(0)}_H \le C$ by Remark \ref{data:reg}.
	Then, by Gronwall's lemma, we infer the following uniform estimates:
	\begin{equation}
		\label{phi:w1infh}
		\tau \norm{\phi}_{W^{1,\infty}(0,T;H)} + \norm{\mu}_{L^\infty(0,T;V)} \le C.
	\end{equation}
	In particular, we now observe that 
	\[ R = P(\phi) (\sigma + \chi (1-\phi) - \mu) \in L^\infty(0,T;H), \]
	then, by comparison in \eqref{eq:phi2}, we have that
	\[ \Delta \mu = \phi_t - R - \hh(\phi) u \in L^\infty(0,T;H), \]
	where both these inclusions hold uniformly with respect to the parameters and the data. Therefore, we also have the uniform bound:
	\begin{equation}
		\label{mu:linfw}
		\norm{\mu}_{L^\infty(0,T;W)} \le C, 
	\end{equation}
	which in turn, thanks to the embedding $W \hookrightarrow L^\infty(\Omega)$, also implies that
	\[ \norm{\mu}_{L^\infty(Q_T)} \le C. \]
	
	Now, we mostly follow the procedure used in the proof of \cite[Theorem 2.5]{SS2021}, in order to prove the strict separation property and get more regularity for $\phi$. Even if the argument is very similar to the referenced one, we repeat it for the sake of completeness. First, we rewrite equation \eqref{eq:mu2} as 
	\[ \tau \partial_t \phi + B a \phi + A F'(\phi) = \mu + B J \ast \phi + \chi \sigma, \]
	then we sum the term $-\chi^2 \phi$ to both sides, to obtain the following evolution equation in $Q_T$:
	\begin{equation}
		\label{eq:mu2p}
		\tau \partial_t \phi + B a \phi + A F'(\phi) - \chi^2 \phi = S := \mu + B J \ast \phi + \chi (\sigma - \chi \phi). 
	\end{equation}
	Next, regarding the right-hand side, we observe that $\mu \in L^\infty(0,T;W)$ by \eqref{mu:linfw}, $\sigma - \chi \phi \in L^2(0,T;W)$ by \eqref{sigmaphi:l2w} and also $J \ast \phi \in L^\infty(0,T;H^2(\Omega))$ by \ref{ass:j2}, since $\phi \in L^\infty(0,T;H)$. Indeed, one just has to pass the derivatives onto the convolution kernel $J$, by using also Remark \ref{admissible}. Then, it follows that we have the uniform bound:
	\begin{equation}
		\label{S:l2w}
		\norm{S}_{L^2(0,T;H^2(\Omega))} \le C. 
	\end{equation}
	Additionally, by adding $-\chi \phi_t$ on both sides, we can rewrite \eqref{eq:sigma2} as
	\[ \partial_t (\sigma - \chi \phi) - \Delta (\sigma - \chi \phi) = - P(\phi) (\sigma + \chi (1-\phi) - \mu) - \chi \phi_t + v, \]
	where the right-hand side is in $\LT\infty H$ by \eqref{phi:w1infh}. Then, since $\sigma_0 - \chi \phi_0 \in \Lx \infty$ thanks to \ref{ass:initial2}, by parabolic regularity theory (see \cite[Theorem 7.1, page 181]{ladyzhenskaja}), it follows that $\sigma - \chi \phi$ is uniformly bounded in $\Lqt\infty$. Therefore, since $\Hx2 \hookrightarrow \Lx\infty$, we can actually infer that there exists a constant $\bar{M}$ such that
	 \begin{equation}
	 	\label{S:linfqt}
	 	\norm{S}_{\Lqt\infty} \le \bar{M}. 
	 \end{equation}
 	Now we can prove the strict separation property. By hypothesis \ref{ass:initial2}, we know that there exists $s_0 \in (0,l)$ such that $\norm{\phi_0}_{L^\infty(\Omega)} \le s_0$. Moreover, by hypothesis \ref{ass:flim}, we also know that 
	\[ \lim_{s \to (\pm l)^{\mp}} (AF'(s) - \chi^2 s) = \pm \infty. \]
	Therefore, there surely exists $s^* \in (s_0,l)$ such that 
	\begin{align}
		\label{sep:bounds}
		& AF'(s) - \chi^2 s \ge \bar{M} \quad \text{for any } s \in (s^*,l), \\
		& AF'(s) - \chi^2 s \le - \bar{M} \quad \text{for any } s \in (-l,-s^*), 
	\end{align} 
	where $\bar{M}$ is the one of \eqref{S:linfqt}. Now, we test \eqref{eq:mu2p} by $(\phi - s^*)_+$ in $H$ and we integrate on $(0,t)$, for any $t\in (0,T)$:
	\[ \tau \int_{Q_t} \phi_t (\phi - s^*)_+ \, \de x  \, \de s 
	+ B \int_{Q_t} a \phi (\phi - s^*)_+ \, \de x  \, \de s 
	= \int_{Q_t} (S - (AF'(\phi) - \chi^2 \phi)) (\phi - s^*)_+ \, \de x  \, \de s.  \]
	Then, observe that $(\phi - s^*)_+ = 0$ if $\phi < s^*$, therefore we can restrict the integrals on $Q_t \cap \{ \phi > s^* \}$. In this way, we can infer that 
	\begin{align*}
		& \tau \int_{Q_t \cap \{ \phi > s^* \}} \phi_t (\phi - s^*)_+ \, \de x  \, \de s 
		= \frac{\tau}{2} \int_{Q_t \cap \{ \phi > s^* \}} \ddt \norm{ (\phi - s^*)_+ }^2_H \, \de x  \, \de s
		= \frac{\tau}{2} \norm{ (\phi(t) - s^*)_+ }^2_H,
	\end{align*}
	where $(\phi_0 - s^*)_+ = 0$ by \ref{ass:initial2}, since $s^* > s_0$. Moreover, by \eqref{sep:bounds} and \eqref{S:linfqt}, it follows that 
	\[ \int_{Q_t \cap \{ \phi > s^* \}} (S - (AF'(\phi) - \chi^2 \phi)) (\phi - s^*)_+ \, \de x  \, \de s \le 0. \]
	Consequently, we have that 
	\[ \frac{\tau}{2} \norm{ (\phi(t) - s^*)_+ }^2_H 
	+ \int_{Q_t \cap \{ \phi > s^* \}} B a (\phi - s^*)^2_+ \, \de x  \, \de s \le 0, \]
	but the left-hand side is clearly non-negative, so the only possibility is that $(\phi(x,t) - s^*)_+ = 0$ for a.e.~$(x,t) \in Q_T$, which implies $\phi(x,t) \le s^*$ a.e.~in $Q_T$. By repeating the same procedure with $- (\phi + s^*)_+$, one can also recover the estimate from below, i.e.~$\phi(x,t) \ge - s^*$ for a.e. $(x,t) \in Q_T$. Therefore, the separation property \eqref{separation} is proved.
	
	Now, exactly as in \cite[Section 3.7]{SS2021}, we test the gradient of \eqref{eq:mu2p} by $\nabla \phi \abs{\nabla \phi}^{p-2}$ with $p>1$ to be set later. Note that, to be rigorous, here one should use a truncation argument, even within the Galerkin discretisation. However, by proceeding formally, we have:
	\begin{align*}
		& \tau (\nabla \phi_t, \nabla \phi \abs{\nabla \phi}^{p-2})_H + ( (AF''(\phi) + Ba) \nabla \phi, \nabla \phi \abs{\nabla \phi}^{p-2} )_H \\
		& \quad + (B \nabla a \, \phi, \nabla \phi \abs{\nabla \phi}^{p-2})_H - \chi^2 (\nabla \phi, \nabla \phi \abs{\nabla \phi}^{p-2})_H = (\nabla S, \nabla \phi \abs{\nabla \phi}^{p-2})_H.
	\end{align*}
	Next, by using hypotheses \ref{ass:fc0} and \ref{ass:flim}, we arrive at the inequality:
	\[ \frac{\tau}{p} \ddt \norm{\nabla \phi}^p_{p} + (c_0 - \chi^2) \norm{\nabla \phi}^p_{p} \le (\nabla S, \nabla \phi \abs{\nabla \phi}^{p-2})_H - (B \nabla a \, \phi, \nabla \phi \abs{\nabla \phi}^{p-2})_H.  \]
	Now, we integrate both sides over $(0,t)$, for any $t\in (0,T)$, and we use H\"older's inequality and the generalised Young's inequality with conjugate exponents $p$ and $p/(p-1)$, in order to get:
	\begin{align*}
		& \frac{\tau}{p} \int_\Omega \abs{\nabla \phi (t)}^p \, \de x 
		+ (c_0 -\chi^2) \int_{Q_t} \abs{\nabla \phi}^p \, \de x \, \de s \\
		& \quad = \int_\Omega \abs{\nabla \phi_0}^p \, \de x
		- \int_{Q_t} B \nabla a \, \phi \cdot \nabla \phi \abs{\nabla \phi}^{p-2} \, \de x \, \de s 
		+ \int_{Q_t} \nabla S \cdot \nabla \phi \abs{\nabla \phi}^{p-2} \, \de x \, \de s \\
		& \quad \le \int_\Omega \abs{\nabla \phi_0}^p \, \de x
		+ Bb \int_0^t \norm{\phi}_{p} \norm{\nabla \phi}_{p}^{\frac{p-1}{p}} \, \de s
		+ \int_0^t \norm{\nabla S}_{p} \norm{\nabla \phi}_{p}^{\frac{p-1}{p}} \, \de s \\
		& \quad \le \int_\Omega \abs{\nabla \phi_0}^p \, \de x
		+ Bb \, \sup_{(0,t)} \norm{\nabla \phi}_{p}^{\frac{p-1}{p}} \cdot \int_0^t \norm{\phi}_{p} \, \de s 
		+ \sup_{(0,t)} \norm{\nabla \phi}_{p}^{\frac{p-1}{p}} \cdot \int_0^t \norm{\nabla S}_{p} \, \de s \\
		& \quad \le \int_\Omega \abs{\nabla \phi_0}^p \, \de x
		+ \frac{\tau}{2p} \left( \sup_{(0,t)} \norm{\nabla \phi}_{p}^{\frac{p-1}{p}} \right)^{\frac{p}{p-1}}	
		+ C \left( \int_0^t \norm{\phi}_{p} \, \de s \right)^p 
		+ C \left( \int_0^t \norm{\nabla S}_{p} \, \de s \right)^p \\
		& \quad \le \int_\Omega \abs{\nabla \phi_0}^p \, \de x
		+ \frac{\tau}{2p} \sup_{(0,t)} \norm{\nabla \phi}_{p}^{p} 
		+ C \norm{\phi}^p_{L^1(0,T;\Lx p)} 
		+ C \norm{\nabla S}^p_{L^1(0,T;\Lx p)}, 
	\end{align*}
	where, in the last line, we used the fact that, for $p>1$, the real function $x \mapsto x^{\frac{p}{p-1}}$ is strictly increasing and the supremum is preserved under increasing functions. Then, passing to the supremum over $(0,t)$ also on the left-hand side, we get:
	\begin{align*}
		& \frac{\tau}{2p} \sup_{(0,t)} \int_\Omega \abs{\nabla \phi}^p \, \de x
		+ (c_0 - \chi^2) \int_{Q_t} \abs{\nabla \phi}^p \, \de x \, \de t \\
		& \quad \le \norm{\nabla \phi_0}_{p}^p 
		+ C \norm{\phi}^p_{L^1(0,T;L^p(\Omega))} 
		+ C \norm{\nabla S}^p_{L^1(0,T;L^p(\Omega))},
	\end{align*}
	where the terms on the right-hand side are uniformly bounded if $p\le 6$ by respectively \ref{ass:initial2}, \eqref{weaknorms:est} and \eqref{S:l2w}. Then, we can conclude that 
	\begin{equation}
		\label{phi:linfw16}
		\tau \norm{\phi}_{L^\infty(0,T; W^{1,6}(\Omega))} \le C. 
	\end{equation}
Finally, for any $i,j = 1,2,3$ we apply the differential operator $\partial_{x_i x_j}$ to \eqref{eq:mu2p} and we test the resulting equation by $\partial_{x_i x_j} \phi$, indeed:
	\begin{align*}
		& \frac{\tau}{2} \ddt \norm{\partial_{x_i x_j} \phi}^2_H + ( (AF''(\phi) + Ba) \partial_{x_i x_j} \phi, \partial_{x_i x_j} \phi )_H + ( B (\partial_{x_i} a \, \partial_{x_j} \phi + \partial_{x_j} a \, \partial_{x_i} \phi), \partial_{x_i x_j} \phi)_H \\ 
		& \qquad + (B \partial_{x_i x_j} a \, \phi, \partial_{x_i x_j} \phi)_H + (AF'''(\phi) \partial_{x_i} \phi \, \partial_{x_j} \phi, \partial_{x_i x_j} \phi)_H - \chi^2 \norm{\partial_{x_i x_j} \phi}^2_H \\
		& \quad = (\partial_{x_i x_j} S, \partial_{x_i x_j} \phi)_H.		
	\end{align*}
	Then, by using \ref{ass:fc0}, H\"older's and Young's inequality and \ref{ass:flim} together with the already proven separation property (see the following Remark \ref{F:derivatives} for more details), we infer that
	\begin{align*}
		& \frac{\tau}{2} \ddt \norm{\partial_{x_i x_j} \phi}^2_H + (c_0 - \chi^2) \norm{\partial_{x_i x_j} \phi}^2_H \\
		& \quad \le 2Bb \norm{\nabla \phi}_H \norm{\partial_{x_i x_j} \phi}_H + Bb_2 \norm{\phi}_H \norm{\partial_{x_i x_j} \phi}_H \\
		& \qquad + A \norm{F'''(\phi)}_{L^\infty(Q_T)} \norm{\nabla \phi}^2_{4} \norm{\partial_{x_i x_j} \phi}_H + \norm{\partial_{x_i x_j} S}_H \norm{\partial_{x_i x_j} \phi}_H \\		 
		& \quad \le C \norm{\partial_{x_i x_j} \phi}^2_H + C \norm{\phi}^2_V + C \norm{\nabla \phi}^4_{4} + C \norm{S}^2_{H^2(\Omega)}, 
 	\end{align*}
	 therefore, by summing on $i,j =1,2,3$, integrating on $(0,t)$ for any $t\in (0,T)$ and using Gronwall's lemma, together with \ref{ass:initial2}, \eqref{phi:linfw16} and \eqref{S:l2w}, we get the uniform estimate:
	 \begin{equation}
	 	\label{phi:linfw}
	 	\tau \norm{\phi}_{L^\infty(0,T;H^2(\Omega))} \le C.
	 \end{equation}
 	In particular, since we already knew that $\sigma - \chi \phi \in L^2(0,T;W)$ by \eqref{sigmaphi:l2w}, we also infer that
 	\begin{equation}
 		\label{sigma:l2w}
 		\norm{\sigma}_{L^2(0,T;H^2(\Omega))} \le C.
 	\end{equation}
 	To conclude, we observe that since $F\in \mathcal{C}^3((-l,l))$ and the separation property holds, starting from \eqref{phi:linfw} we can also get that
 	\[ \norm{F'(\phi)}_{L^\infty(0,T;H^2(\Omega))} \le C. \]
 	Then, by comparison in \eqref{eq:mu2p}, we can also see that
 	\[ \tau \phi_t = \mu - A F'(\phi) - B a \phi + B J \ast \phi - \chi \sigma \in L^2(0,T; H^2(\Omega)), \]
 	meaning that we also have the uniform bound
 	\begin{equation}
 		\label{phi:h1w}
 		\tau \norm{\phi}_{H^1(0,T;H^2(\Omega))} \le C.
 	\end{equation}
 	Finally, starting from all these estimates, a strong solution can then be recovered, up to passing to the limit in the discretisation framework. Moreover, estimate \eqref{strongnorms:est} can be deduced by weak lower-semicontinuity. This concludes the proof of Theorem \ref{thm:strongsols}.
\end{proof}

\begin{remark}
	\label{F:derivatives}
	Since $F \in \mathcal{C}^4((-l,l))$ by \ref{ass:flim} and the separation property \eqref{separation} holds, one can freely use the chain rule to compute the derivatives of $F'(\phi)$, as we already did in the previous proof. In particular, since $F$ and its derivatives are locally bounded in $(-l,l)$, by \eqref{separation} we can deduce that
	\[ \norm{F^{(i)}(\phi)}_{\Lqt\infty} \le C \quad \text{for any } i=1,\dots,4, \]
	where the constant $C>0$ depends only on $F$, $s^*$ and $\bar{M}$.
\end{remark}

We now want to prove that the solutions of system \eqref{eq:phi2}--\eqref{ic2} depend continuously from the controls $u$ and $v$ and from initial data $\phi_0$ and $\sigma_0$. As a byproduct, this will also give uniqueness of the strong solutions of Theorem \ref{thm:strongsols}. Indeed, we have the following result:

\begin{theorem}
	\label{thm:contdep}
	Assume hypotheses \emph{\ref{ass:coeff}--\ref{ass:fc0}} and \emph{\ref{ass:j2}--\ref{ass:initial2}}. Let $\phi_{0_1}$, $\sigma_{0_1}$, $u_1$, $v_1$ and $\phi_{0_2}$, $\sigma_{0_2}$, $u_2$, $v_2$ be two sets of data satisfying \emph{\ref{ass:uv2}} and \emph{\ref{ass:initial2}} and let $(\phi_1, \mu_1, \sigma_1)$ and $(\phi_2, \mu_2, \sigma_2)$ two corresponding strong solutions as in Theorem \emph{\ref{thm:strongsols}}. Then, there exists a constant $K>0$, depending only on the data of the system and on the norms of $\{ (\phi_{0_i}, \sigma_{0_i}, u_i, v_i) \}_{i=1,2}$, but not on their difference, such that
	\begin{equation}
		\label{contdep:estimate}
		\begin{split}
		& \tau \norm{\phi_1 - \phi_2}^2_{H^1(0,T;H) \cap L^\infty(0,T;H^2(\Omega))} + \norm{\mu_1 - \mu_2}^2_{L^2(0,T;W)} \\ 
		& \quad + \norm{\sigma_1 - \sigma_2}^2_{H^1(0,T;H) \cap L^\infty(0,T;V) \cap L^2(0,T;H^2(\Omega))} \\
		& \le K \left( \norm{u_1 - u_2}^2_{L^2(0,T;H)} + \norm{v_1 - v_2}^2_{L^2(0,T;H)} + \norm{\phi_{0_1} - \phi_{0_2}}^2_{H^2(\Omega)} + \norm{\sigma_{0_1} - \sigma_{0_2}}^2_{V}  \right).
		\end{split}
	\end{equation}
\end{theorem}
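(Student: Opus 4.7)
The plan is to set $\phi := \phi_1 - \phi_2$, $\mu := \mu_1 - \mu_2$, $\sigma := \sigma_1 - \sigma_2$, and analogously $u,v,\phi_0,\sigma_0$ for the data, subtract the two systems, and reproduce on the difference the same cascade of estimates that led to Theorem \ref{thm:strongsols}. The essential starting observation is that, by Theorem \ref{thm:strongsols}, both $\phi_1$ and $\phi_2$ satisfy the separation bound $\norm{\phi_i}_{L^\infty(Q_T)} \le s^*$ with a uniform $s^* \in (0,l)$, so that by the mean value theorem
\[
F'(\phi_1) - F'(\phi_2) = G_1 \phi, \qquad F''(\phi_1) - F''(\phi_2) = G_2 \phi,
\]
with $\norm{G_j}_{L^\infty(Q_T)}$ bounded only in terms of $F$ and $s^*$; analogously $\abs{P(\phi_1)-P(\phi_2)} + \abs{\hh(\phi_1)-\hh(\phi_2)} \le C\abs{\phi}$ by \ref{ass:ph2}. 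In particular, the reaction difference splits as $R = P(\phi_1)(\sigma - \chi\phi - \mu) + R_\phi$ with $\abs{R_\phi} \le C\abs{\phi}$ pointwise, thanks to the $L^\infty$ bounds on $\sigma_2,\mu_2,\phi_2$ from \eqref{strongnorms:est}, while $\hh(\phi_1)u_1 - \hh(\phi_2)u_2$ reduces to $\hh(\phi_1)u + (\hh(\phi_1)-\hh(\phi_2))u_2$ and is bounded by $C(\abs{u}+\abs{\phi})$. All nonlinearities thus behave as linear operators on the differences with bounded coefficients.

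For the first-level estimate I would test the difference of \eqref{eq:phi2} by $\mu$, the difference of \eqref{eq:mu2} by $-\phi_t$, and the difference of \eqref{eq:sigma2}, rewritten as $\partial_t(\sigma-\chi\phi) = \Delta(\sigma-\chi\phi) - R + v - \chi\phi_t$, by $\sigma - \chi\phi$, and sum. The cross products $(\phi_t,\mu)_H - (\mu,\phi_t)_H$ and $-\chi(\sigma,\phi_t)_H + \chi(\phi_t,\sigma)_H$ cancel by symmetry, and the dissipative part of the reaction produces the non-negative quantity $\norm{\sqrt{P(\phi_1)}(\mu-\sigma+\chi\phi)}_H^2$, which by $P \ge P_0 > 0$ and Young's inequality controls $\norm{\mu}_H^2$ up to a $\norm{\sigma-\chi\phi}_H^2$ contribution to be absorbed later --- exactly the trick already used for the weak existence in Theorem \ref{thm:weaksols} via hypothesis \ref{ass:p}. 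The remaining cross terms are all linear in $\phi,\mu,\sigma$ modulo bounded coefficients, and are treated by Cauchy--Schwarz and Young's inequalities; the extra term $\frac{\chi^2}{2}\ddt\norm{\phi}_H^2$ produced by the $\sigma$-test is absorbed via the standard bound $\norm{\phi(t)}_H^2 \le \norm{\phi_0}_H^2 + \varepsilon\int_0^t\norm{\phi_t}_H^2 + C_\varepsilon\int_0^t\norm{\phi}_H^2$. Gronwall's lemma then yields
\[
\tau\norm{\phi}_{H^1(0,T;H) \cap L^\infty(0,T;H)}^2 + \norm{\mu}_{L^2(0,T;V)}^2 + \norm{\sigma-\chi\phi}_{L^\infty(0,T;H) \cap L^2(0,T;V)}^2 \le K \, \mathcal{D},
\]
where $\mathcal{D}$ denotes the sum of squared norms on the right-hand side of \eqref{contdep:estimate}.

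To close the full strong norms I would follow the cascade of Theorem \ref{thm:strongsols}. By comparison in the difference of \eqref{eq:phi2}, $\Delta\mu \in L^2(0,T;H)$ and hence $\norm{\mu}_{L^2(0,T;W)}$ is controlled; testing the $\sigma$-equation successively by $\partial_t(\sigma-\chi\phi)$ and $-\Delta(\sigma-\chi\phi)$ yields the $H^1(0,T;H)$, $L^\infty(0,T;V)$ and $L^2(0,T;H^2(\Omega))$ bounds on $\sigma$; finally, rewriting the difference of \eqref{eq:mu2} in the coercive form
\[
\tau\phi_t + (Ba + A\,G_1 - \chi^2)\phi = \mu + B\,J\ast\phi + \chi(\sigma-\chi\phi),
\]
whose coefficient in front of $\phi$ is bounded below by $c_0 - \chi^2 > 0$ thanks to \ref{ass:fc0} and \ref{ass:flim}, I would differentiate in space and test by $\nabla\phi\,\abs{\nabla\phi}^{p-2}$ with $p=2,6$ and then by $\partial_{x_i x_j}\phi$, exactly as in the final steps of the proof of Theorem \ref{thm:strongsols}. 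A last comparison in the $\mu$-equation then returns $\tau\phi_t \in L^2(0,T;H^2(\Omega))$. The only genuinely delicate point is this final $L^\infty(H^2)$ bound, because differentiating $G_1\phi$ twice in space produces mixed terms of the form $(F''(\phi_1)-F''(\phi_2))\partial_{x_i x_j}\phi_2$ and $F'''(\phi_i)\partial_{x_k}\phi_j\,\partial_{x_l}\phi_m$ coupling $\phi$ to the individual gradients of $\phi_1,\phi_2$; these are handled through the pointwise bound $\abs{F''(\phi_1)-F''(\phi_2)} \le C\abs{\phi}$ (again by separation), the $L^\infty(0,T;H^2(\Omega)) \hookrightarrow L^\infty(0,T;W^{1,6}(\Omega))$ regularity of both $\phi_i$ from Theorem \ref{thm:strongsols}, and H\"older and Gagliardo--Nirenberg inequalities, after which a last Gronwall argument closes and \eqref{contdep:estimate} follows.
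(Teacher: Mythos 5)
Your proposal is correct and follows the same overall cascade as the paper: difference system, first energy estimate obtained by testing with $\mu$, $-\phi_t$ and $\sigma-\chi\phi$, coercivity of $AF''+Ba-\chi^2$ via the mean value theorem and the separation property, then Gronwall, followed by the gradient test, the $\partial_t(\sigma-\chi\phi)$ and $-\Delta(\sigma-\chi\phi)$ tests, comparison for $\mu$ in $L^2(0,T;W)$, and finally the $\partial_{x_ix_j}$-estimate handled through the individual $L^\infty(0,T;H^2)\hookrightarrow L^\infty(0,T;W^{1,6})$ bounds, H\"older and Gagliardo--Nirenberg. The one genuine deviation is in how you generate the positive $\norm{\mu}_H^2$ term: you keep the quadratic reaction form $\norm{\sqrt{P(\phi_1)}(\sigma-\chi\phi-\mu)}_H^2$ on the left and invoke $P\ge P_0>0$, whereas the paper deliberately avoids leaning on $P_0>0$ here and instead tests the difference of the $\mu$-equation by $\delta\mu$ with a small $\delta$ (at the price of constants depending on $\tau$, cf.\ the choice \eqref{eq:epsdelta} and Remark \ref{rmk:epsdelta}); the paper's remark following the proof explicitly records your variant as an admissible alternative, so this is a legitimate, slightly more elementary route, with the trade-off that it uses the restrictive hypothesis $P_0>0$ a second time. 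Two small points to tighten: the pointwise bound $\abs{R_\phi}\le C\abs{\phi}$ cannot be justified directly from \eqref{strongnorms:est}, since $\sigma_2\in L^\infty(0,T;V)$ does not embed into $L^\infty(Q_T)$ in 3D --- either invoke the $L^\infty(Q_T)$ bound on $\sigma_2-\chi\phi_2$ obtained by parabolic regularity inside the proof of Theorem \ref{thm:strongsols}, or, as the paper does, keep the coefficient $\norm{\sigma_2+\chi(1-\phi_2)-\mu_2}_{L^\infty(\Omega)}^2\in L^1(0,T)$ (from $L^2(0,T;H^2)$) and apply Gronwall with an $L^1$-in-time weight; and the proposed $p=6$ version of the $\nabla\phi\abs{\nabla\phi}^{p-2}$ test for the \emph{difference} is unnecessary (and its mixed terms are not obviously closable), since the second-derivative estimate only needs $\norm{\phi}_{L^\infty(0,T;V)}$ for the difference together with the $W^{1,6}$ regularity of $\phi_1,\phi_2$ themselves, exactly as in the paper's interpolation step.
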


\begin{remark}
	As a consequence of Theorem \ref{thm:contdep}, we also immediately obtain \emph{uniqueness of strong solutions} to \eqref{eq:phi2}--\eqref{ic2}. Indeed, if $(\phi_1, \mu_1, \sigma_1)$ and $(\phi_2, \mu_2, \sigma_2)$ are two solutions with respect to the same data $(\phi_0, \sigma_0, u, v)$, then the right-hand side of \eqref{contdep:estimate} is equal to $0$ and uniqueness follows. 
\end{remark}

\begin{proof}
	Let $\phi = \phi_1 - \phi_2$, $\mu = \mu_1 - \mu_2$, $\sigma = \sigma_1 - \sigma_2$, $u = u_1 - u_2$, $v = v_1 - v_2$, $\phi_0 = \phi_{0_1} - \phi_{0_2}$ and $\sigma_0 = \sigma_{0_1} - \sigma_{0_2}$, then, up to adding and subtracting some terms, they solve:     
	\begin{align}
		 \partial_t \phi & = \Delta \mu + P(\phi_1) (\sigma - \chi \phi - \mu) + (P(\phi_1) - P(\phi_2)) (\sigma_2 + \chi (1-\phi_2) - \mu_2) & \nonumber \\
		& \quad- \hh(\phi_1) u - (\hh(\phi_1) - \hh(\phi_2)) u_2 & \text{in } Q_T,  \label{eq:phi3}\\
		 \mu & = \tau \partial_t \phi + A(F'(\phi_1)- F'(\phi_2)) + Ba \phi - BJ \ast \phi - \chi \sigma & \text{in } Q_T,  \label{eq:mu3} \\
		 \partial_t \sigma & = \Delta \sigma - \chi \Delta \phi - P(\phi_1) (\sigma - \chi \phi - \mu) & \nonumber \\
		& \quad - (P(\phi_1) - P(\phi_2)) (\sigma_2 + \chi (1-\phi_2) - \mu_2) + v & \text{in } Q_T, \label{eq:sigma3}
	\end{align}
	paired with boundary and initial conditions:
	\begin{align}
		& \partial_{\n} \mu = \partial_{\n} (\sigma - \chi \phi) = 0 & \text{on } \partial \Omega \times (0,T), \label{bc3} \\
		& \phi(0) = \phi_0, \quad \sigma(0) = \sigma_0 & \text{in } \Omega. \label{ic3}
	\end{align}
	Now, for the main estimate, we test equation \eqref{eq:phi3} by $\mu$ in $H$ and, by using Cauchy-Schwarz and Young's inequalities and the fact that $P$ and $\hh$ are Lipschitz functions on bounded intervals, together with the uniform estimates on the $L^\infty(Q_T)$-norms of $\phi_{1,2}$, we obtain: 
	\begin{equation}
		\label{contdep:test1mu}
		\begin{split}
			(\phi_t, \mu)_H + \norm{\nabla \mu}^2_H & \le (P(\phi_1) (\sigma - \chi \phi - \mu), \mu)_H  + \eps \norm{\mu}^2_H + C_\eps \norm{u}^2_H \\
			& \quad + C_\eps \left(1 + \norm{\sigma_2 + \chi (1-\phi_2) - \mu_2}^2_{\infty} + \norm{u_2}^2_{\infty} \right) \norm{\phi}^2_H, 
		\end{split}
		\end{equation}
	where $\eps \gs 0$ is to be chosen later and $\norm{\sigma_2 + \chi (1-\phi_2) - \mu_2}^2_{\infty} \in L^1(0,T)$, since every term is bounded in $L^2(0,T;H^2(\Omega))$ and $\Hx 2 \hookrightarrow L^\infty(\Omega)$. Next, we test equation \eqref{eq:mu3} by $- \phi_t$ in $H$ and we use Leibniz's rule in time and rewrite the resulting terms accordingly. Then, adding and subtracting $(Ba \phi, \phi_t)_H = \mezzo \ddt (Ba \phi, \phi)_H$ and by using the separation property together with the fact that $F''$ is Lipschitz on $[-s^*,s^*]$, we get:
	\begin{align*}
			& - (\mu, \phi_t)_H + \tau \norm{\phi_t}^2_H = - \ddt ( A(F'(\phi_1) - F'(\phi_2)), \phi )_H - \ddt (Ba \, \phi, \phi)_H + (Ba \phi, \phi_t)_H \\ 
			& \qquad + ( A(F''(\phi_1) - F''(\phi_2)) \phi_{1,t} + A F''(\phi_1) \phi_t, \phi )_H + (B J \ast \phi, \phi_t)_H + \chi (\sigma, \phi_t)_H \\
			& \quad \le - \ddt \left( (A(F'(\phi_1) - F'(\phi_2)) + Ba) \phi, \phi \right)_H + \chi (\sigma, \phi_t)_H + \frac{\tau}{2} \norm{\phi_t}^2_H \\
			& \qquad + C \norm{\sigma}^2_H + C (1 + \norm{\phi_{1,t}}^2_\infty ) \norm{\phi}^2_H, 
		\end{align*}
	where $\norm{\phi_{1,t}}^2_{\infty} \in L^1(0,T)$ by \eqref{phi:h1w}. Then, we test equation \eqref{eq:mu3} by $\delta \mu$ in $H$, with $\delta \gs 0$ to be chosen later, and, again since $F'$ is Lipschitz on $[-s^*,s^*]$ and the separation property holds, we infer that
	\begin{align*}
			\delta \norm{\mu}^2_H & = \delta \tau (\phi_t, \mu)_H + \delta ( A(F'(\phi_1) - F'(\phi_2)), \mu)_H + \delta (Ba \, \phi, \mu)_H \\ 
			& \quad - \delta (B J \ast \phi, \mu)_H - \delta \chi (\sigma, \mu)_H \\
			& \le \gamma \delta \tau \norm{\mu}^2_H + C_\gamma \delta \tau \norm{\phi_t}^2_H + \eps \norm{\mu}^2_H + C_{\delta,\eps} \norm{\phi}^2_H + C_{\delta,\eps} \norm{\sigma}^2_H,
	\end{align*}
	where we used Young's inequality with $\gamma \gs 0$ to be set later and $\eps \gs 0$ is the same one used before. Finally, we test equation \eqref{eq:sigma3} by $(\sigma - \chi \phi)$ and, with similar techniques, we find that
	\begin{align*}
			\frac{1}{2} \ddt \norm{\sigma}^2_H + \norm{\nabla (\sigma - \chi \phi)}^2_H & \le \chi (\sigma_t, \phi)_H - (P(\phi_1)(\sigma - \chi \phi - \mu), \sigma - \chi \phi)_H  \\
			& \quad + C (1 + \norm{\sigma_2 + \chi (1-\phi_2) - \mu_2}^2_{\infty} ) \norm{\phi}^2_H + C \norm{\sigma}^2_H + C \norm{v}^2_H, 
		\end{align*}
	where again $\norm{\sigma_2 + \chi (1-\phi_2) - \mu_2}^2_{\infty} \in L^1(0,T)$. Therefore, by summing up the previous four inequalities, after cancellations we get:
	\begin{equation}
			\label{contdep:est0}
			\begin{split}
				& \left( \frac{\tau}{2} - C_\gamma \delta \tau \right) \norm{\phi_t}^2_H + \left( \delta - 2 \eps - \gamma \delta \tau \right) \norm{\mu}^2_H + \norm{\nabla \mu}^2_H + \norm{\nabla (\sigma - \chi \phi)}^2_H + \ddt G(t) \\
				& \quad \le C_{\delta,\eps} \left( 1 + \norm{\sigma_2 + \chi (1-\phi_2) - \mu_2}^2_{\infty} + \norm{\phi_{1,t}}^2_{\infty} + \norm{u_2}^2_{\infty} \right) \norm{\phi}^2_H \\ 
				& \quad \quad + C_{\delta, \eps} \norm{\sigma}^2_H + C_\eps \norm{u}^2_H + C \norm{v}^2_H \underbrace{ - ( P(\phi_1) (\sigma - \chi \phi - \mu), \sigma - \chi \phi - \mu )_H }_{\le 0}, 
			\end{split}
	\end{equation}
	where
	\[ G(t) := \frac{1}{2} \norm{\sigma}^2_H + \left( (A(F'(\phi_1) - F'(\phi_2)) + Ba) \phi, \phi \right)_H - \chi (\sigma, \phi)_H.  \]
	Now, we can independently choose first $\eps \gs 0$, $\gamma = \gamma(\tau) \gs 0$ and then $\delta = \delta(\tau) \gs 0$, for any value of $\tau \gs 0$, such that 
	\begin{equation}
	\label{eq:epsdelta}
		\alpha_1 = \delta - 2 \eps - \gamma \delta \tau \gs 0 \quad \text{and} \quad \alpha_2 = \frac{\tau}{2} - C_\gamma \delta \tau \gs 0.
	\end{equation} 
	Moreover, by using Lagrange's theorem and the fact that $\phi_1, \phi_2 \in \mathcal{C}^0(\bar{Q_T})$ by standard embeddings, we can say that $F'(\phi_1(x,t)) - F'(\phi_2(x,t)) = F''(\bar{s}) \phi(x,t)$ for some $\bar{s}= \bar{s}(x,t) \in \R$, which is at least a measurable function, for a.e.~$(x,t) \in Q_T$. Then, by hypotheses \ref{ass:fc0} and \ref{ass:flim}, together with Cauchy-Schwarz and Young's inequalities, we can estimate $G(t)$ from below as
	\[ G(t) \ge \frac{1}{4} \norm{\sigma}^2_H + \left( c_0 - \chi^2 \right) \norm{\phi}^2_H.  \]
	Consequently, by integrating \eqref{contdep:est0} on $(0,t)$ for any $t \in (0,T)$, we infer that
	\begin{align*}
			& \frac{1}{4} \norm{\sigma}^2_H + \left( c_0 - \chi^2 \right) \norm{\phi}^2_H + \alpha_2(\tau) \int_0^t \norm{\phi_t}^2_H \, \de s + \alpha_1(\tau) \int_0^t \norm{\mu}^2_H \, \de s + \int_0^t \norm{\nabla \mu}^2_H \, \de s \\ 
			& \quad + \int_0^t \norm{\nabla (\sigma - \chi \phi)}^2_H \, \de s \le \norm{\phi_0}^2_H + \norm{\sigma_0}^2_H + C_\tau \int_0^t\norm{\sigma}^2_H \, \de s + C \int_0^T \left( \norm{u}^2_H + \norm{v}^2_H \right) \, \de s \\
			& \qquad + C_\tau \int_0^t \left( 1 + \norm{\sigma_2 + \chi (1-\phi_2) - \mu_2}^2_{\infty} + \norm{\phi_{1,t}}^2_{\infty} + \norm{u_2}^2_{\infty} \right) \norm{\phi}^2_H \, \de s, 
	\end{align*}
	and, by using Gronwall's lemma, we obtain the first continuous dependence estimate:
	\begin{equation}
		\label{contdep:est1}
		\begin{split}
				& \norm{\phi}^2_{L^\infty(0,T;H)} + \tau \norm{\phi}^2_{H^1(0,T;H)} + \norm{\sigma}^2_{L^\infty(0,T;H)} + \norm{\mu}^2_{L^2(0,T;V)} + \norm{\nabla (\sigma - \chi \phi)}^2_{L^2(0,T;H)} \\
				& \qquad \le C_\tau \left( \norm{u}^2_{L^2(0,T;H)} + \norm{v}^2_{L^2(0,T;H)} + \norm{\phi_0}^2_H + \norm{\sigma_0}^2_H \right).   
			\end{split}
	\end{equation}
	Observe that here the constant $C_\tau$ on the right-hand side depends on $\tau$, but this can be avoided under mild additional assumptions (see Remark \ref{rmk:epsdelta} below).
	Next, we test the gradient of equation \eqref{eq:mu3} by $\nabla \phi$ in $H$ and, after a careful rewriting of the terms, we get that
	\begin{align*}
		& \frac{\tau}{2} \ddt \norm{\nabla \phi}^2_H + ( (A F''(\phi_1) + Ba) \nabla \phi, \nabla \phi )_H = (\nabla \mu, \nabla \phi)_H \\ 
		& \quad + ( A(F''(\phi_1) - F''(\phi_2)) \nabla \phi_2, \nabla \phi)_H - (B \nabla a \, \phi, \nabla \phi)_H + (B \nabla J \ast \phi, \nabla \phi)_H - \chi (\nabla \sigma, \nabla \phi)_H.
	\end{align*}
	Hence, by adding and subtracting $\chi^2 (\nabla \phi, \nabla \phi)_H$ and by using hypotheses \ref{ass:j}, \ref{ass:fc0}, \ref{ass:flim}, the Lipschitz properties of $F''$ and H\"older and Young's inequalities, we infer that
	\begin{align*}
		& \frac{\tau}{2} \ddt \norm{\nabla \phi}^2_H + c_0 \norm{\nabla \phi}^2_H \\
		& \quad \le C \norm{\phi}_{4} \norm{\nabla \phi_2}_{4} \norm{\nabla \phi}_H + C \norm{\nabla \mu}^2_H + C \norm{\phi}^2_H + C \norm{\nabla \phi}^2_H + C \norm{\nabla (\sigma - \chi \phi)}^2_H \\
		& \quad \le C \big( 1 + \underbrace{\norm{\nabla \phi_2}^2_V}_{\in \, L^\infty(0,T)} \big) \norm{\phi}^2_V + C \norm{\nabla \mu}^2_H + C \norm{\nabla (\sigma - \chi \phi)}^2_H. 
	\end{align*}
	Then, by integrating on $(0,t)$, for any $t\in (0,T)$, and applying Gronwall's lemma, together with \eqref{contdep:est1}, we obtain the estimate
	\begin{align}
		\label{contdep:philinfv}
		\tau \norm{\phi}^2_{L^\infty(0,T;V)} \le C_\tau \left( \norm{u}^2_{L^2(0,T;H)} + \norm{v}^2_{L^2(0,T;H)} + \norm{\phi_0}^2_V + \norm{\sigma_0}^2_H \right),
	\end{align}
	and by comparison with the previous estimate on $\sigma - \chi \phi \in L^2(0,T;V)$, we also get that
	\begin{align}
		\label{contdep:sigmal2v}
		\norm{\sigma}^2_{L^2(0,T;V)} \le C_\tau \left( \norm{u}^2_{L^2(0,T;H)} + \norm{v}^2_{L^2(0,T;H)} + \norm{\phi_0}^2_V + \norm{\sigma_0}^2_H \right).
	\end{align}
	Now, we test equation \eqref{eq:sigma3} by $\partial_t (\sigma - \chi \phi)$ and, by using hypothesis \ref{ass:ph2} and Cauchy-Schwarz and Young's inequalities, we infer that
	\begin{align*}
	    \norm{\sigma_t}^2_H + \frac{1}{2} \ddt \norm{\nabla (\sigma - \chi \phi)}^2_H & \le \frac{1}{2} \norm{\sigma_t}^2_H + C (1 + \norm{\sigma_2 + \chi (1-\phi_2) - \mu_2}^2_{\infty} ) \norm{\phi}^2_H \\ 
		& \quad + C \norm{\phi_t}^2_H + C \norm{\sigma}^2_H + C \norm{\mu}^2_H + C \norm{v}^2_H.
	\end{align*}
	Therefore, by integrating in $(0,t)$ for any $t\in (0,T)$ and then applying Gronwall's lemma and \eqref{contdep:est1}, we deduce that
	\begin{equation}
		\label{contdep:sigmah1h}
		\norm{\sigma}^2_{H^1(0,T;H)} + \norm{\sigma}^2_{L^\infty(0,T;V)} \le C_\tau \left( \norm{u}^2_{L^2(0,T;H)} + \norm{v}^2_{L^2(0,T;H)} + \norm{\phi_0}^2_V + \norm{\sigma_0}^2_V \right).
	\end{equation}
	Next, we test equation \eqref{eq:sigma3} by $- \Delta (\sigma - \chi \phi)$ and, by similar methods, we infer that 
	\begin{align*}
		\norm{\Delta (\sigma - \chi \phi)}^2_H & \le \frac{1}{2} \norm{\Delta (\sigma - \chi \phi)}^2_H + C (1 + \norm{\sigma_2 + \chi (1-\phi_2) - \mu_2}^2_{\infty} ) \norm{\phi}^2_H \\ 
		& \qquad + C \norm{\sigma_t}^2_H + C \norm{\sigma}^2_H + C \norm{\mu}^2_H + C \norm{v}^2_H,
	\end{align*}
	then, by integrating on $(0,T)$ and using \eqref{contdep:est1} and \eqref{contdep:sigmah1h}, we get the estimate:
	\begin{equation}
		\label{contdep:sigmaphil2w}
		\norm{\sigma - \chi \phi}^2_{L^2(0,T;W)} \le C_\tau \left( \norm{u}^2_{L^2(0,T;H)} + \norm{v}^2_{L^2(0,T;H)} + \norm{\phi_0}^2_V + \norm{\sigma_0}^2_V \right).
	\end{equation}
	Moreover, in a similar way, we can also test equation \eqref{eq:phi3} by $- \Delta \mu$ and then integrate on $(0,T)$ to get:
	\begin{equation}
		\label{contdep:mul2w}
		\norm{\mu}^2_{L^2(0,T;W)} \le C_\tau \left( \norm{u}^2_{L^2(0,T;H)} + \norm{v}^2_{L^2(0,T;H)} + \norm{\phi_0}^2_V + \norm{\sigma_0}^2_V \right).
	\end{equation}
	Finally, for any $i,j = 1,2,3$, we apply the differential operator $\partial_{x_i x_j}$ to \eqref{eq:mu3}, which makes sense in $H$, and we test the resulting equation by $\partial_{x_i x_j} \phi$. Then, after careful rewriting of the terms arising from the derivatives of $F$, we get:
	\begin{align*}
		& (\partial_{x_i x_j} \mu, \partial_{x_i x_j} \phi)_H = \frac{\tau}{2} \ddt \norm{\partial_{x_i x_j} \phi}^2_H + ( (AF''(\phi_1) + Ba) \, \partial_{x_i x_j} \phi, \partial_{x_i x_j} \phi )_H \\
		& \quad + ( (F''(\phi_1) - F''(\phi_2)) \partial_{x_i x_j} \phi_2, \partial_{x_i x_j} \phi )_H + ( F'''(\phi_1) (\partial_{x_i} \phi_1 + \partial_{x_i} \phi_2) \, \partial_{x_j} \phi, \partial_{x_i x_j} \phi)_H \\
		& \quad + ( (F'''(\phi_1) - F'''(\phi_2)) \partial_{x_i} \phi_2 \, \partial_{x_j} \phi_2, \partial_{x_i x_j} \phi )_H + ( B (\partial_{x_i} a \, \partial_{x_j} \phi + \partial_{x_j} a \, \partial_{x_i} \phi), \partial_{x_i x_j} \phi )_H \\ 
		& \quad + (B \partial_{x_i x_j} a \, \phi, \partial_{x_i x_j} \phi)_H - B ( \partial_{x_i} (\partial_{x_j} J \ast \phi), \partial_{x_i x_j} \phi )_H \\ 
		& \quad - \chi ( \partial_{x_i x_j} (\sigma - \chi \phi), \partial_{x_i x_j} \phi )_H - \chi^2 \norm{\partial_{x_i x_j} \phi}^2_H.
	\end{align*}
	Hence, by using hypotheses \ref{ass:fc0}, \ref{ass:j2}, \ref{ass:flim}, Remark \ref{F:derivatives} and the generalised H\"older's and Young's inequalities, together with \eqref{gn:ineq} and \eqref{agmon}, we infer that
	\begin{align*}
		& \frac{\tau}{2} \ddt \norm{\partial_{x_i x_j} \phi}^2_H  + (c_0 - \chi^2) \norm{\partial_{x_i x_j} \phi}^2_H \\
		& \quad \le C \norm{\phi}_{\infty} \norm{\partial_{x_i x_j} \phi_2}_H \norm{\partial_{x_i x_j} \phi}_H + C \norm{\nabla \phi_1 + \nabla \phi_2}_{6} \norm{\nabla \phi}_{3} \norm{\partial_{x_i x_j} \phi}_H \\ 
		& \qquad + C \norm{\phi}_{6} \norm{\nabla \phi_2}^2_{6} \norm{\partial_{x_i x_j} \phi}_H + C \norm{\partial_{x_i x_j} \phi}^2_H + C \norm{\phi}^2_V + C \norm{\sigma - \chi \phi}^2_W + C \norm{\mu}^2_W \\
		& \quad \le C \norm{\partial_{x_i x_j} \phi_2}_H \norm{\phi}^{1/2}_V \norm{\phi}^{3/2}_{H^2(\Omega)} + C \norm{\nabla \phi_1 + \nabla \phi_2}_{6} \norm{\nabla \phi}^{1/2}_H \norm{\phi}^{3/2}_{H^2(\Omega)} \\
		& \qquad + C \norm{\phi}_{6} \norm{\nabla \phi_2}^2_{6} \norm{\partial_{x_i x_j} \phi}_H + C \norm{\partial_{x_i x_j} \phi}^2_H + C \norm{\phi}^2_V + C \norm{\sigma - \chi \phi}^2_W + C \norm{\mu}^2_W \\
		& \quad \le C \norm{\phi}^2_{H^2(\Omega)} + C \left( 1 + \norm{\phi_2}^4_{H^2(\Omega)} + \norm{\nabla \phi_1 + \nabla \phi_2}^4_{6} + \norm{\nabla \phi_2}^4_{6} \right) \norm{\phi}^2_V \\
		& \qquad + C \norm{\sigma - \chi \phi}^2_W + C \norm{\mu}^2_W,
	\end{align*}
	where $\norm{\phi_2}^4_{\Hx 2} + \norm{\nabla \phi_1 + \nabla \phi_2}^4_{6} + \norm{\nabla \phi_2}^4_{6} \in L^\infty (0,T)$, since, by \eqref{phi:linfw}, $\phi_2 \in L^\infty(0,T;H^2(\Omega))$, which is embedded into $L^\infty(0,T; W^{1,6}(\Omega))$. Therefore, by summing on $i,j =1,2,3$, integrating on $(0,t)$, for any $t\in (0,T)$, and using Gronwall's lemma, together with the previous estimates \eqref{contdep:philinfv}, \eqref{contdep:sigmaphil2w}, \eqref{contdep:mul2w}, we infer that
	\begin{equation}
		\label{contdep:philinfw}
		\tau \norm{\phi}^2_{L^\infty(0,T;H^2(\Omega))} \le C_\tau \left( \norm{u}^2_{L^2(0,T;H)} + \norm{v}^2_{L^2(0,T;H)} + \norm{\phi_0}^2_{H^2(\Omega)} + \norm{\sigma_0}^2_V \right).
	\end{equation} 
	Moreover, by comparison with \eqref{contdep:sigmaphil2w}, we also deduce that
		\begin{equation}
		\label{contdep:sigmal2w}
		\norm{\sigma}^2_{L^2(0,T;H^2(\Omega))} \le C_\tau \left( \norm{u}^2_{L^2(0,T;H)} + \norm{v}^2_{L^2(0,T;H)} + \norm{\phi_0}^2_{H^2(\Omega)}+ \norm{\sigma_0}^2_V \right).
	\end{equation}
	This concludes the proof of Theorem \ref{thm:contdep}, since all the constants that appear in front of the estimates depend only on parameters and possibly on the norms of $\{ (\phi_{0_i}, \sigma_{0_i}) \}_{i=1,2}$, but not on their difference.
\end{proof}

\begin{remark}
	\label{rmk:epsdelta}
	We wish to point out that, under the additional assumption that $0 \ls \tau \le M$, for some $M \gs 0$, one can show that all constants $C_\tau$ appearing in the proof are actually independent of $\tau$ for any $\tau \in (0,M]$. This can be useful in applications, where $\tau$ is generally kept very small and is possibly tending to $0$. 
	
	Indeed, going back to \eqref{eq:epsdelta}, one can first choose $\eps \gs 0$ and $0 \ls \gamma \ls 1/M$ in such a way that 
	\[ \alpha_1 = \delta - 2\eps - \gamma \delta \tau \gs 0 \quad \forall \tau \in (0,M]. \]
	Then, $C_\gamma$ is now fixed independently of $\tau \in (0,M]$, therefore one can choose $\delta \gs 0$ small enough such that 
	\[ \alpha_2 = \frac{\tau}{2} - C_\gamma \delta \tau = \tau \bar{\alpha_2} \quad \text{with $\bar{\alpha_2} \gs 0$ independently of $\tau \in (0,M]$}. \]
	At this point, it is clear that the constants $C_\tau$, appearing from \eqref{contdep:est1} onwards, depend only on the other parameters and on $M$, which is fixed. 
\end{remark}

\begin{remark}
	In the previous proof, in order to close the first continuous dependence estimate \eqref{contdep:est1}, we need a positive term on the left-hand side for $\norm{\mu}^2_H$. This is obtained by testing \eqref{eq:mu3} by $\delta \mu$, with a small constant $\delta$, and leads to either constants depending on $\tau$ or to an additional hypothesis on $\tau$ as in Remark \ref{rmk:epsdelta}. Another possibility could be to directly use hypothesis \ref{ass:ph2} on $P_0 > 0$ on the term $(P(\phi_1) \mu, \mu)$, which can be brought on the left when testing \eqref{eq:phi3} by $\mu$. 
	Namely, inequality \eqref{contdep:test1mu} would become 
	\begin{align*}
		(\phi_t, \mu)_H + \norm{\nabla \mu}^2_H + (P(\phi_1) \mu, \mu)_H & \le (P(\phi_1) (\sigma - \chi \phi), \mu)_H  + \eps \norm{\mu}^2_H + C_\eps \norm{u}^2_H \\
		& \quad + C_\eps \left(1 + \norm{\sigma_2 + \chi (1-\phi_2) - \mu_2}^2_{\infty} + \norm{u_2}^2_{\infty} \right) \norm{\phi}^2_H, 
	\end{align*}
	which then, by \ref{ass:ph2}, Cauchy-Schwarz and Young, would immediately imply
	\begin{align*}
		(\phi_t, \mu)_H + \norm{\nabla \mu}^2_H + (P_0 - 3\eps) \norm{\mu}^2_H & \le C_\eps \norm{\phi}^2_H + C_\eps \norm{\sigma}^2_H + C_\eps \norm{u}^2_H \\
		& \quad + C_\eps \left(1 + \norm{\sigma_2 + \chi (1-\phi_2) - \mu_2}^2_{\infty} + \norm{u_2}^2_{\infty} \right) \norm{\phi}^2_H, 
	\end{align*} 
	thus removing the need of testing \eqref{eq:mu3} by $\delta \mu$.
	However, we choose the first method, since the hypothesis $P_0 > 0$ could be restrictive in applications, so we would like to avoid heavy use of it, even if it is necessary in the proof of Theorem \ref{thm:strongsols}. Moreover, the procedure we used could possibly be adapted to even different reaction terms, provided that one is able to prove existence of strong solutions. The same strategy shall be pursued also in Section \ref{sec:optcont}.
\end{remark}

\section{Optimal control problem}
\label{sec:optcont}

From now on, we consider the initial data $\phi_0, \sigma_0$, satisfying \ref{ass:initial2}, fixed. We recall the optimal control problem that we want to study: 

\bigskip
\noindent(CP) \textit{Minimise the cost functional}
\begin{equation}
	\begin{split}
		\mathcal{J}(\phi, \sigma, u, v) & = \, \frac{\alpha_{\Omega}}{2} \int_{\Omega} |\phi(T) - \phi_{\Omega}|^2 \,\de x  + \frac{\alpha_Q}{2} \int_{0}^{T} \int_{\Omega} |\phi - \phi_Q|^2 \,\de x \,\de t \\
		& \quad + \frac{\beta_{\Omega}}{2} \int_{\Omega} |\sigma(T) - \sigma_{\Omega}|^2 \,\de x  + \frac{\beta_Q}{2} \int_{0}^{T} \int_{\Omega} |\sigma - \sigma_Q|^2 \,\de x \,\de t \\ 
		& \quad + \frac{\alpha_u}{2} \int_{0}^{T} \int_{\Omega} |u|^2 \,\de x \,\de t + \frac{\beta_v}{2} \int_{0}^{T} \int_{\Omega} |v|^2 \,\de x \,\de t,
	\end{split}
\end{equation}
\textit{subject to the control constraints}
\begin{align} 
	& u \in \mathcal{U}_{ad} := \{ u \in L^{\infty}(Q_T) \cap H^1(0,T;H) \mid u_{\text{min}} \le u \le u_{\text{max}} \text{ a.e.~in } Q_T, \norm{u}_{H^1(0,T;H)} \le M \} \nonumber \\
	& v \in \mathcal{V}_{ad} := \{ v \in L^{\infty}(Q_T) \mid v_{\text{min}} \le v \le v_{\text{max}} \text{ a.e.~in } Q_T \}
\end{align}
\textit{and to the state system \eqref{eq:phi2}-\eqref{ic2}}.
\bigskip

Regarding the parameters at play, we make the following hypotheses:
\begin{enumerate}[font = \bfseries, label = C\arabic*., ref=\bf{C\arabic*}]
	\item\label{C1} $\alpha_\Omega, \alpha_Q, \alpha_u, \beta_\Omega, \beta_Q, \beta_v \ge 0$, but not all equal to $0$.
	\item\label{C2} $\phi_\Omega \in L^2(\Omega)$, $\sigma_\Omega \in H^1(\Omega)$ and $\phi_Q, \sigma_Q \in L^2(Q_T)$.
	\item\label{C3} $u_{\text{min}}, u_{\text{max}}, v_{\text{min}}, v_{\text{max}} \in L^\infty(Q_T)$, with $u_{\text{min}} \ge 0$ a.e. in $\Omega$. Let also $M'>0$ be such that for any $u \in \mathcal{U}_{ad}$ and for any $v \in \mathcal{V}_{ad}$ 
	\[ \norm{u}_{L^\infty(Q_T)}, \norm{v}_{L^\infty(Q_T)} \le M'. \]
	\item\label{C4} $P, \hh \in \mathcal{C}^2(\R) \cap W^{1,\infty}(\R)$.
\end{enumerate}

\begin{remark}
	The hypothesis $u_{\text{min}} \ge 0$ comes from modelling assumptions, see also \cite{CSS2021} and references therein. Moreover, we need the stronger hypothesis $\sigma_\Omega \in H^1(\Omega)$ to prove well-posedness of the adjoint system in Section $5.3$.
	
	Finally, we would like to comment on the structure of $\Uad$. While the best choice, from an applicative viewpoint, would be a subset of $\Lqt \infty$ with box constraints, e.g.~$\{ u \in L^{\infty}(Q_T) \mid u_{\text{min}} \le u \le u_{\text{max}} \text{ a.e.~in } Q_T \}$, the presence of $u$ as a source term in the Cahn-Hilliard equation demands higher regularity, if one wants to achieve strong well-posedness. Indeed, we also need a bound on the $\HT 1 H$--norm, as can be seen by \ref{ass:uv2} and Theorems \ref{thm:strongsols} and \ref{thm:contdep}.
\end{remark}

By Theorems \ref{thm:strongsols} and \ref{thm:contdep}, we know that for any $(u,v) \in \Uad \times \Vad$ there exists a unique strong solution $(\phi, \mu, \sigma) \in \mathbb{X}$ to \eqref{eq:phi2}--\eqref{ic2}, where 
\begin{align*}
	\mathbb{X} & := (W^{1,\infty}(0,T;H) \cap H^1(0,T;H^2(\Omega)) \cap L^\infty(0,T;H^2(\Omega))) \\ 
	& \quad \times L^\infty(0,T;W) \times (H^1(0,T;H) \cap L^\infty(0,T,V) \cap L^2(0,T;H^2(\Omega))),
\end{align*}
therefore the optimal control problem (CP) is well-defined. Our goal is to prove existence of an optimal control and then find the first-order necessary optimality conditions. 

\begin{theorem}
	\label{thm:excont}
	Assume hypotheses \emph{\ref{ass:coeff}--\ref{ass:fc0}}, \emph{\ref{ass:j2}--\ref{ass:initial2}} and \emph{\ref{C1}}--\emph{\ref{C4}}. Then the optimal control problem (CP) admits at least one solution $(\ub, \vb) \in \Uad \times \Vad$, such that if $(\phib, \mub, \sigmab)$ is the solution to \eqref{eq:phi2}--\eqref{ic2} associated to $(\ub, \vb)$, one has that
	\begin{equation}
		\mathcal{J}(\phib, \sigmab, \ub, \vb) = \min_{(u,v) \,\in \,\Uad \times \Vad} \, \mathcal{J}(\phi, \sigma, u, v).
	\end{equation}
\end{theorem}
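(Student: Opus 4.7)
The plan is to apply the direct method of the calculus of variations. First, since by hypothesis \ref{C1} the functional $\mathcal{J}$ is bounded below by $0$, and since the admissible set $\Uad \times \Vad$ is non-empty (for instance, $u \equiv 0$ lies in $\Uad$ provided $u_{\min}\le 0 \le u_{\max}$; otherwise, any feasible constant works), the infimum $d := \inf_{(u,v)\in \Uad \times \Vad} \mathcal{J}(\phi,\sigma,u,v)$ is finite and we may fix a minimising sequence $\{(u_n, v_n)\}_n \subset \Uad \times \Vad$. Let $(\phi_n, \mu_n, \sigma_n)\in \mathbb{X}$ be the unique strong solution to \eqref{eq:phi2}--\eqref{ic2} associated with $(u_n, v_n)$, whose existence is guaranteed by Theorem \ref{thm:strongsols}.

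Next, I would extract convergent subsequences. Since $\Uad$ is bounded in $L^\infty(Q_T) \cap H^1(0,T;H)$ and $\Vad$ is bounded in $L^\infty(Q_T)$, there exist $\ub \in L^\infty(Q_T) \cap H^1(0,T;H)$ and $\vb \in L^\infty(Q_T)$ such that, up to a subsequence,
\begin{align*}
    & u_n \rightharpoonup \ub \quad \text{in } H^1(0,T;H), \qquad u_n \weakstar \ub \quad \text{in } L^\infty(Q_T), \\
    & v_n \weakstar \vb \quad \text{in } L^\infty(Q_T).
\end{align*}
Because $\Uad$ and $\Vad$ are convex and closed in the respective topologies (box constraints and a norm-ball condition), they are weakly/weakly-$*$ closed; hence $(\ub,\vb) \in \Uad \times \Vad$. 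By Theorem \ref{thm:strongsols} and hypothesis \ref{C3}, the states $(\phi_n,\mu_n,\sigma_n)$ are uniformly bounded in $\mathbb{X}$. This allows me to extract a further subsequence converging to some $(\phib,\mub,\sigmab) \in \mathbb{X}$ in the corresponding weak/weak-$*$ topologies.

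The crucial step is passing to the limit in the nonlinear terms $P(\phi_n)(\sigma_n + \chi(1-\phi_n) - \mu_n)$, $\hh(\phi_n)u_n$, and $F'(\phi_n)$. For this, I exploit Aubin--Lions--Simon compactness. From the uniform bound $\phi_n \in W^{1,\infty}(0,T;H) \cap L^\infty(0,T;H^2(\Omega))$ and the compact embedding $H^2(\Omega) \hookrightarrow \hookrightarrow \mathcal{C}^0(\bar\Omega)$, I get that $\phi_n \to \phib$ strongly in $\mathcal{C}^0([0,T];\mathcal{C}^0(\bar\Omega))$; likewise, $\sigma_n \to \sigmab$ strongly in $\mathcal{C}^0([0,T];H)$ and in $L^2(0,T;V)$. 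The uniform separation property \eqref{separation} guarantees that $\phi_n, \phib$ take values in $[-s^*, s^*] \subset (-l,l)$, so by continuity of $F', P, \hh$ on this compact set, the compositions converge uniformly. Combined with the weak convergence of $\mu_n$ in $L^2(0,T;W)$ and of $u_n, v_n$, this suffices to pass to the limit in the variational form of \eqref{eq:phi2}--\eqref{ic2}, showing that $(\phib, \mub, \sigmab)$ is the strong solution associated with $(\ub, \vb)$. The initial conditions are preserved since the convergences above are continuous in time at $t=0$.

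Finally, I conclude by weak lower semicontinuity. The terms in $\mathcal{J}$ involving $\phi(T), \sigma(T), \phi, \sigma$ pass to the limit by the strong convergence established above; the terms $\frac{\alpha_u}{2}\norm{u_n}^2_{L^2(Q_T)}$ and $\frac{\beta_v}{2}\norm{v_n}^2_{L^2(Q_T)}$ are continuous convex functionals, hence weakly lower semicontinuous with respect to the weak/weak-$*$ convergences of $u_n, v_n$ in $L^2(Q_T)$. Therefore
\[ \mathcal{J}(\phib, \sigmab, \ub, \vb) \le \liminf_{n \to \infty} \mathcal{J}(\phi_n, \sigma_n, u_n, v_n) = d, \]
which, combined with $(\ub,\vb) \in \Uad \times \Vad$, yields $\mathcal{J}(\phib,\sigmab,\ub,\vb) = d$, proving optimality. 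The main obstacle is ensuring enough compactness of $\phi_n$ to handle $F'(\phi_n)$ together with the coupling to $\mu_n$; this is precisely where the separation property \eqref{separation} and the uniform $L^\infty(0,T;H^2(\Omega))$ bound on $\phi_n$ become indispensable.
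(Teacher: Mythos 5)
Your proposal is correct and follows essentially the same route as the paper: a minimising sequence, Banach--Alaoglu and the convexity/closedness of $\Uad \times \Vad$ for the controls, the uniform bound \eqref{strongnorms:est} plus Aubin--Lions--Simon compactness and the separation property \eqref{separation} to pass to the limit in the nonlinearities and identify $(\phib,\mub,\sigmab)=\mathcal{S}(\ub,\vb)$, and finally weak lower semicontinuity of $\mathcal{J}$ to conclude optimality. The only cosmetic difference is that the paper obtains the uniform convergence of $\phi_n$ via $\mathcal{C}^0([0,T];H^s(\Omega))$ with $s>3/2$ rather than directly through the compact embedding $H^2(\Omega)\hookrightarrow\mathcal{C}^0(\bar\Omega)$, which changes nothing of substance.
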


\begin{proof}
	Let $\{ (u_n, v_n) \}_{n\in\N} \subset \Uad \times \Vad$ be a minimising sequence such that
	\begin{equation}
		0 \le \inf_{(u,v) \,\in \,\Uad \times \Vad} \, \mathcal{J}(\phi, \sigma, u, v) = \lim_{n \to + \infty} \mathcal{J}(\phi_n, \sigma_n, u_n, v_n),
	\end{equation}
	where $(\phi_n, \mu_n, \sigma_n)$ are the solutions to \eqref{eq:phi2}--\eqref{ic2} associated to $(u_n, v_n)$, with the regularities given by Theorem \ref{thm:strongsols}.
	
	Since $\{ (u_n, v_n) \}_{n\in\N} \subset \Uad \times \Vad$, we have that $\{ u_n \}$ is uniformly bounded in $L^\infty(Q_T) \cap H^1(0,T;H)$ and $\{v_n\}$ is uniformly bounded in $L^\infty(Q_T)$, therefore, by the Banach-Alaouglu theorem, we can deduce that there exists $(\ub, \vb) \in (L^\infty(Q_T) \cap H^1(0,T;H)) \times L^\infty(Q_T)$ such that, up to a subsequence,
	\begin{align*}
		& u_n \weakstar \ub \quad \text{weakly star in } L^\infty(Q_T) \cap H^1(0,T;H), \\
		& v_n \weakstar \vb \quad \text{weakly star in } L^\infty(Q_T).
	\end{align*}
	Moreover, since $\Uad \times \Vad$ is convex and closed in the larger space $H^1(0,T;H) \times L^2(Q_T)$, it is also weakly sequentially closed and thus $(\ub, \vb) \in \Uad \times \Vad$. 
	
	Now, consider the solutions $(\phi_n, \mu_n, \sigma_n) \in \mathbb{X}$ corresponding to $(u_n, v_n)$ for every $n\in\N$, then, by the uniform estimate \eqref{strongnorms:est}, we have that these solutions are uniformly bounded with respect to $n$ in the spaces of strong solutions, given by Theorem \ref{thm:strongsols}. Therefore, again by Banach-Alaoglu, we can say that, up to a subsequence,
	\begin{align*}
		& \phi_n \weakstar \phib \quad \text{weakly star in } W^{1,\infty}(0,T;H) \cap H^1(0,T;H^2(\Omega)) \cap L^\infty(0,T;H^2(\Omega)), \\
		& \mu_n \weakstar \mub \quad \text{weakly star in } L^\infty(0,T;W), \\ 
		& \sigma_n \weakstar \sigmab \quad \text{weakly star in } H^1(0,T;H) \cap L^\infty(0,T,V) \cap L^2(0,T;H^2(\Omega)). 
	\end{align*}
	In particular, by the compact embeddings of Aubin-Lions-Simon (see \cite[Section 8, Corollary 4]{S1986}), it follows that $\phi_n \to \phib$ strongly in $\mathcal{C}^0([0,T], H^s(\Omega))$ for any $0<s<2$, which implies that $\phi_n \to \phib$ strongly in $\mathcal{C}^0(\overline{Q_T})$, since $H^s(\Omega) \hookrightarrow \mathcal{C}^0(\overline{Q_T})$ if $s > 3/2$ for $N=3$. In particular, by uniform convergence, $\phib$ still satisfies the strict separation property. This fact, by the continuity of the functions $P$, $\hh$ and $F'$, implies that 
	\[ P(\phi_n) \to P(\phib), \, F'(\phi_n) \to F(\phib), \, \hh(\phi_n) \to \hh(\phib) \quad \text{strongly in } \mathcal{C}^0(\overline{Q_T}). \]
	Moreover, in the same way, we also have that $\sigma_n \to \sigmab$ strongly in $\mathcal{C}^0([0,T], H)$. Therefore, by using the weak and strong convergences written above, it is easy to show that, starting from the equations \eqref{eq:phi2}--\eqref{ic2} satisfied by $(\phi_n, \mu_n, \sigma_n)$ with respect to $(u_n,v_n)$, also the limit functions $(\phib, \mub, \sigmab) \in \mathbb{X}$ satisfy \eqref{eq:phi2}--\eqref{ic2} with respect to $(\ub, \vb)$. Then, we can infer that 
	\[ \inf_{(u,v) \,\in \,\Uad \times \Vad} \, \mathcal{J}(\phi, \sigma, u, v)  \le \mathcal{J}(\phib, \sigmab, \ub, \vb).  \]
	
	Now, we observe that the functional $\mathcal{J}$, if defined on the larger space $H^1(0,T;V,V^*)^2 \times L^2(Q_T)^2$, where $H^1(0,T;V,V^*) = H^1(0,T;V^*) \cap L^2(0,T;V) \hookrightarrow \mathcal{C}^0([0,T], H)$, is strongly continuous and convex, therefore it is weakly lower-semicontinuous with respect to this weaker topology. Consequently, we can deduce that
	\[ \mathcal{J}(\phib, \sigmab, \ub, \vb) \le \liminf_{n \to + \infty} \mathcal{J}(\phi_n, \sigma_n, u_n, v_n) =  \inf_{(u,v) \,\in \,\Uad \times \Vad} \, \mathcal{J}(\phi, \sigma, u, v)  \le \mathcal{J}(\phib, \sigmab, \ub, \vb), \]
	which means that $(\ub, \vb) \in \Uad \times \Vad$ is an optimal control. This concludes the proof of Theorem \ref{thm:excont}.
\end{proof}

\subsection{Linearised system}

As an \emph{ansatz} for the Fréchet-derivative of the control-to-state operator, which maps any $(u,v) \in  \Uad \times \Vad$ into the corresponding solution of the state system, we now start studying the linearised version of our system.
Indeed, we fix an optimal state $(\phib, \mub, \sigmab) \in \mathbb{X}$ corresponding to $(\ub, \vb) \in \Uad \times \Vad$ and linearise near $(\ub, \vb)$: 
\[ \phi = \phib + \xi, \, \mu = \mub + \eta, \, \sigma = \sigmab + \rho, \, u = \ub + h, \, v = \vb + k,  \]
with $h \in L^\infty(Q_T) \cap H^1(0,T;H)$ and $k \in L^\infty(Q_T)$. Then, by approximating the non-linearities at the first order of their Taylor expansion, we see that $(\xi, \eta, \rho)$ satisfy the equations:
\begin{align}
	& \partial_t \xi = \Delta \eta + P'(\phib) (\sigmab + \chi (1-\phib) - \mub) \xi + P(\phib) (\rho - \chi \xi - \eta) \nonumber \\
	& \qquad \quad - \hh'(\phib) \ub \, \xi - \hh(\phib) h && \text{in } Q_T,  \label{eq:xi}\\
	& \eta = \tau \partial_t \xi + AF''(\phib) \xi + Ba \xi - BJ \ast \xi - \chi \rho && \text{in } Q_T,  \label{eq:eta} \\
	& \partial_t \rho = \Delta \rho - \chi \Delta \xi - P'(\phib) (\sigmab + \chi (1-\phib) - \mub) \xi - P(\phib) (\rho - \chi \xi - \eta) + k && \text{in } Q_T, \label{eq:rho}
\end{align}
together with boundary and initial conditions:
\begin{alignat}{2}
	& \partial_{\n} \eta = \partial_{\n} (\rho - \chi \xi) = 0 \qquad && \text{on } \partial \Omega \times (0,T), \label{bcl} \\
	& \xi(0) = 0, \quad \rho(0) = 0 \qquad && \text{in } \Omega. \label{icl}
\end{alignat}
Observe that to derive the linearised system, we needed to start from $h \in L^\infty(Q_T) \cap H^1(0,T;H)$ and $k \in L^\infty(Q_T)$, however the system actually makes sense even for $h,k \in L^2(0,T;H)$. Indeed, we will prove well-posedness in this more general case.

We have the following result about existence and uniqueness of solutions to \eqref{eq:xi}--\eqref{icl}:

\begin{theorem}
	\label{thm:linearised}
	Assume hypotheses \emph{\ref{ass:coeff}--\ref{ass:fc0}}, \emph{\ref{ass:j2}--\ref{ass:initial2}} and let $(\phib, \mub, \sigmab) \in \mathbb{X}$ be the strong solution to \eqref{eq:phi2}--\eqref{ic2}, corresponding to $(\ub, \vb) \in \Uad \times \Vad$. Then, for any $h \in L^2(0,T;H)$ and for any $k \in L^2(0,T;H)$, the linearised system \eqref{eq:xi}--\eqref{icl} admits a unique solution such that
	\begin{align*}
		& \xi \in H^1(0,T;H) \cap L^\infty(0,T; V), \\
		& \eta \in L^2(0,T;W), \\
		& \rho \in H^1(0,T;H) \cap L^\infty(0,T;V) \cap L^2(0,T;W), \\
		& \rho - \chi \xi \in L^2(0,T;W),
	\end{align*}
	which fulfils \eqref{eq:xi}--\eqref{icl} almost everywhere in the respective sets. 
	%Moreover, there exists a constant $K>0$, depending only on the parameters of the system and on $M'$ and $M$, such that the following uniform estimate holds:
%	\begin{equation}
%		\label{linearised:est}
%		\begin{split}
%		& \tau \norm{\xi}^2_{H^1(0,T;H) \cap L^\infty(0,T; V)} + \norm{\eta}^2_{L^2(0,T;W)} + \norm{\rho - \chi \xi}^2_{L^2(0,T;W)} \\ 
%		& \quad + \norm{\rho}^2_{H^1(0,T;H) \cap L^\infty(0,T,V)} \le K \left( \norm{h}^2_{L^2(Q_T)} + \norm{k}^2_{L^2(Q_T)} \right). 
%		\end{split}
%	\end{equation}	
\end{theorem}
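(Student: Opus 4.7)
The plan is to combine a Faedo--Galerkin discretisation in the spaces $W_n$ spanned by the eigenfunctions of $\mathcal{N}$ with a priori estimates that closely parallel the continuous dependence argument of Theorem \ref{thm:contdep}, exploiting the separation property \eqref{separation} of $\phib$ and the strong bounds in \eqref{strongnorms:est} to treat the coefficients $P(\phib)$, $P'(\phib)(\sigmab + \chi(1-\phib)-\mub)$, $\hh'(\phib)\ub$ and $AF''(\phib)$ as bounded (in $L^\infty(Q_T)$ for the constant factors, in $L^\infty(0,T;L^\infty(\Omega))$ for the separated ones). Since the system is linear, uniqueness will then follow immediately by testing the difference of two solutions against itself with the same estimate.

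For the main a priori estimate, I would test \eqref{eq:xi} by $\eta$ in $H$, the time derivative of \eqref{eq:eta} by $-\xi_t$, and \eqref{eq:rho} by $(\rho - \chi \xi)$. After integration by parts (using the boundary conditions \eqref{bcl}), cancellations produce terms of the form $\tfrac{1}{2}\ddt \norm{\rho}^2_H$, $\tau \norm{\xi_t}^2_H$, $\norm{\nabla \eta}^2_H$ and $\norm{\nabla(\rho-\chi\xi)}^2_H$, together with a leading-order dissipation $((AF''(\phib)+Ba)\xi_t,\xi)_H$ that can be rewritten as $\tfrac{1}{2}\ddt((AF''(\phib)+Ba)\xi,\xi)_H$ up to a remainder controlled by $\norm{\phib_t}_{L^\infty}\norm{\xi}^2_H$, which lies in $L^1(0,T)$ thanks to \eqref{phi:h1w}. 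The remaining source terms are estimated by Cauchy--Schwarz and Young's inequalities; the key lower bound $AF''(\phib)+Ba \ge c_0 > \chi^2$ from \ref{ass:fc0} and \ref{ass:flim} lets me absorb the chemotaxis cross-term $\chi(\rho,\xi)_H$ into the coercive part, exactly as in \eqref{contdep:est0}.

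To close the estimate I need an $L^2(0,T;H)$ control of $\eta$ (and not merely of $\nabla \eta$), since the mean value of $\eta$ does not satisfy an autonomous equation. I would proceed as in the proof of Theorem \ref{thm:contdep}: additionally test \eqref{eq:eta} by $\delta \eta$ with $\delta>0$ small, producing $\delta \norm{\eta}^2_H$ on the left at the price of a controlled $\delta \tau (\xi_t,\eta)_H$ term, and choose $\eps, \gamma, \delta$ as in \eqref{eq:epsdelta} so that both $\xi_t$ and $\eta$ retain positive coefficients. Gronwall's lemma, applied to the resulting integrated inequality with the $L^1(0,T)$ coefficients $1+\norm{\sigmab+\chi(1-\phib)-\mub}^2_\infty + \norm{\phib_t}^2_\infty + \norm{\ub}^2_\infty$, then yields
\[
\tau\norm{\xi}_{H^1(0,T;H)\cap L^\infty(0,T;V)} + \norm{\eta}_{L^2(0,T;V)} + \norm{\rho}_{L^\infty(0,T;H)} + \norm{\nabla(\rho-\chi\xi)}_{L^2(0,T;H)} \le C\bigl(\norm{h}_{L^2(0,T;H)} + \norm{k}_{L^2(0,T;H)}\bigr).
\]
The $L^\infty(0,T;V)$ bound on $\xi$ is obtained by testing the gradient of \eqref{eq:eta} by $\nabla\xi$ and invoking \ref{ass:fc0}--\ref{ass:flim} as in \eqref{contdep:philinfv}.

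Once these bounds are available, the higher regularity follows essentially by comparison and elliptic regularity. Testing \eqref{eq:rho} by $\partial_t(\rho-\chi\xi)$ gives $\rho \in H^1(0,T;H) \cap L^\infty(0,T;V)$; then testing by $-\Delta(\rho-\chi\xi)$ and using $H^2$--elliptic regularity with the Neumann condition $\partial_\n(\rho-\chi\xi)=0$ yields $\rho-\chi\xi \in L^2(0,T;W)$, whence $\rho \in L^2(0,T;W)$ since $\xi \in L^\infty(0,T;V) \hookrightarrow L^2(0,T;V)$ and actually $\xi \in L^\infty(0,T;H^2(\Omega))$ can be upgraded if needed (though the statement only requires the $V$ bound on $\xi$). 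Finally, rewriting \eqref{eq:xi} as $\Delta \eta = \xi_t - R(\xi,\eta,\rho) + \hh(\phib)h$ with right-hand side in $L^2(0,T;H)$, together with $\partial_\n \eta = 0$, gives $\eta \in L^2(0,T;W)$. The main technical difficulty, as usual for this class of systems, is the joint balancing of the $\delta, \eps, \gamma$ constants in the first estimate so that both the viscous dissipation and the $\eta$--coercivity remain positive; I expect this to be the most delicate point, but it is completely analogous to \eqref{eq:epsdelta} in the proof of Theorem \ref{thm:contdep}. Uniqueness is then immediate from the linearity of the system and the closed estimate above applied to the difference of two solutions with $h=k=0$. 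The Galerkin limit passage is standard, using weak-* compactness and the Aubin--Lions lemma for the nonlinear coefficient terms, which however are prescribed functions of the fixed $(\phib,\mub,\sigmab)$ and require no further compactness.
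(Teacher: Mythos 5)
Your proposal follows essentially the same route as the paper's proof: the same combination of tests (\eqref{eq:xi} by $\eta$, \eqref{eq:eta} by $-\xi_t$ and additionally by $\delta\eta$, \eqref{eq:rho} by $\rho-\chi\xi$ and later by $\partial_t(\rho-\chi\xi)$, the gradient of \eqref{eq:eta} by $\nabla\xi$), the same balancing of $\eps,\gamma,\delta$ as in \eqref{eq:epsdelta}, Gronwall with the same $L^1(0,T)$ coefficients, comparison/elliptic regularity for the $W$-bounds on $\eta$ and $\rho-\chi\xi$, and uniqueness by linearity within a Galerkin scheme. Two minor remarks: the computation you describe corresponds to testing \eqref{eq:eta} itself (not its time derivative) by $-\xi_t$ — your Leibniz rewriting of $((AF''(\phib)+Ba)\xi,\xi_t)_H$ is exactly that step, as in the paper — and your inference of $\rho\in L^2(0,T;W)$ from $\rho-\chi\xi\in L^2(0,T;W)$ together with only $\xi\in L^2(0,T;V)$ does not follow (it would require $H^2$-regularity of $\xi$), although the paper's own proof likewise establishes only $\rho-\chi\xi\in L^2(0,T;W)$ at that point.
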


\begin{proof}
	We proceed with only formal estimates, which can be done rigorously through a Faedo-Galerkin discretisation scheme, with discrete spaces made of eigenvectors of the operator $\mathcal{N}$. Then, one can pass to the limit in a standard matter to recover the estimates also in the continuous case. For the first estimate, we test in $H$ equation \eqref{eq:xi} by $\eta$ and we get: 
	\begin{align*}
		(\xi_t, \eta)_H + \norm{\nabla \eta}^2_H  & = (P'(\phib) (\sigmab + \chi (1-\phib) - \mub) \xi, \eta)_H + (P(\phib)(\rho - \chi \xi - \eta), \eta)_H \\
		& \quad - (\hh'(\phib) \ub \xi, \eta)_H - (\hh(\phib) h, \eta)_H. 
	\end{align*}
	Then, by using hypothesis \ref{ass:ph2} and Cauchy-Schwarz and Young's inequality with $\eps > 0$ to be chosen later, we obtain:
	\begin{align*}
		(\xi_t, \eta)_H + \norm{\nabla \eta}^2_H  & \le \eps \norm{\eta}^2 + C_\eps \left( 1 + \norm{\sigmab + \chi (1-\phib) - \mub}^2_{\infty} + \norm{\ub}^2_{\infty} \right) \norm{\xi}^2_H \\
		& \quad + C_\eps \norm{\rho}^2_H + C_\eps \norm{h}^2_H + (P(\phib) (\rho - \chi \xi - \eta), \eta)_H, 
	\end{align*}
	where we recall the uniform bound on $\norm{\sigmab + \chi (1-\phib) - \mub}^2_{\infty} \in L^1(0,T)$, since $H^2(\Omega) \hookrightarrow L^\infty(\Omega)$, and on $\norm{\ub}^2_{\infty} \in \Lt \infty$ by \ref{ass:uv2}.
	Next, we test \eqref{eq:eta} in $H$ by $-\xi_t$, which is possible within the discretisation, and, by using Leibniz's rule for the time-derivative, we get: 
	\begin{align*}
		& -(\eta, \xi_t) + \tau \norm{\xi_t}^2_H + \frac{1}{2} \ddt \left( (AF''(\phib) +Ba) \xi, \xi \right)_H \\
		& \quad =  (AF'''(\phib) \phib_t \xi, \xi)_H + (B J \ast \xi, \xi_t)_H + \chi (\rho, \xi_t)_H.
	\end{align*}
	We further estimate the right-hand side by using H\"older's and Young's inequalities and by recalling that, by Theorem \ref{thm:strongsols}, we have the separation property for $\phib$, indeed:
	\begin{align*}
		& -(\eta, \xi_t) + \tau \norm{\xi_t}^2_H + \frac{1}{2} \ddt \left( (AF''(\phib) +Ba) \xi, \xi \right)_H \\
		& \quad \le A\norm{F'''(\phib)}_{\infty} \norm{\phib_t}_{\infty} \norm{\xi}^2_H +  B a^* \norm{\xi}_H \norm{\xi_t}_H + \chi (\rho, \xi_t)_H \\
		& \quad \le \frac{\tau}{2} \norm{\xi_t}^2_H + C (1 + \norm{\phib_t}_{\infty}) \norm{\xi}^2_H + \chi (\rho, \xi_t)_H,
	\end{align*}
	where $\phib_t \in \LT 2 {\Hx 2} \hookrightarrow \LT 2 {\Lx \infty}$ by Theorem \ref{thm:strongsols}.
	Moreover, we test \eqref{eq:rho} in $H$ by $\rho - \chi \xi$ and, by similar techniques, we infer that
	\begin{align*}
		& \mezzo \ddt \norm{\rho}^2_H + \norm{\nabla (\rho - \chi \xi)}^2_H - \chi (\rho_t, \xi)_H \\
		& \quad = (P'(\phib) (\sigmab + \chi (1-\phib) - \mub) \xi, \rho - \chi \xi)_H - (P(\phib)(\rho - \chi \xi - \eta), \rho - \chi \xi)_H + (k, \rho - \chi \xi)_H \\
		& \quad \le C \norm{\rho}^2_H + C (1 +  \norm{\sigmab + \chi (1-\phib) - \mub}^2_\infty) \norm{\xi}^2_H + C \norm{k}^2_H - (P(\phib)(\rho - \chi \xi - \eta), \rho - \chi \xi)_H. 
	\end{align*}
	Then, by summing up all three inequalities, after cancellations, we get:
	\begin{align}
	\label{lin:enest}
		& \mezzo \ddt G(t) + \frac{\tau}{2} \norm{\xi_t}^2_H + \norm{\nabla \eta}^2_H + \norm{\nabla (\rho - \chi \xi)}^2_H + \overbrace{ (P(\phib)(\rho - \chi \xi - \eta), \rho - \chi \xi - \eta)_H }^{\ge 0} \nonumber \\
		& \quad \le \eps \norm{\eta}^2_H + C_\eps \left( 1 +  \norm{\sigmab + \chi (1-\phib) - \mub}^2_\infty + \norm{\phib_t}_{\infty} + \norm{\ub}^2_{\infty} \right) \norm{\xi}^2_H \\ 
		& \qquad + C_\eps \norm{\rho}^2_H + C_\eps \norm{h}^2_H + C_\eps \norm{k}^2_H, \nonumber
	\end{align}
	where 
	\[ G(t) = ((AF''(\phib) +Ba) \xi, \xi )_H + \norm{\rho}^2_H - \chi (\rho, \xi)_H. \]
	Observe that, by hypothesis \ref{ass:fc0} and Young's inequality, we can infer that
	\[ G(t) \ge (c_0 - \chi^2) \norm{\xi}^2_H + \frac{3}{4} \norm{\rho}^2_H, \]
	moreover, by the initial conditions, we know that $G(0) =0$.
	Finally, we test again \eqref{eq:eta} in $H$ by $\delta \eta$ to obtain:
	\[ \delta \norm{\eta}^2_H = \delta \tau (\xi_t, \eta)_H + \delta (AF''(\phib) \xi, \eta)_H + \delta (Ba \xi, \eta)_H - \delta (B J \ast \xi, \eta)_H - \delta \chi (\rho, \eta)_H. \]
	By using Remark \ref{F:derivatives}, Cauchy-Schwarz and Young's inequalities with $\gamma > 0$ to be chosen later and the same $\eps > 0$ used before, we can estimate
	\[ \delta \norm{\eta}^2_H \le C_\gamma \delta \tau \norm{\xi_t}^2_H + \gamma \delta \tau \norm{\eta}^2_H + \eps \norm{\eta}^2_H + C_{\delta, \eps} \norm{\xi}^2_H + C_{\delta, \eps} \norm{\rho}^2_H.  \]
	At this point, we can sum this last inequality to \eqref{lin:enest}, integrate everything over $(0,t)$, for any $t \in (0,T)$, and infer that
	\begin{equation*}
		\begin{split}
		& \mezzo (c_0 - \chi^2) \norm{\xi(t)}^2_H + \frac{3}{8} \norm{\rho(t)}^2_H + \left( \frac{\tau}{2} - C_\gamma \delta \tau \right) \int_0^t \norm{\xi_t}^2_H \, \de s \\
		& \qquad + \left( 1 - 2\eps - \gamma \delta \tau \right) \int_0^t \norm{\eta} \, \de s + \int_0^t \norm{\nabla \eta}^2_H \, \de s + \int_0^t \norm{\nabla (\rho - \chi \xi)}^2_H \, \de s \\
		& \quad \le C_{\eps, \delta} \int_0^t \norm{\rho}^2_H \, \de s + C_\eps \int_0^t \norm{h}^2_H \, \de s + C \int_0^t \norm{k}^2_H \, \de s \\
		& \qquad + \int_0^t C_{\eps, \delta} \left( 1 +  \norm{\sigmab + \chi (1-\phib) - \mub}^2_\infty + \norm{\phib_t}_{\infty} + \norm{\ub}^2_{\infty} \right) \norm{\xi}^2_H \, \de s.
		\end{split}
	\end{equation*}
	Now, exactly as in the proof of Theorem \ref{thm:contdep} (see equation \eqref{eq:epsdelta}), we can respectively choose $\eps \gs 0$, $\gamma = \gamma(\tau) \gs 0$ and then $\delta = \delta(\tau) \gs 0$ such that the constants above are strictly positive. Therefore, we can apply Gronwall's inequality and recover the estimate:
	\begin{equation}
	\label{lin:gronwall}
		\begin{split}
		& \tau \norm{\xi}^2_{H^1(0,T;H)} + \norm{\xi}^2_{L^\infty(0,T;H)} + \norm{\rho}^2_{L^\infty(0,T;H)} + \norm{\eta}^2_{L^2(0,T;V)} \\ 
		& \quad + \norm{\nabla (\rho - \chi \xi)}^2_{L^2(0,T;H)} \le C_\tau \left( \norm{h}^2_{L^2(Q_T)} + \norm{k}^2_{L^2(Q_T)} \right), 
		\end{split}
	\end{equation}
	with $C_\tau > 0$, depending only on the parameters of the system and on $\tau>0$. By reasoning as in Remark \ref{rmk:epsdelta}, this dependence on $\tau$ can be avoided, if one additionally assumes that the range of values that $\tau$ can assume is bounded from above. For simplicity, from now on we will remove the explicit dependence on $\tau$ from the constants, keeping in mind that the only constant that could depend on $\tau$ is the one in \eqref{lin:gronwall}.

	Now, we can test \eqref{eq:eta} in $H$ by $- \Delta \xi$, which makes sense in the Galerkin discretisation, and by using integration by parts formulae, without any extra boundary terms, due to the discrete spaces, we have:
	\begin{align*}
		& (\nabla \eta, \nabla \xi)_H = \frac{\tau}{2} \ddt \norm{\nabla \xi}^2_H + (AF'''(\phib) \nabla \phib \, \xi, \nabla \xi)_H + A(F''(\phib) \nabla \xi, \nabla \xi)_H + (Ba \nabla \xi, \nabla \xi)_H \\
		& \qquad + (B \nabla a \, \xi, \nabla \xi)_H - (B \nabla J \ast \xi, \nabla \xi)_H - \chi (\nabla \rho, \nabla \xi)_H.
	\end{align*}
	In addition, if we add $\chi^2 (\nabla \xi, \nabla \xi)_H$ on both sides of the equality, then, by rearranging the terms and by using hypothesis \ref{ass:fc0} and Cauchy-Schwarz and Young's inequalities, we infer that
	\begin{equation}
		\label{lin:est1}
		\begin{split}
		& \frac{\tau}{2} \ddt \norm{\nabla \xi}^2_H + (c_0 - \chi^2) \norm{\nabla \xi}^2_H \\
		& \quad \le C \norm{\nabla \xi}^2_H + C \norm{\nabla \eta}^2_H + C \norm{\xi}^2_H + C \norm{\nabla (\rho - \chi \xi)}^2_H + (AF'''(\phib) \nabla \phib \, \xi, \nabla \xi)_H.
		\end{split}
	\end{equation}
	In order to estimate the last term, we use Remark \ref{F:derivatives}, H\"older's inequality, \eqref{gn:ineq} and the generalised Young's inequality as follows:
	\begin{align*}
		& (AF'''(\phib) \nabla \phib \, \xi, \nabla \xi)_H \le A \norm{F'''(\phib)}_{\infty} \norm{\nabla \phi}_{6} \norm{\xi}_{3} \norm{\nabla \xi}_H \\ 
		& \quad \le C_T \norm{\nabla \phi}_{6} \norm{\xi}^{1/2}_H \norm{\nabla \xi}^{3/2}_H \le C \norm{\nabla \xi}^2_H + C  \underbrace{\norm{\nabla \phi}^4_{6}}_{\in \, \Lx \infty} \norm{\xi}^2_H.
	\end{align*}
	Then, going back to \eqref{lin:est1} and integrating on $(0,t)$ for any $t \in (0,T)$, we arrive at
	\[ \frac{\tau}{2} \norm{\nabla \xi(t)}^2_H \le C \int_0^t \left( \norm{\nabla \xi}^2_H + \norm{\nabla \eta}^2_H + \norm{\nabla (\rho - \chi \xi)}^2_H + \left( 1 + \norm{\nabla \phi}^4_{6} \right) \norm{\xi}^2_H \right) \, \de s, \]
	and, by using \eqref{lin:gronwall} together with Gronwall's inequality, we deduce that 
	\begin{equation}
		\label{xi:linfv}
		\tau \norm{\xi}^2_{\LT \infty V} \le C \left( \norm{h}^2_{L^2(Q_T)} + \norm{k}^2_{L^2(Q_T)} \right).
	\end{equation}
	Moreover, since $\rho - \chi \xi \in \LT 2 V$ by \eqref{lin:gronwall}, we also infer that 
	\[ \norm{\rho}^2_{\LT 2 V} \le C \left( \norm{h}^2_{L^2(Q_T)} + \norm{k}^2_{L^2(Q_T)} \right). \]
	Finally, we test \eqref{eq:rho} in $H$ by $\partial_t (\rho - \chi \xi)$, which is still possible within the discretisation, and by usual techniques we get: 
	\begin{align*}
		& \norm{\rho_t}^2_H + \mezzo \ddt \norm{\nabla (\rho - \chi \xi)}^2_H = \chi (\rho_t, \xi_t)_H - (P'(\phib) (\sigmab + \chi (1-\phib) - \mub) \xi, \rho_t - \chi \xi_t)_H \\ 
		& \qquad + (P(\phib)(\rho - \chi \xi - \eta), \rho_t - \chi \xi_t)_H + (k, \rho_t - \chi \xi_t)_H \\
		& \quad \le \mezzo \norm{\rho_t}^2_H + C \norm{\xi_t}^2_H + C \norm{\rho}^2_H + C \norm{\eta}^2_H + C (1 + \norm{\sigmab + \chi (1-\phib) - \mub}^2_\infty ) \norm{\xi}^2_H + \norm{k}^2_H.
	\end{align*}
	Then, by integrating on $(0,t)$ for any $t \in (0,T)$ and by using Gronwall's lemma and \eqref{lin:gronwall}, we infer that
	\[ \norm{\rho}^2_{\HT 1 H} + \norm{\rho - \chi \xi}^2_{\LT \infty V} \le C \left( \norm{h}^2_{L^2(Q_T)} + \norm{k}^2_{L^2(Q_T)} \right) \] 
	and, by \eqref{xi:linfv}, we also get
	\[ \norm{\rho}^2_{\LT \infty V} \le C \left( \norm{h}^2_{L^2(Q_T)} + \norm{k}^2_{L^2(Q_T)} \right). \] 
	Before concluding, observe that, by comparison in \eqref{eq:xi} and \eqref{eq:rho}, we can also deduce that
	\[ \norm{\eta}^2_{L^2(0,T;W)} + \norm{\rho - \chi \xi}^2_{L^2(0,T;W)} \le C \left( \norm{h}^2_{L^2(Q_T)} + \norm{k}^2_{L^2(Q_T)} \right). \]
	Indeed, due to the regularity of strong solutions given by Theorem \ref{thm:strongsols}, one just has to notice that  
	\begin{align*}
		& \norm{P'(\phib) (\sigmab + \chi (1-\phib) - \mub) \xi}^2_{L^2(0,T;H)} \\
		& \quad \le P'_\infty \norm{\xi}^2_{L^\infty(0,T;H)} \underbrace{\int_0^T \norm{\sigmab + \chi (1-\phib) - \mub}^2_\infty \, \de t}_{\le C} \le  C \left( \norm{h}^2_{L^2(Q_T)} + \norm{k}^2_{L^2(Q_T)} \right).
	\end{align*} 
	At this point, with all these uniform estimates, it is easy to pass to the limit in the discretisation framework and prove the existence of a strong solution with the prescribed regularities. Moreover, since the system is linear, it is straightforward to prove the uniqueness of the solution from the energy estimate \eqref{lin:gronwall}. This concludes the proof of Theorem \ref{thm:linearised}.
\end{proof}

\subsection{Differentiability of the control-to-state operator}

We now want to study the \emph{control-to-state operator} $\mathcal{S}$, which associates to any control $(u,v) \in \Uad \times \Vad$ the corresponding solution of the system \eqref{eq:phi2}--\eqref{ic2}. We introduce the following space:
\begin{align*}
	\mathbb{Y} & := (H^1(0,T;H) \cap L^\infty(0,T;H^2(\Omega))) \times L^2(0,T;W) \\
	& \quad  \times (H^1(0,T;H) \cap L^\infty(0,T,V) \cap L^2(0,T;H^2(\Omega))). 
\end{align*}
Observe that the space of strong solutions $\mathbb{X}$ is continuously embedded into $\mathbb{Y}$, which is exactly the space where we proved the continuous dependence estimates. Indeed, from Theorem \ref{thm:strongsols} and Theorem \ref{thm:contdep} we respectively know that
\[ \mathcal{S}: (L^\infty(Q_T) \cap H^1(0,T;H)) \times L^\infty(Q_T) \to \mathbb{X} \quad \text{is \emph{well-defined} and}  \]
\[ \mathcal{S}: (L^\infty(Q_T) \cap H^1(0,T;H)) \times L^\infty(Q_T) \to \mathbb{Y} \quad \text{is \emph{locally Lipschitz-continuous}.}  \]
Now, for $R>0$, we fix an open set $\mathcal{U}_R \times \mathcal{V}_R \subseteq (L^\infty(Q_T) \cap H^1(0,T;H)) \times L^\infty(Q_T)$ such that $\Uad \times \Vad \subseteq \mathcal{U}_R \times \mathcal{V}_R$. Indeed, by hypothesis \ref{C3}, we can take:
\begin{align*}
	& \mathcal{U}_R := \{ u \in L^{\infty}(Q_T) \cap H^1(0,T;H) \mid \norm{u}_{L^\infty(Q_T)} \le M' + R, \norm{u}_{H^1(0,T;H)} \le M + R \} \\
	& 	\mathcal{V}_R := \{ v \in L^{\infty}(Q_T) \mid \norm{v}_{L^\infty(Q_T)} \le M' + R \}.
\end{align*}
Note that, in $\mathcal{U}_R \times \mathcal{V}_R$, the continuous dependence estimate of Theorem \ref{thm:contdep} holds with $K$ depending only on $R$ and the fixed data of the system.
Our aim is to show that $\mathcal{S}: \mathcal{U}_R \times \mathcal{V}_R \to \mathbb{W}$ is also \emph{Fréchet-differentiable} in the larger space:
\[ \mathbb{W} = (H^1(0,T;H) \cap L^\infty(0,T;V)) \times L^2(0,T;V) \times (H^1(0,T;H) \cap L^\infty(0,T;V)). \]
Indeed, we can prove the following theorem:

\begin{theorem}
	\label{thm:frechet}
	Assume hypothesis \emph{\ref{ass:coeff}--\ref{ass:fc0}}, \emph{\ref{ass:j2}--\ref{ass:initial2}} and \emph{\ref{C4}}. Then $\mathcal{S}: \mathcal{U}_R \times \mathcal{V}_R \to \mathbb{W}$ is Fréchet-differentiable, i.e. for any $(\ub,\vb) \in \mathcal{U}_R \times \mathcal{V}_R$ there exists a unique Fréchet-derivative $D\mathcal{S}(\ub, \vb) \in \mathcal{L}((L^\infty(Q_T) \cap H^1(0,T;H)) \times L^\infty(Q_T), \mathbb{W})$ such that:
	\begin{equation}
		\label{frechet:diff}
		\frac{ \norm{ \mathcal{S}(\ub+h, \vb+k) - \mathcal{S}(\ub, \vb) - D\mathcal{S}(\ub, \vb)(h,k)  }_{\mathbb{W}} }{ \norm{(h,k)}_{(L^2(Q_T))^2} } \to 0 \quad \text{as } \norm{(h,k)}_{(L^2(Q_T))^2} \to 0.
	\end{equation}
	Moreover, for any $(h,k) \in (L^\infty(Q_T) \cap H^1(0,T;H)) \times L^\infty(Q_T)$, the Fréchet-derivative at $(h,k)$, which we denote by $D\mathcal{S}(\ub, \vb)(h,k)$, is defined as the solution $(\xi, \eta, \rho)$ to the linearised system \eqref{eq:xi}--\eqref{icl} corresponding to $(\phib, \mub, \sigmab) = \mathcal{S}(\ub, \vb)$, with data $(h,k)$. 
\end{theorem}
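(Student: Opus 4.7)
The plan is the standard one for Fr\'echet-differentiability in optimal control problems for evolutionary PDEs. Fix $(\ub, \vb) \in \mathcal{U}_R \times \mathcal{V}_R$ with corresponding state $(\phib, \mub, \sigmab) = \mathcal{S}(\ub, \vb)$. For any $(h,k) \in (L^\infty(Q_T) \cap H^1(0,T;H)) \times L^\infty(Q_T)$ with sufficiently small norm so that $(\ub+h, \vb+k)$ still belongs to $\mathcal{U}_R \times \mathcal{V}_R$, set $(\phi^h, \mu^h, \sigma^h) := \mathcal{S}(\ub+h, \vb+k)$, and let $(\xi, \eta, \rho)$ be the unique solution to the linearised system \eqref{eq:xi}--\eqref{icl} with data $(h,k)$ provided by Theorem \ref{thm:linearised}. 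Inspection shows that the candidate operator $D\mathcal{S}(\ub, \vb) : (h,k) \mapsto (\xi, \eta, \rho)$ is linear, and the energy estimates from the proof of Theorem \ref{thm:linearised} show it is bounded from $L^2(0,T;H)^2$ into $\mathbb{W}$, hence a fortiori from the ambient control space. It remains to prove that the remainder
\begin{equation*}
(\psi, \theta, \omega) := (\phi^h - \phib - \xi, \; \mu^h - \mub - \eta, \; \sigma^h - \sigmab - \rho)
\end{equation*}
satisfies $\norm{(\psi, \theta, \omega)}_{\mathbb{W}} = o(\norm{(h,k)}_{L^2(Q_T)^2})$ as $\norm{(h,k)}_{L^2(Q_T)^2} \to 0$.

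Setting $\delta\phi := \phi^h - \phib$, $\delta\sigma := \sigma^h - \sigmab$ and $\delta\mu := \mu^h - \mub$, the continuous-dependence bound of Theorem \ref{thm:contdep}, applied inside $\mathcal{U}_R \times \mathcal{V}_R$, gives
\begin{equation*}
\norm{\delta\phi}_{H^1(0,T;H) \cap L^\infty(0,T;H^2(\Omega))} + \norm{\delta\sigma}_{H^1(0,T;H) \cap L^\infty(0,T;V) \cap L^2(0,T;H^2(\Omega))} + \norm{\delta\mu}_{L^2(0,T;W)} \le C \norm{(h,k)}_{L^2(Q_T)^2}.
\end{equation*}
I would then subtract the state system at $(\ub, \vb)$ and the linearised system from the state system at $(\ub+h, \vb+k)$, rewriting the nonlinearities via the second-order Taylor formulas with integral remainders
\begin{equation*}
P(\phi^h) = P(\phib) + P'(\phib)\delta\phi + r_P, \quad \hh(\phi^h) = \hh(\phib) + \hh'(\phib)\delta\phi + r_\hh, \quad F'(\phi^h) = F'(\phib) + F''(\phib)\delta\phi + r_F,
\end{equation*}
with $\abs{r_P}, \abs{r_\hh}, \abs{r_F} \le C \abs{\delta\phi}^2$ pointwise. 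A direct computation then shows that $(\psi, \theta, \omega)$ solves exactly the linearised system \eqref{eq:xi}--\eqref{icl} with null initial data, but with three additional source terms: $g_1$ in the equation for $\psi$, $g_2$ in the equation for $\omega$, and an algebraic source $g_3 = A r_F$ in the equation for $\theta$. For instance,
\begin{equation*}
g_1 = (P(\phi^h) - P(\phib))(\delta\sigma - \chi\delta\phi - \delta\mu) + r_P(\sigmab + \chi(1-\phib) - \mub) - r_\hh \ub - (\hh(\phi^h) - \hh(\phib))\, h,
\end{equation*}
with an analogous (slightly simpler) expression for $g_2$ that lacks the $\hh$-contributions.

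Every term in $g_1, g_2, g_3$ is quadratic in $(\delta\phi, \delta\sigma, \delta\mu, h)$. Using the embedding $H^2(\Omega) \hookrightarrow L^\infty(\Omega)$, the strong regularity of the reference state from Theorem \ref{thm:strongsols} (so that, e.g., $\sigmab + \chi(1-\phib) - \mub \in L^2(0,T;L^\infty(\Omega))$), H\"older's inequality, and the continuous-dependence bound above, each of $g_1, g_2, g_3$ is controlled in $L^2(0,T;H)$ by $C \norm{(h,k)}_{L^2(Q_T)^2}^2$; the term $(\hh(\phi^h) - \hh(\phib))\,h$ is similarly handled through $\norm{\delta\phi}_{L^\infty(Q_T)} \norm{h}_{L^2(Q_T)}$. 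Repeating verbatim the energy argument of Theorem \ref{thm:linearised} on the remainder system and absorbing $g_1, g_2, g_3$ on the right-hand side by Cauchy-Schwarz and Young's inequalities (exactly as the sources $-\hh(\phib)h$ and $k$ were absorbed there), I would deduce
\begin{equation*}
\norm{(\psi, \theta, \omega)}_{\mathbb{W}}^2 \le C \big( \norm{g_1}_{L^2(0,T;H)}^2 + \norm{g_2}_{L^2(0,T;H)}^2 + \norm{g_3}_{L^2(0,T;H)}^2 \big) \le C \norm{(h,k)}_{L^2(Q_T)^2}^4,
\end{equation*}
which immediately gives \eqref{frechet:diff}.

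The crux of the argument --- and essentially the only non-routine point --- is the Taylor expansion of $F'$: writing $r_F = (\delta\phi)^2 \int_0^1 (1-s) F'''(\phib + s \delta\phi) \, \de s$ is legitimate only because both $\phib$ and $\phi^h$ enjoy the strict separation property \eqref{separation} of Theorem \ref{thm:strongsols}, with a common separation constant depending only on $R$ (since both $(\ub,\vb)$ and $(\ub+h, \vb+k)$ lie in $\mathcal{U}_R \times \mathcal{V}_R$); convex combinations $\phib + s \delta\phi$ then lie in a compact subset of $(-l,l)$ on which $F'''$ is uniformly bounded, in the spirit of Remark \ref{F:derivatives}. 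A minor additional technicality is the handling of the algebraic source $g_3 = A r_F$ when testing the equation for $\theta$ by $-\partial_t \psi$ in the energy estimate: this produces an extra term $-(Ar_F, \partial_t\psi)_H$, which is absorbed by Young's inequality at the cost of $\norm{r_F}_{L^2(0,T;H)}^2 \le C \norm{\delta\phi}_{L^\infty(Q_T)}^2 \norm{\delta\phi}_{L^2(Q_T)}^2 = O(\norm{(h,k)}_{L^2(Q_T)^2}^4)$, which does not alter the quadratic order of the remainder.
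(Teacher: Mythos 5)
Your proposal follows essentially the same route as the paper's proof: the same remainder decomposition $(\psi,\zeta,\theta)$, second-order Taylor expansions of $F'$, $P$, $\hh$ with integral remainders made legitimate by the uniform strict separation property, quadratic $L^2(0,T;H)$ bounds on the resulting sources via the continuous dependence estimate of Theorem \ref{thm:contdep}, and energy estimates patterned on Theorem \ref{thm:linearised}, giving a remainder of order $\norm{(h,k)}^4_{(L^2(Q_T))^2}$ and hence \eqref{frechet:diff} with exponent $s=4$. The only point you gloss over is the $L^\infty(0,T;V)$ estimate for $\psi$: since the quadratic remainder sits in the algebraic $\mu$-equation, that estimate forces you to differentiate $R_1^h(\phi-\phib)^2$ in space (there is no $\norm{\Delta\psi}_H$ term available to integrate by parts against), so one also needs the bound $\norm{\nabla R_1^h}_{L^\infty(0,T;L^6(\Omega))} \le C_\Lambda$ (the paper's \eqref{remainder2}), which exploits $F \in \mathcal{C}^4((-l,l))$ together with the uniform $L^\infty(0,T;H^2(\Omega))$ bounds — a technical but genuinely needed ingredient beyond a verbatim repetition of the linearised-system argument.
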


\begin{remark}
	Note that, by Theorem \ref{thm:linearised}, $D\mathcal{S}(\ub, \vb)$ as defined above actually belongs to the space of continuous linear operators $\mathcal{L}((L^\infty(Q_T) \cap H^1(0,T;H)) \times L^\infty(Q_T), \mathbb{W})$. Observe also that \eqref{frechet:diff} shows Fréchet-differentiability with respect to the $L^2(Q_T)$ norm, but clearly, since $(L^\infty(Q_T) \cap H^1(0,T;H)) \times L^\infty(Q_T) \hookrightarrow (L^2(Q_T))^2$, this also implies Fréchet-differentiability in the correct space.
\end{remark}

\begin{proof}
	Clearly, it is sufficient to prove the result for any small enough perturbation $(h,k)$, i.e. we fix $\Lambda > 0$ and consider only perturbations such that
	\begin{equation}
		\label{hk:bound}
		\norm{(h,k)}_{(L^2(Q_T))^2} \le \Lambda.
	\end{equation} 
	Now, we fix $(\ub, \vb)$ and $(h,k)$ as above and consider
	\begin{align*}
		& (\phi, \mu, \sigma) := \mathcal{S}(\ub+h, \vb+k), \\
		& (\phib, \mub, \sigmab) := \mathcal{S}(\ub, \vb), \\
		& (\xi, \eta, \rho) \text{ as the solution to \eqref{eq:xi}--\eqref{icl} with respect to } (h,k).
	\end{align*}
	In order to show the Fréchet-differentiability, then, it is enough to show that there exists a constant $C>0$, depending only on the parameters of the system and possibly on $\Lambda$, and an exponent $s > 2$ such that
	\[ \norm{ (\phi, \mu, \sigma) - (\phib, \mub, \sigmab) - (\xi, \eta, \rho) }^2_{\mathbb{W}} \le C \norm{(h,k)}^s_{(L^2(Q_T))^2}. \]
	If we introduce the additional variables
	\begin{align*}
		& \psi := \phi - \phib - \xi \in H^1(0,T;H) \cap L^\infty(0,T; V), \\
		& \zeta  := \mu - \mub - \eta \in L^2(0,T;W), \\
		& \theta := \sigma - \sigmab - \rho \in H^1(0,T;H) \cap L^\infty(0,T;V) \cap L^2(0,T;W), \\
		& \theta - \chi \psi \in L^2(0,T;W).
	\end{align*}
	which by Theorem \ref{thm:strongsols} and Theorem \ref{thm:linearised} enjoy the regularities shown above, then this is equivalent to showing that
	\begin{equation}
		\label{frechet:aim}
		\norm{ (\psi, \zeta, \theta) }_{\mathbb{W}}^2 \le C \norm{(h,k)}^s_{(L^2(Q_T))^2}.
	\end{equation}
	Moreover, by inserting the equations solved by the variables in the definitions of $\psi$, $\zeta$ and $\theta$ and exploiting the linearity of the involved differential operators, we infer that these new variables satisfy the equations:
	\begin{alignat}{2}
		& \partial_t \psi = \Delta \zeta + Q^h - U^h \,\,\, && \text{in } Q_T,  \label{eq:psi}\\
		& \zeta = \tau \partial_t \psi + AF^h + Ba \psi - BJ \ast \psi - \chi \theta \quad && \text{in } Q_T,  \label{eq:zeta} \\
		& \partial_t \theta = \Delta \theta - \chi \Delta \psi - Q^h \quad && \text{in } Q_T, \label{eq:theta}
	\end{alignat}
	together with boundary and initial conditions:
	\begin{alignat}{2}
		& \partial_{\n} \zeta = \partial_{\n} (\theta - \chi \psi) = 0 \qquad && \text{on } \partial \Omega \times (0,T), \label{bcd} \\
		& \psi(0) = 0, \quad \theta(0) = 0 \qquad && \text{in } \Omega, \label{icd}
	\end{alignat}
	where:
	\begin{align*}
		 F^h & = F'(\phi) - F'(\phib) - F''(\phib)\xi, \\
		 U^h & = \hh(\ub + h) - \hh(\phib) \ub - \hh(\phib) - \hh(\phib) \ub \xi, \\
		 Q^h & = P(\phi)(\sigma + \chi(1-\phi) - \mu) - P(\phib)(\sigmab + \chi (1- \phib) - \mub) \\ 
		& \quad - P(\phib)(\rho - \chi \xi - \eta) - P'(\phib) (\sigmab - \chi (1-\phib) - \mub) \xi.
	\end{align*}
	Before going on, we can rewrite in a better way the terms $F^h$, $U^h$ and $Q^h$, by using the following version of Taylor's theorem with integral remainder for a real function $f \in \mathcal{C}^2$ at a point $x_0 \in \R$:
	\[ f(x) = f(x_0) + f'(x_0) (x-x_0) + \left( \int_0^1 (1-z) f''(x_0 + z(x-x_0)) \, \de z \right) (x-x_0)^2. \]
	Indeed, with straightforward calculations one can see that
	\begin{align*}
		& F^h = F''(\phib) \underbrace{ (\phi - \phib - \xi) }_{= \psi} + R_1^h (\phi - \phib)^2, \\
		& U^h = \hh'(\phib) \underbrace{ (\phi - \phib - \xi) }_{= \psi} \ub + (\hh(\phi) - \hh(\phib)) h + R_2^h (\phi - \phib)^2 \ub, 
	\end{align*}
	and also, up to adding and subtracting some additional terms, that
	\begin{align*}
		& Q^h = P(\phib) (\theta - \chi \psi - \zeta) + P'(\phib) (\sigmab + (1-\chi) \phib - \mub) \, \psi  \\
		& \qquad + (P(\phi) - P(\phib))[ (\sigma - \sigmab) - \chi (\phi - \phib) - (\mu - \mub) ] + R_3^h (\sigmab + (1-\chi) \phib - \mub) (\phi - \phib)^2,
	\end{align*}
	where
	\begin{align*}
		& R_1^h = \int_0^1 (1-z) F'''(\phib + z(\phi - \phib)) \, \de z, \\
		& R_2^h = \int_0^1 (1-z) \hh''(\phib + z(\phi - \phib)) \, \de z, \\
		& R_3^h = \int_0^1 (1-z) P''(\phib + z(\phi - \phib)) \, \de z.
	\end{align*}
	Regarding these last remainder terms, we can immediately say that there exists a constant $C>0$, depending only on the parameters, $\Lambda$ and $T$, such that
	\begin{equation}
		\label{remainder1}
		\norm{R_1^h}_{L^\infty(Q_T)}, \norm{R_2^h}_{L^\infty(Q_T)}, \norm{R_3^h}_{L^\infty(Q_T)} \le C_\Lambda.
	\end{equation}
	Indeed, by using the fact that $F \in \mathcal{C}^3$ and $\phi$, $\phib$ are bounded uniformly in $L^\infty(Q_T)$ with values in $[-s^*, s^*]$ by Theorem \ref{thm:strongsols}, we have that
	\begin{align*}
		\norm{R_1^h}_{L^\infty(Q_T)} \le \int_0^1 (1-z) \norm{F'''(\phib + z(\phi- \phib)) }_{L^\infty(Q_T)} \, dz \le C_{\Lambda} \int_0^1 (1-z) \, \de z \le C_{\Lambda}.
	\end{align*}
	The other two terms are done in an analogous way, by exploiting hypothesis \ref{C4}. Moreover, for later purposes, we also observe that
	\begin{equation}
		\label{remainder2}
		\norm{\nabla R_1^h}_{L^\infty(0,T;L^6(\Omega))} \le C_{\Lambda}.
	\end{equation}
	Indeed, by using Lebesgue's theorem to differentiate inside the integral sign, Jensen's inequality, Fubini's theorem, the embedding $H^2(\Omega) \hookrightarrow W^{1,6}(\Omega)$, $F\in \mathcal{C}^4$, the separation property and the uniform bound on $\phi$, $\phib$ in $L^\infty(0,T; H^2(\Omega))$ as before, we have:
	\begin{align*}
		\norm{\nabla R_1^h}_{6} & \le \left( \int_\Omega \left( \int_0^1 (1-z) \abs{(\nabla \phib + z (\nabla \phi - \nabla \phib) ) F^{(4)}(\phib + z (\phi - \phib))} \, \de z \right)^6 \de x  \right)^{1/6} \\
		& \le C \Bigg( \int_0^1 (1-z) \norm{ (\nabla \phib + z (\nabla \phi - \nabla \phib) ) F^{(4)}(\phib + z (\phi - \phib)) }^6_{6} \, \de z \Bigg)^{1/6}\\
		& \le C \Bigg( \int_0^1 (1-z) \underbrace{ \norm{ F^{(4)}(\phib + z (\phi - \phib)) }^6_{L^\infty(Q_T)} }_{\le C_{\Lambda}} \norm{ \nabla \phib + z (\nabla \phi - \nabla \phib) }^6_{6} \, \de z \Bigg)^{1/6} \\
		& \le C_{\Lambda} \left( 2 \norm{\phib}_{\Wx{1,6}} + \norm{\phi}_{\Wx{1,6}} \right) \\
		& \le C_{\Lambda} \left( 2 \norm{\phib}_{L^\infty(0,T;H^2(\Omega))} + \norm{\phi}_{L^\infty(0,T;H^2(\Omega))} \right) \le \bar{C}_\Lambda.
	\end{align*}
	Then, by taking the essential supremum on $(0,T)$ we deduce \eqref{remainder2}.
	
	Now we can start with a priori estimates on the system \eqref{eq:psi}--\eqref{icd}. Firstly, we test \eqref{eq:psi} in $H$ by $\zeta$ and, by using \eqref{remainder1}, Cauchy-Schwarz and Young's inequalities, the embedding $\Hx 2 \hookrightarrow \Lx \infty$ and the local Lipschitz continuity of $\hh \in \mathcal{C}^1$, we get:
	\begin{align*}
		& (\psi_t, \zeta)_H + \norm{\nabla \zeta}^2_H = (Q^h, \zeta)_H - (\hh'(\phib) \ub \psi, \zeta )_H - ((\hh(\phi) - \hh(\phib)) h, \zeta)_H + (R_2^h (\phi - \phib)^2 \ub, \zeta)_H \\
		& \quad \le (Q^h, \zeta)_H + \frac{\eps}{2} \norm{\zeta}^2_H + C_\eps \norm{\psi}^2_H + C_\eps \norm{\phi - \phib}^4_H + \norm{\hh(\phi) - \hh(\phib)}_\infty \norm{h}_H \norm{\zeta}_H \\
		& \quad \le (Q^h, \zeta)_H + \eps \norm{\zeta}^2_H + C_\eps \norm{\psi}^2_H + C_\eps \norm{\phi - \phib}^4_H + C_\eps \norm{\phi - \phib}^2_{\Hx 2} \norm{h}^2_H, 
	\end{align*}
	where $\eps > 0$ from Young's inequality is to be chosen later. Next, to estimate the term $(Q^h, \zeta)_H$, we use use again \eqref{remainder1}, H\"older and Young's inequalities, the embedding $V \hookrightarrow \Lx 4$ and the local Lipschitz continuity of $P \in \mathcal{C}^1$, indeed:
	\begin{align*}
		(Q^h, \zeta)_H & =  (P(\phib) (\theta - \chi \psi - \zeta), \zeta)_H + (P'(\phib) (\sigmab + (1-\chi) \phib - \mub) \, \psi, \zeta)_H \\
		& \quad + ((P(\phi) - P(\phib))[ (\sigma - \sigmab) - \chi (\phi - \phib) - (\mu - \mub) ], \zeta)_H \\ 
		& \quad + (R_3^h (\sigmab + (1-\chi) \phib - \mub) (\phi - \phib)^2, \zeta)_H \\
%		& \le (P(\phib) (\theta - \chi \psi - \zeta), \zeta)_H + \frac{\eps}{2} \norm{\zeta}^2_H + C_\eps (1 + \norm{\sigmab + (1-\chi) \phib - \mub}^2_{\infty}) \norm{\psi}^2_H \\
%		& \quad + L_P \norm{\phi - \phib}_{4} ( \norm{\sigma - \sigmab}_{4} + \chi \norm{\phi - \phib}_{4} + \norm{\mu - \mub}_{4} ) \norm{\zeta}_H \\
%		& \quad + C_\eps \norm{\sigmab + (1-\chi) \phib - \mub}^2_{\infty} \norm{\phi - \phib}^4_{4} \\
		& \le (P(\phib) (\theta - \chi \psi - \zeta), \zeta)_H + \eps \norm{\zeta}^2_H + C_\eps (1 + \norm{\sigmab + (1-\chi) \phib - \mub}^2_{\infty}) \norm{\psi}^2_H \\
		& \quad + C_\eps \norm{\phi - \phib}^2_V ( \norm{\sigma - \sigmab}^2_V + \chi \norm{\phi - \phib}^2_V + \norm{\mu - \mub}^2_V ) \\
		& \quad + C_\eps \norm{\sigmab + (1-\chi) \phib - \mub}^2_{\infty} \norm{\phi - \phib}^4_V,
	\end{align*}
	where again $\eps >0$ is yet to be chosen and without loss of generality, up to changing the constants $C>0$, can be thought the same as before. Then, by putting all together and integrating on $(0,t)$ for any $t \in (0,T)$, we have: 
	\begin{align*}
		& \int_0^t (\psi_t, \zeta)_H \, \de s + \int_0^t \norm{\nabla \zeta}^2_H \, \de s - \int_0^t (P(\phib) (\theta - \chi \psi - \zeta), \zeta)_H \, \de s \le 2\eps \int_0^t \norm{\zeta}^2_H \, \de s \\ 
		& \quad + C_\eps \int_0^t \norm{\theta}^2_H \, \de s + C_\eps \int_0^t (1 + \norm{\sigmab + (1-\chi) \phib - \mub}^2_{\infty}) \norm{\psi}^2_H \, \de s + C_\eps \int_0^T \norm{\phi - \phib}^4_H \, \de s \\ 
		& \quad + C_\eps \norm{\phi - \phib}^2_{L^\infty(0,T;\Hx 2)} \int_0^T \norm{h}^2_H \, \de s + C_\eps \norm{\phi - \phib}^4_{L^\infty(0,T;V)} \int_0^T  \norm{\sigmab + (1-\chi) \phib - \mub}^2_{\infty} \, \de s \\
		& \quad + C_\eps \norm{\phi - \phib}^2_{L^\infty(0,T;V)} \int_0^T ( \norm{\sigma - \sigmab}^2_V + \chi \norm{\phi - \phib}^2_V + \norm{\mu - \mub}^2_V ) \, \de s. 
	\end{align*}
	At this point, we use the continuous dependence estimate found in Theorem \ref{thm:contdep} and the fact that $\norm{\sigmab + (1-\chi) \phib - \mub}^2_{\infty} \in L^1(0,T)$ uniformly, to obtain:
	\begin{align}
		\label{diff:est1}
			& \int_0^t (\psi_t, \zeta)_H \, \de s + \int_0^t \norm{\nabla \zeta}^2_H \, \de s - \int_0^t (P(\phib) (\theta - \chi \psi - \zeta), \zeta)_H \le 2\eps \int_0^t \norm{\zeta}^2_H \, \de s + C_\eps \int_0^t \norm{\theta}^2_H \, \de s \nonumber \\
			& \quad + C_\eps \int_0^t (1 + \norm{\sigmab + (1-\chi) \phib - \mub}^2_{\infty}) \norm{\psi}^2_H \, \de s + C_\eps \norm{h}^4_{L^2(Q_T)} + C_\eps \norm{k}^4_{L^2(Q_T)}.  
	\end{align}
	Next, we test \eqref{eq:zeta} in $H$ by $- \partial_t \psi$ and, by using Leibniz's rule, \eqref{remainder1}, Cauchy-Schwarz and Young's inequalities, $F \in \mathcal{C}^3$, Remark \ref{F:derivatives} and the embedding $\Hx 2 \hookrightarrow \Lx \infty$, as well as $V \hookrightarrow \Lx4$, we infer that
	\begin{align*}
		& (\zeta, - \psi_t)_H + \tau \norm{\psi_t}^2_H + \mezzo \ddt ( (AF''(\phib) + Ba) \psi, \psi )_H - \chi (\theta, \psi_t)_H \\
		& \quad = (A F'''(\phib) \phib_t \psi, \psi)_H - (A R_1^h (\phi - \phib)^2, \psi_t)_H + (B J \ast \psi, \psi_t)_H \\
		& \quad \le \frac{\tau}{2} \norm{\psi_t}^2_H + C (1 + \norm{\phib_t}_{\Hx 2}) \norm{\psi}^2_H + C \norm{\phi - \phib}_V^4, 
	\end{align*}
	where $\norm{\phib_t}_{\Hx 2} \in \Lt 2$ by Theorem \ref{thm:strongsols}. Then, by integrating on $(0,t)$, for any $t \in (0,T)$, and, by exploiting hypothesis \ref{ass:fc0} and the continuous dependence result of Theorem \ref{thm:contdep}, we find that
	\begin{equation}
		\label{diff:est2}
		\begin{split}
			& \int_0^t (\zeta, - \psi_t) \, \de s + \frac{c_0}{2} \norm{\psi(t)}^2_H + \frac{\tau}{2} \int_0^t \norm{\psi_t}^2_H \, \de s - \int_0^t \chi (\theta, \psi_t)_H \, \de s \le \\
			& \quad \le C \int_0^t (1 + \norm{\phib_t}_{\Hx 2}) \norm{\psi}^2_H \, \de s +  C \norm{h}^4_{L^2(Q_T)} + C \norm{k}^4_{L^2(Q_T)}.
		\end{split}
	\end{equation}
	For the third estimate, we test \eqref{eq:theta} in $H$ by $\theta - \chi \psi$ and with similar techniques we find that
	\begin{align*}
		&\mezzo \ddt \norm{\theta}^2_H - \chi (\theta_t, \psi)_H + \norm{\nabla (\theta - \chi \psi)}^2_H = ( - Q^h, \theta - \chi \psi)_H \\
		& \quad \le - (P(\phib) (\theta - \chi \psi - \zeta), \theta - \chi \psi)_H + C \norm{\theta}^2_H + C (1 + \norm{\sigmab + (1-\chi) \phib - \mub}^2_{\infty}) \norm{\psi}^2_H \\
		& \qquad + C \norm{\phi - \phib}^2_V ( \norm{\sigma - \sigmab}^2_V + \chi \norm{\phi - \phib}^2_V + \norm{\mu - \mub}^2_V ) + C \norm{\sigmab + (1-\chi) \phib - \mub}^2_{\infty} \norm{\phi - \phib}^4_V.
	\end{align*}
	Therefore, by integrating on $(0,t)$ for any $t\in (0,T)$ and by using again Theorem \ref{thm:contdep}, we obtain:
	\begin{equation}
		\label{diff:est3}
		\begin{split}
			& \mezzo \norm{\theta(t)}^2_H + \int_0^t \norm{\nabla (\theta - \chi \psi)}^2_H \, \de s - \int_0^t \chi (\theta_t, \psi)_H \, \de s \\ 
			& \quad + \int_0^t (P(\phib) (\theta - \chi \psi - \zeta), \theta - \chi \psi)_H \, \de s \le C \int_0^t \norm{\theta}^2_H \, \de s \\ 
			& \qquad + C \int_0^t (1 + \norm{\sigmab + (1-\chi) \phib - \mub}^2_\infty) \norm{\psi}^2_H \, \de s + C \norm{h}^4_{L^2(Q_T)} + C \norm{k}^4_{L^2(Q_T)}.  \\ 
		\end{split}
	\end{equation}
	Finally, we test \eqref{eq:zeta} in $H$ by $\delta \zeta$, with $\delta \gs 0$ to be chosen later, getting:
	\begin{align*}
		\delta \norm{\zeta}^2_H & = \delta \tau (\psi_t, \zeta)_H + \delta (AF''(\phib) \psi, \zeta)_H + \delta (AR_1^h(\phi - \phib)^2, \zeta)_H \\ 
		& \quad + \delta (Ba \psi, \zeta)_H - \delta (B J \ast \psi, \zeta)_H - \delta \chi (\theta, \zeta)_H.
	\end{align*} 
	Exactly as we already did in the proof of Theorems \ref{thm:contdep} and \ref{thm:linearised}, by using Cauchy-Schwarz and Young's inequalities with $\gamma > 0$ to be chosen later and the same $\eps > 0$ used before, up to changing the constants $C>0$, we can estimate
	\[ \delta \norm{\zeta}^2_H \le C_\gamma \delta \tau \norm{\psi_t}^2_H + \gamma \delta \tau \norm{\zeta}^2_H + \eps \norm{\zeta}^2_H + C_{\eps, \delta} \norm{\psi}^2_H + C_{\eps, \delta} \norm{\theta}^2_H + C_{\eps, \delta} \norm{\phi - \phib}^4_V.  \]
	Then, by integrating on $(0,t)$, for any $t\in (0,T)$ and using Theorem \ref{thm:contdep}, we obtain:
	\begin{align}
		\label{diff:est4}
			& \left( 1 - \eps - \gamma \delta \tau \right) \int_0^t \norm{\zeta}^2_H \, \de s \\ 
			& \quad \le C_\gamma \delta \tau  \int_0^t \norm{\psi_t}^2_H \, \de s + C_{\eps, \delta} \int_0^t \left( \norm{\psi}^2_H + \norm{\theta}^2_H \right) \, \de s +  C_{\eps, \delta} \norm{h}^4_{L^2(Q_T)} + C_{\eps, \delta} \norm{k}^4_{L^2(Q_T)}. \nonumber
	\end{align}
	Now, we can sum the inequalities \eqref{diff:est1}, \eqref{diff:est2}, \eqref{diff:est3} and \eqref{diff:est4}, so that after cancellations and after observing that
	\[ - \int_0^t \left( \chi (\psi_t, \theta)_H + \chi (\psi, \theta_t)_H \right) \, \de s = - \frac{\chi}{2} (\psi(t), \theta(t))_H \ge - \frac{\chi^2}{2} \norm{\psi(t)}^2_H - \frac{1}{4} \norm{\theta(t)}^2_H, \]
	we arrive at the inequality:
	\begin{align*}
		&  \mezzo (c_0-\chi^2) \norm{\psi(t)}^2_H + \frac{3}{8} \norm{\theta(t)}^2_H + \left( 1 - 3\eps - \gamma \delta \tau \right) \int_0^t \norm{\zeta}^2_H \, \de s \\
		& \qquad + \left( \frac{\tau}{2} - C_\gamma \delta \tau \right) \int_0^t \norm{\psi_t}^2_H \, \de s + \int_0^t \norm{\nabla (\theta - \chi \psi)}^2_H \, \de s + \int_0^t \norm{\nabla \zeta}^2_H \, \de s \\
		& \qquad + \int_0^t \underbrace{(P(\phib) (\theta - \chi \psi - \zeta), \theta - \chi \psi - \zeta)_H}_{\ge 0} \, \de s \\ 
		& \quad \le C_{\eps,\delta} \int_0^t \norm{\theta}^2_H \, \de s + C_{\eps,\delta} \int_0^t (1 + \norm{\sigmab + (1-\chi) \phib - \mub}^2_{\infty} + \norm{\phib_t}_{\Hx 2}) \norm{\psi}^2_H \, \de s \\ 
		& \qquad + C_{\eps,\delta} \norm{h}^4_{L^2(Q_T)} + C_{\eps,\delta} \norm{k}^4_{L^2(Q_T)}.
	\end{align*}
	Observe that once again, exactly as in the proof of Theorem \ref{thm:contdep} (see equation \eqref{eq:epsdelta}), we can respectively choose $\eps \gs 0$, $\gamma = \gamma(\tau) \gs 0$ and then $\delta = \delta(\tau) \gs 0$ such that the constants above are strictly positive, Therefore, we can apply Gronwall's inequality and recover the estimate:
	\begin{equation}
	\label{diff:gronwall}
		\begin{split}
			& \tau \norm{\psi}^2_{H^1(0,T;H)} + \norm{\psi}^2_{L^\infty(0,T;H)} + \norm{\zeta}^2_{L^2(0,T;V)} + \norm{\theta}^2_{L^\infty(0,T;H)} \\
			& \quad + \norm{\nabla (\theta - \chi \psi)}^2_{L^2(0,T;H)} \le C_\tau \norm{h}^4_{L^2(Q_T)} + C_\tau \norm{k}^4_{L^2(Q_T)},
		\end{split}
	\end{equation}
	where the constant $C_\tau>0$ depends only on the parameters of the system, $R$,  $\Lambda$ and possibly on $\tau$. As in the previous proofs, the dependence on $\tau$ can be avoided by arguing as in Remark \ref{rmk:epsdelta}.  
	
	To conclude, we just need two more estimates. First we test the gradient of \eqref{eq:zeta}  by $\nabla \psi$ in $H$ and, after using the chain rule and adding and subtracting $\chi^2 (\nabla \psi, \nabla \psi)_H$, we have:
	\begin{align*}
		& (\nabla \zeta, \nabla \psi)_H = \frac{\tau}{2} \ddt \norm{\nabla \psi}^2_H + ( (AF''(\phib) + Ba ) \nabla \psi, \nabla \psi )_H + (A F'''(\phib) \nabla \phib \psi, \nabla \psi)_H \\
		& \quad +  (B \nabla a \psi, \nabla \psi)_H + (A \nabla R_1^h (\phi - \phib)^2, \nabla \psi)_H + (2A R_1^h (\phi - \phib) (\nabla \phi - \nabla \phib), \nabla \psi)_H \\
		& \quad - (B \nabla J \ast \psi. \nabla \psi)_H - \chi (\nabla (\theta - \chi \psi), \nabla \psi)_H - \chi^2 (\nabla \psi, \nabla \psi)_H.
	\end{align*}
	Then, by using \eqref{remainder1}, \eqref{remainder2}, $F \in \mathcal{C}^3$, Remark \ref{F:derivatives}, H\"older's inequality, \eqref{gn:ineq} and Sobolev embeddings, we can infer that
	\begin{align*}
		& \frac{\tau}{2} \ddt \norm{\nabla \psi}^2_H + (c_0 - \chi^2) \norm{\nabla \psi}^2_H \\
		& \quad \le \norm{\nabla \zeta}_H \norm{\nabla \psi}_H + C \norm{F'''(\phib) }_{\infty} \norm{\nabla \phib}_{6} \norm{\psi}_H^{1/2} \norm{\nabla \psi}_H^{3/2} + 2Bb \norm{\psi}_H \norm{\nabla \psi}_H \\
		& \qquad + A \norm{\nabla R_1^h}_{6} \norm{\phi - \phib}^2_{6} \norm{\nabla \psi}_H + 2A \norm{R_1^h}_{\infty} \norm{\phi - \phib}_{6} \norm{\nabla \phi - \nabla \phib}_{3} \norm{\nabla \psi}_H \\
		& \qquad + \chi \norm{\nabla (\theta - \chi \psi)}_H \norm{\nabla \psi}_H \\
		&  \quad \le C \norm{\nabla \psi}^2_H + C \norm{\nabla \zeta}^2_H + C \norm{\nabla (\theta - \chi \psi)}^2_H + C(1 + \norm{\nabla \phib}^4_{6}) \norm{\psi}^2_H \\
		& \qquad + \norm{\phi - \phib}^4_V + \norm{\phi - \phib}^4_{\Hx 2},  
	\end{align*}
	where $\norm{\nabla \phib}^4_{6} \in L^\infty(0,T)$. Therefore, by integrating on $(0,t)$ for any $t\in (0,T)$, using Theorem \eqref{thm:contdep} and the previous estimate \eqref{diff:gronwall}, after applying Gronwall's lemma we obtain the following:
	\[ \norm{\psi}^2_{L^\infty(0,T;V)} \le C \norm{h}^4_{L^2(Q_T)} + C \norm{k}^4_{L^2(Q_T)}. \]
	Moreover, by comparison with the estimate for $\nabla (\theta - \chi \psi)$, we can also see that
	\[ \norm{\theta}^2_{L^2(0,T;V)} \le C \norm{h}^4_{L^2(Q_T)} + C \norm{k}^4_{L^2(Q_T)}. \]
	Finally, we test \eqref{eq:theta} in $H$ by $\partial_t (\theta - \chi \psi)$ and, with Cauchy-Schwarz and Young's inequalities, we get:
	\begin{align*}
		& \norm{\theta_t}^2_H + \mezzo \ddt \norm{\nabla (\theta - \chi \psi)}^2_H = \chi (\theta_t, \psi_t)_H - (Q^h, \theta_t - \chi \psi_t)_H \\
		& \quad \le \mezzo \norm{\theta_t}^2_H + C \norm{\psi_t}^2_H + C \norm{Q^h}^2_H. 
	\end{align*}
	Regarding $\norm{Q^h}^2_H$, one can argue exactly like we did for the first estimate and then, after integrating in time and applying Theorem \ref{thm:contdep}, arrive at:
	\begin{align*}
		& \mezzo \int_0^t \norm{\theta_t}^2_H \, \de s + \norm{\nabla (\theta - \chi \psi)(t)}^2_H \\
		& \quad \le C \int_0^t \left( \norm{\psi_t}^2_H + \norm{\theta}^2_H + \norm{\zeta}^2_H \right) \, \de s + C \int_0^t (1 + \norm{\sigmab + (1-\chi) \phib - \mub}^2_{\infty}) \norm{\psi}^2_H \, \de s \\
		& \qquad + C \norm{h}^4_{L^2(Q_T)} + C \norm{k}^4_{L^2(Q_T)}.  
	\end{align*}
	Hence, by applying Gronwall's lemma and the previous estimate \eqref{diff:gronwall}, we infer that
	\[ \norm{\theta}^2_{H^1(0,T;H)} + \norm{\theta - \chi \psi}^2_{L^\infty(0,T;V)} \le C \norm{h}^4_{L^2(Q_T)} + C \norm{k}^4_{L^2(Q_T)}, \]
	and then also by comparison 
	\[ \norm{\theta}^2_{L^\infty(0,T;V)} \le C \norm{h}^4_{L^2(Q_T)} + C \norm{k}^4_{L^2(Q_T)}. \]
	In the end, by putting all together, we showed that
	\[ \norm{\psi}^2_{H^1(0,T;H) \cap L^\infty(0,T;V)} + \norm{\zeta}^2_{L^2(0,T;V)} + \norm{\theta}^2_{H^1(0,T;H) \cap L^\infty(0,T;V)} \le  C \norm{h}^4_{L^2(Q_T)} + C \norm{k}^4_{L^2(Q_T)},  \]
	which is exactly \eqref{frechet:aim} with $s=4>2$ and so we are done.
\end{proof}

\subsection{Adjoint system and first-order necessary conditions}

In order to write down the necessary conditions of optimality and make them useful for applications, we now need to introduce the adjoint system to the optimal control problem (CP). Indeed, we fix an optimal state $(\phib, \mub, \sigmab) = \mathcal{S}(\ub, \vb)$, then, by using the formal Lagrangian method with adjoint variables $(p,q,r)$, one can find that the adjoint system, which is formally solved by these variables, has the following form:
\begin{alignat}{2}
	& - \partial_t (p + \tau q) + AF''(\phib) q + Baq - BJ \ast q + \chi \Delta r + \chi P(\phib) (p-r) \nonumber \\ 
	& \qquad - P'(\phib) (\sigmab + \chi (1-\phib) - \mub)(p-r) + p \hh'(\phib) \ub = \alpha_Q (\phib - \phi_Q) \,\,\, && \text{in } Q_T,  \label{eq:p}\\
	& - q - \Delta p + P(\phib)(p-r) = 0 \quad && \text{in } Q_T,  \label{eq:q} \\
	& - \partial_t r - \Delta r - \chi q  - P(\phib) (p-r) = \beta_Q (\sigmab - \sigma_Q) \quad && \text{in } Q_T, \label{eq:r}
\end{alignat}
together with the following boundary and final conditions:
\begin{alignat}{2}
	& \partial_{\n} p = \partial_{\n} r = 0 \qquad && \text{on } \partial \Omega \times (0,T), \label{bca} \\
	& (p + \tau q)(T) = \alpha_\Omega (\phib(T) - \phi_\Omega), \quad r(T) = \beta_\Omega (\sigmab(T) - \sigma_\Omega) \qquad && \text{in } \Omega. \label{fca}
\end{alignat}
First, we prove the well-posedness of this adjoint system in the following theorem.

\begin{theorem}
	\label{thm:adjoint}
	Assume hypotheses \emph{\ref{ass:coeff}--\ref{ass:fc0}}, \emph{\ref{ass:j2}--\ref{ass:initial2}} and \emph{\ref{C1}--\ref{C4}} and let $(\phib, \mub, \sigmab) \in \mathbb{X}$ be the strong solution to \eqref{eq:phi2}--\eqref{ic2}, corresponding to $(\ub, \vb) \in \Uad \times \Vad$. Then the adjoint system \eqref{eq:p}--\eqref{fca} admits a \emph{unique strong solution} such that
	\begin{align*}
		& p + \tau q \in H^1(0,T;H) \cap L^\infty(0,T; H), \\
		& \tau q \in L^2(0,T;H), \\
		& p \in L^2(0,T;W), \\
		& r \in H^1(0,T;H) \cap L^\infty(0,T;V) \cap L^2(0,T;W),
	\end{align*}
	which fulfils \eqref{eq:p}--\eqref{fca} almost everywhere in the respective sets. 
\end{theorem}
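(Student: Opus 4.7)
The plan is to proceed in the same spirit as for the linearised system in Theorem \ref{thm:linearised}, namely via a Faedo-Galerkin discretisation in the spectral basis of $\mathcal{N}$ combined with suitable a priori estimates. First I would reverse time through $s = T - t$, so that \eqref{eq:p}--\eqref{fca} becomes a forward Cauchy problem with initial data $(p+\tau q)(0) = \alpha_\Omega(\phib(T)-\phi_\Omega) \in H$ and $r(0) = \beta_\Omega(\sigmab(T)-\sigma_\Omega) \in V$; note that the $V$-regularity of the latter is exactly why \ref{C2} was assumed in the stronger form $\sigma_\Omega \in \Hx 1$. By Theorem \ref{thm:strongsols} and the separation property, all the coefficients $F''(\phib)$, $F'''(\phib)$, $P(\phib)$, $P'(\phib)$, $\hh'(\phib)$, $\ub$ and $\sigmab+\chi(1-\phib)-\mub$ are uniformly bounded in $L^\infty(Q_T)$, with the gradient of $F''(\phib)$ controlled in $L^\infty(0,T;L^6(\Omega))$ in the spirit of Remark \ref{F:derivatives}.

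The first a priori estimate is obtained, within the discretisation, by testing the evolution equation for $P := p+\tau q$ by $q$, the elliptic relation \eqref{eq:q} by $\delta q$ with a small $\delta>0$, and the $r$-equation by $r-\chi p$. Summing the results and exploiting Leibniz's rule together with $(\partial_s p, -\Delta p)_H = \mezzo\, \ddt \norm{\nabla p}^2_H$, one extracts the coercive contributions $\frac{\tau}{2}\ddt\norm{q}^2_H$, $c_0\norm{q}^2_H$ from \ref{ass:fc0}, $\norm{\nabla(r-\chi p)}^2_H$ and a non-negative reaction $(P(\phib)(p-r-q), p-r-q)_H \ge 0$ from \ref{ass:ph2}. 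The cross-term $\chi(\Delta r, q)_H$ is compensated by the diffusion generated on $r-\chi p$, exactly as the cross-diffusion in the linearised system was treated through the test function $\rho-\chi\xi$; the remaining contributions are bounded by H\"older's and Young's inequalities, with $\eps$, $\gamma$, $\delta$ chosen precisely as in \eqref{eq:epsdelta}. An application of Gronwall's lemma then yields uniform bounds on $\norm{P}_{L^\infty(0,T;H)}$, $\sqrt{\tau}\norm{q}_{L^2(0,T;H)}$, $\norm{r}_{L^\infty(0,T;H) \cap L^2(0,T;V)}$ and, through the elliptic equation $-\tau\Delta p + (1+\tau P(\phib))p = P + \tau P(\phib)r$ obtained by substituting $q = (P-p)/\tau$ into \eqref{eq:q}, also on $\norm{p}_{L^2(0,T;V)}$.

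Higher regularity is then obtained in cascade. Testing the $r$-equation by $\partial_s r$ and by $-\Delta r$ gives $r \in H^1(0,T;H) \cap L^\infty(0,T;V) \cap L^2(0,T;W)$, which upgrades the right-hand side of the elliptic equation for $p$ to $L^\infty(0,T;H)$ and thus yields $p \in L^2(0,T;W)$, hence $q \in L^2(0,T;H)$ and $\tau q = P - p \in L^2(0,T;H)$ via \eqref{eq:q}. A comparison in the $P$-equation finally provides $P \in H^1(0,T;H)$. Passage to the limit in the Galerkin scheme is routine in view of the linearity of the system, and uniqueness follows at once by applying the first energy estimate to the difference of two solutions, for which both initial data and forcing vanish. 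The hardest part, as already in Theorems \ref{thm:contdep} and \ref{thm:linearised}, is closing the first energy estimate in the presence of the cross-diffusion $\chi\Delta r$ and of the algebraic coupling of $p$ and $q$ via \eqref{eq:q}; this is precisely where the assumptions $c_0 > \chi^2$ from \ref{ass:flim} and $P_0 > 0$ from \ref{ass:ph2} become indispensable.
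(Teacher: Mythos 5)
Your overall architecture (time reversal, Galerkin in the eigenbasis of $\mathcal{N}$, energy estimate, comparison, linearity for uniqueness) is the right frame, but the central energy estimate you propose does not close, and the claimed coercive terms do not actually arise from your choice of test functions. First, testing the $r$-equation \eqref{eq:r} by $r-\chi p$ does not produce $\norm{\nabla(r-\chi p)}^2_H$: the adjoint equation contains $-\Delta r$, not $-\Delta(r-\chi p)$, so you only get $\norm{\nabla r}^2_H-\chi(\nabla r,\nabla p)_H$, and your scheme generates no coercive $\norm{\nabla p}^2_H$ term to absorb the cross part (the paper creates one by testing \eqref{eq:q} by $Mp$). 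Likewise, the sign-definite reaction $(P(\phib)(p-r-q),p-r-q)_H\ge 0$ does not come out of the combination ``\eqref{eq:p} by $q$, \eqref{eq:q} by $\delta q$, \eqref{eq:r} by $r-\chi p$'': the adjoint reaction terms lack the antisymmetric pairing that produced such squares in the state and linearised systems. Second, the cross-term $\chi(\Delta r,q)_H$ cannot be compensated by any ``diffusion on $r-\chi p$'': integrating by parts would create the nowhere-controlled $\nabla q$, so one needs a coercive $\norm{\Delta r}^2_H$ already in the first estimate, i.e.\ the test of \eqref{eq:r} by $-\Delta r$ must be part of it (as in the paper), not deferred to a later regularity cascade. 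Third, and most structurally, the term $-(\partial_t(p+\tau q),q)_H$ does not reduce to perfect time derivatives: after substituting $q=-\Delta p+P(\phib)(p-r)$ from \eqref{eq:q} you are left with a contribution of the type $(\partial_t p,P(\phib)r)_H$ (equivalently, after time integration by parts, $(P(\phib)p,\partial_t r)_H$), and neither $\partial_t p$ nor $\partial_t r$ is controlled at this stage of your scheme. Finally, your energy would contain $\tau\norm{q(T)}^2_H$ and $\norm{\nabla p(T)}^2_H$, which are not prescribed final data; they can only be recovered through an extra elliptic-regularity argument applied to \eqref{eq:q} at $t=T$, a step you do not address, whereas the paper's energy $\norm{p+\tau q}^2_H+\norm{r}^2_H+\norm{\nabla r}^2_H$ matches exactly the data $(p+\tau q)(T)\in H$, $r(T)\in V$ of \eqref{fca}.

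The paper's mechanism is different from the one you invoke: it tests \eqref{eq:p} by $p+\tau q$ (so the time derivative is exactly $-\mezzo\ddt\norm{p+\tau q}^2_H$), \eqref{eq:q} multiplied by a large constant $M$ by $p$, and \eqref{eq:r} by both $r$ and $-\Delta r$; the coercivity in $q$ is \emph{not} obtained from \ref{ass:fc0}, but from the term $M\tau\norm{q}^2_H$ generated by the $M$-weighted test, with $M$ chosen so that $\tau M-\chi^2-\tau P_\infty^2-\tau\eps-\delta>0$. In particular your closing remark that $c_0>\chi^2$ and $P_0>0$ are ``indispensable'' here misidentifies where the estimate closes: the adjoint estimate uses only $P\in L^\infty$, the separation property and $\tau>0$, which is precisely why the resulting constant $C_\tau$ degenerates as $\tau\to 0$ (unless $\chi=0$), as the paper points out after Theorem \ref{thm:adjoint}. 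To repair your proof you would either have to adopt the paper's test-function combination, or add to your scheme the tests producing $\norm{\Delta r}^2_H$ and a coercive $\norm{\nabla p}^2_H$, handle the leftover time-derivative coupling, and control the final values of $q$ and $\nabla p$ via the elliptic equation at $t=T$; as written, the first estimate does not go through.
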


\begin{proof}
	We observe that \eqref{eq:p}--\eqref{fca} is a backward \emph{linear} system, so once we prove well-posedness through an energy estimate, the uniqueness of the solution will follow easily. Also in this case the proof can be made rigorous through a Galerkin approximation scheme, the details of which will be left to the reader. We will limit ourselves to derive formal energy estimates, which can then be used to pass to the limit in the approximation. 
	
	We start by testing \eqref{eq:p} in $H$ by $p + \tau q$, so that, by also adding and subtracting $\tau q$ on every occurrence of $p$ in the source terms, we get:
	\begin{align*}
		& - \mezzo \ddt \norm{p + \tau q}^2_H = - (AF''(\phib) q, p +\tau q)_H - (Baq, p+ \tau q)_H + (BJ \ast q, p + \tau q)_H \\ 
		& \quad - \chi (\Delta r, p+\tau q)_H - \chi (P(\phib) (p + \tau q - \tau q -r), p +\tau q)_H \\  
		& \quad + (P'(\phib) (\sigmab + \chi (1-\phib) - \mub)(p + \tau q - \tau q -r), p + \tau q)_H \\
		&  \quad - ( (p + \tau q -\tau q) \hh'(\phib) \ub, p +\tau q)_H + (\alpha_Q (\phib - \phi_Q), p +\tau q)_H.
	\end{align*}
	Then, by using the separation property for $\phib$, the regularity properties of the functions $F,P,\hh$, Cauchy-Schwarz inequality and Young's inequality with $\eps > 0$ to be chosen later, we infer that
	\begin{equation}
		\label{adj:est1}
		\begin{split}
			& - \mezzo \ddt \norm{p + \tau q}^2_H \le \tau \eps \norm{q}^2_H + \frac{1}{8} \norm{\Delta r}^2_H + C_{\eps} \norm{r}^2_H \\
			& \quad + C_{\eps} (1 + \norm{\sigmab + \chi (1 - \phib) - \mub}^2_{\infty} + \norm{\ub}^2_{\infty}) \norm{p+ \tau q}^2_H + C \norm{\alpha_Q (\phib - \phi_Q)}^2_H, 
		\end{split}
	\end{equation}
	where the constants $C_{\eps}>0$ depend only on the parameters and on $\eps$. Next, we multiply \eqref{eq:q}  by $M >0$, to be chosen later, and we test it in $H$ by $p$. By adding and subtracting $\tau q$ where necessary, we get:
	\[ M (- q, p +\tau q - \tau q)_H + M \norm{\nabla p}^2_H + M \underbrace{ (P(\phib) p, p) }_{\ge 0} - M (P(\phib) r, p +\tau q -\tau q) = 0. \]
	Then, by using Cauchy-Schwarz and Young's inequalities with the same $\eps > 0$ as before, without loss of generality, we deduce that
	\begin{equation}
		\label{adj:est2}
		M \tau \norm{q}^2_H + M \norm{\nabla p}^2_H \le \tau \eps \norm{q}^2_H + C_{\eps,M} \norm{p + \tau q}^2_H + C_{\eps,M} \norm{r}^2_H.
	\end{equation}
	Now we test \eqref{eq:r} in $H$ by $r$, so that
	\[ - \mezzo \ddt \norm{r}^2_H + \norm{\nabla r}^2_H = \chi (q,r)_H + (P(\phib)(p+ \tau q - \tau q - r), r)_H - (\beta_Q (\sigmab - \sigma_Q), r)_H.  \]
	Then, again by Cauchy-Schwarz and Young, we obtain:
	\begin{equation}
		\label{adj:est3}
		- \mezzo \ddt \norm{r}^2_H + \norm{\nabla r}^2_H \le \tau \eps \norm{q}^2_H + \delta \norm{q}^2_H + C_{\eps} \norm{p + \tau q}^2_H + C_{\delta} \norm{r}^2_H + C \norm{\beta_Q (\sigmab - \sigma_Q)}^2_H,
	\end{equation}
	with $\delta >0$ again to be chosen later. Finally, we need to compensate the term $\frac{1}{8} \norm{\Delta r}^2_H$ in \eqref{adj:est1}. This is where we will need the stronger hypothesis $\sigma_\Omega \in V$ in \ref{C2}, after time-integration. Indeed, now we test \eqref{eq:r} in $H$ by $- \Delta r$, which is possible within the discretization. By adding and subtracting again $\tau q$ where necessary, we get:
	\[
		- \mezzo \ddt \norm{\nabla r}^2_H + \norm{\Delta r}^2_H = \chi (q, \Delta r) + (P(\phib) (p + \tau q - \tau q -r), \Delta r)_H + (\beta_Q (\sigmab- \sigma_Q), \Delta r)_H.
	\]
	We now estimate each term on the right-hand side by using Cauchy-Schwarz's and Young's inequality, indeed:
	\begin{equation}
		\label{adj:est4}
		\begin{split}
		 - \mezzo \ddt \norm{\nabla r}^2_H + \norm{\Delta r}^2_H & \le \frac{3}{4} \norm{\Delta r}^2_H + \chi^2 \norm{q}^2_H + \tau P^2_\infty \norm{q}^2_H \\
		& \quad + C \norm{p +\tau q}^2_H + C \norm{r}^2_H + C \norm{\beta_Q (\sigmab- \sigma_Q)}^2_H.
		\end{split}
	\end{equation}
	Now, we sum all inequalities \eqref{adj:est1}, \eqref{adj:est2}, \eqref{adj:est3}, \eqref{adj:est4} and we obtain the following: 
	\begin{align*}
		& - \mezzo \ddt \left( \norm{p + \tau q}^2_H + \norm{r}^2_H + \norm{\nabla r}^2_H \right) + M \norm{\nabla p}^2_H  + \norm{\nabla r}^2_H \\
		& \qquad + \left( \tau M - \chi^2 - \tau P^2_\infty - \tau \eps - \delta \right) \norm{q}^2_H + \frac{1}{8} \norm{\Delta r}^2_H \\ 
		& \quad \le C_\eps (1 + \norm{\sigmab + \chi (1 - \phib) - \mub}^2_\infty + \norm{\ub}^2_\infty) \norm{p+ \tau q}^2_H + C_\delta \norm{r}^2_H \\ 
		& \qquad + C \norm{\beta_Q (\sigmab- \sigma_Q)}^2_H + C \norm{\alpha_Q (\phib - \phi_Q)}^2_H.
	\end{align*}
	At this point, we can respectively choose $M = M(\tau) > 0$ big enough and then $\eps= \eps(\tau) >0$ and $\delta=\delta(\tau)>0$ small enough such that
	\[ \left( \tau M - \chi^2 - \tau P^2_\infty - \tau \eps - \delta \right) := \gamma(\tau) > 0. \]
	Therefore, by integrating on $(t,T)$, for any $t \in (0,T)$, and using the prescribed final data, we eventually arrive at the inequality:
	\begin{align*}
		& \mezzo \norm{(p+\tau q)(t)}^2_H + \mezzo \norm{r(t)}^2_H + \mezzo \norm{\nabla r(t)}^2_H + M(\tau) \int_t^T \norm{\nabla p}^2_H \, \de s + \int_t^T \norm{\nabla r}^2_H \, \de s \\
		& \qquad + \gamma(\tau) \int_t^T \norm{q}^2_H \, \de s + \frac{1}{8} \int_t^T \norm{\Delta r}^2_H \, \de s \\
		& \quad \le C \norm{\alpha_\Omega (\phib(T) - \phi_\Omega)}^2_H + C \norm{\beta_\Omega (\sigmab(T) - \sigma_\Omega)}^2_V + C_\tau \int_t^T \norm{r}^2_H \, \de s \\
		& \qquad + C_\tau \int_t^T (1 + \norm{\sigmab + \chi (1 - \phib) - \mub}^2_\infty + \norm{\ub}^2_\infty) \norm{p+ \tau q}^2_H \, \de s \\ 
		& \qquad + C \int_t^T \norm{\beta_Q (\sigmab- \sigma_Q)}^2_H \, \de s + C \int_t^T \norm{\alpha_Q (\phib - \phi_Q)}^2_H \, \de s. 
	\end{align*}
	Hence, by the regularities given by Theorem \ref{thm:strongsols} and \ref{C2}, it is possible to apply Gronwall's lemma to obtain the following uniform estimates:
	\begin{equation}
		\label{adj:gronwall}
		\begin{split}
			& \norm{p + \tau q}^2_{L^\infty(0,T;H)} + \tau \norm{q}^2_{L^2(0,T;H)} + \norm{p}^2_{L^2(0,T;V)} + \norm{r}^2_{L^\infty(0,T;V) \cap L^2(0,T;W)} \\
			& \qquad \le C_\tau \Big( \norm{\alpha_\Omega (\phib(T) - \phi_\Omega)}^2_H + \norm{\beta_\Omega (\sigmab(T) - \sigma_\Omega)}^2_V \\
			& \qquad \qquad \qquad + \norm{\beta_Q (\sigmab- \sigma_Q)}^2_{L^2(0,T;H)} + \norm{\alpha_Q (\phib - \phi_Q)}^2_{L^2(0,T;H)}  \Big),
		\end{split}
	\end{equation}
	where $C_\tau>0$ is a constant depending only on the parameters of the system. Now, by comparison in \eqref{eq:q}, since $P \in L^\infty$ and $p, q, r \in L^2(0,T;H)$ we can deduce that also $\Delta p$ is uniformly bounded in $L^2(0,T;H)$. In an analogous way, by comparison in \eqref{eq:r}, we can see that $r_t$ is also uniformly bounded in $L^2(0,T;H)$. Therefore we have that
	\begin{align*}
		\norm{p}^2_{L^2(0,T;W)} + \norm{r}^2_{H^1(0,T;H)} & \le C_\tau \Big( \norm{\alpha_\Omega (\phib(T) - \phi_\Omega)}^2_H + \norm{\beta_\Omega (\sigmab(T) - \sigma_\Omega)}^2_V \\
		& \qquad \,\,\, + \norm{\beta_Q (\sigmab- \sigma_Q)}^2_{L^2(0,T;H)} + \norm{\alpha_Q (\phib - \phi_Q)}^2_{L^2(0,T;H)}  \Big).
	\end{align*}
	Finally, we observe that by H\"older's inequality and Sobolev embeddings:
	\begin{align*}
		& \int_0^T \norm{ P(\phib) (\sigmab + \chi (1-\phib) - \mub) (p-r)}^2_H \, \de t \le P^2_\infty \int_0^T \norm{ \sigmab + \chi (1-\phib) - \mub }^2_{4} \norm{p-r}^2_{4} \, \de t \\
		& \qquad \le P_\infty^2 \norm{ \sigmab + \chi (1-\phib) - \mub }^2_{L^\infty(0,T;V)} \int_0^T \norm{p-r}^2_V \, \de t,
	\end{align*}
	which is uniformly bounded by Theorem \ref{thm:strongsols} and \eqref{adj:gronwall}. Then, by comparison in \eqref{eq:p}, we deduce that also 
	\begin{align*}
		& \norm{p + \tau q}^2_{H^1(0,T;H)} \le C_\tau \Big( \norm{\alpha_\Omega (\phib(T) - \phi_\Omega)}^2_H + \norm{\beta_\Omega (\sigmab(T) - \sigma_\Omega)}^2_V \\
		& \qquad \qquad \qquad \qquad \qquad + \norm{\beta_Q (\sigmab- \sigma_Q)}^2_{L^2(0,T;H)} + \norm{\alpha_Q (\phib - \phi_Q)}^2_{L^2(0,T;H)}  \Big).
	\end{align*}
	With all these uniform estimates, one can easily pass to the limit in a discretisation framework an prove the existence of a solution. Then, by linearity of the system, estimate \eqref{adj:gronwall} provides also uniqueness. 
\end{proof}

\begin{remark}
	Notice that in this case, even if $\tau$ is bounded from above as in Remark \ref{rmk:epsdelta}, the constant $C_\tau$ in \eqref{adj:gronwall} cannot be made independent of $\tau$. This could be possible if $\chi = 0$, indeed the definition of the coefficient $\gamma$ would become:
	\[ (\tau M - \tau P^2_\infty - \tau \eps) = \tau (M - P^2_\infty - \eps) = \tau \gamma, \]
	with $\gamma$ independent of $\tau$. However, if for instance one intends to try sending $\tau \to 0$, then, in general, different estimates would be required. Regarding this problem, we refer the reader to \cite{RSS2021}.
\end{remark}

Finally, with the adjoint variables, we can determine and then simplify the first-order necessary conditions. Indeed, we have the following result:

\begin{theorem}
	\label{thm:optcond}
	Assume hypotheses \emph{\ref{ass:coeff}--\ref{ass:fc0}}, \emph{\ref{ass:j2}--\ref{ass:initial2}} and \emph{\ref{C1}}--\emph{\ref{C4}}. Let $(\ub, \vb) \in \Uad \times \Vad$ be an optimal control for \emph{(CP)} and let $(\phib, \mub, \sigmab) = \mathcal{S}(\ub, \vb)$ be the corresponding optimal state, i.e.~the solution of \eqref{eq:phi2}--\eqref{ic2} with these $(\ub, \vb)$. Let also $(p,q,r)$ be the adjoint variables to $(\phib, \sigmab, \mub)$, i.e.~the solutions to the adjoint system \eqref{eq:p}--\eqref{fca}. Then, they satisfy the following variational inequality, which holds for any $(u,v) \in \Uad \times \Vad$:
	\begin{equation}
		\label{var:ineq}
			\int_0^T \int_\Omega (- \hh(\phib) p + \alpha_u \ub )(u - \ub) \, \de x  \, \de t + \int_0^T \int_\Omega ( r + \beta_v \vb) (v - \vb) \, \de x  \, \de t \ge 0.
	\end{equation}
\end{theorem}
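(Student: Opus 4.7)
The plan is to derive the variational inequality by combining three ingredients: the convexity of the admissible set, the Fréchet-differentiability of the control-to-state operator from Theorem \ref{thm:frechet}, and the adjoint system established in Theorem \ref{thm:adjoint}. First, I introduce the reduced cost functional $\tilde{\mathcal{J}}(u,v) := \mathcal{J}(\mathcal{S}(u,v),u,v)$, which is Fréchet-differentiable as the composition of the smooth quadratic functional $\mathcal{J}$ with the Fréchet-differentiable operator $\mathcal{S} : \mathcal{U}_R \times \mathcal{V}_R \to \mathbb{W}$ (note that the evaluations at $t=T$ are continuous from $H^1(0,T;H) \cap L^\infty(0,T;V)$ into $H$). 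Since $\Uad \times \Vad$ is convex and $(\ub,\vb)$ is a minimiser, the standard convexity argument (taking the right derivative at $\lambda = 0^+$ along the segment $(\ub + \lambda(u-\ub), \vb + \lambda(v-\vb))$) yields $D\tilde{\mathcal{J}}(\ub,\vb)(u-\ub, v-\vb) \ge 0$ for every $(u,v) \in \Uad \times \Vad$. Writing $h := u - \ub$, $k := v - \vb$, and letting $(\xi, \eta, \rho)$ denote the solution of the linearised system \eqref{eq:xi}--\eqref{icl}, the chain rule gives
\begin{equation*}
\begin{split}
D\tilde{\mathcal{J}}(\ub,\vb)(h,k) & = \alpha_\Omega \int_\Omega (\phib(T)-\phi_\Omega)\xi(T) + \alpha_Q \int_{Q_T} (\phib-\phi_Q)\xi \\
& \quad + \beta_\Omega \int_\Omega (\sigmab(T)-\sigma_\Omega)\rho(T) + \beta_Q \int_{Q_T}(\sigmab-\sigma_Q)\rho + \alpha_u \int_{Q_T}\ub h + \beta_v \int_{Q_T} \vb k.
\end{split}
\end{equation*}

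The core step is to eliminate all terms containing $\xi$, $\eta$, $\rho$ by introducing the adjoint variables $(p,q,r)$. To this end, I plan to test each of the three linearised equations respectively against $p$, $q$, $r$ (or equivalently, each adjoint equation against $\xi$, $\eta$, $\rho$), and integrate by parts in space and time. Spatial integration by parts is clean thanks to the boundary conditions $\partial_\n \eta = \partial_\n p = \partial_\n r = 0$, together with $\partial_\n(\rho - \chi\xi) = 0$, which combines with $\partial_\n r = 0$ to turn $\int r \Delta \rho - \chi \int r \Delta \xi$ into $\int (\rho - \chi \xi) \Delta r$ without boundary remainder. Symmetry of $J$, assumed in \ref{ass:j}, eliminates the non-local contributions via $\int q (J \ast \xi) = \int \xi (J \ast q)$. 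For the time integration by parts, the crucial observation is that, although $p$ alone only lives in $L^2(0,T;W)$, Theorem \ref{thm:adjoint} guarantees $p + \tau q \in H^1(0,T;H)$, so the first adjoint equation must be manipulated with $\partial_t(p + \tau q)$ treated as a single object; this matches the linearised structure, since testing $\partial_t \xi$ against $p$ and $-\tau \partial_t \xi$ against $q$ (the latter coming from the linearised equation for $\eta$) produces, after time integration by parts, exactly $-\int_0^T\langle \partial_t(p + \tau q), \xi\rangle$ plus a boundary term at $t=T$.

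After adding the three resulting identities, the final-time boundary contributions $(p+\tau q)(T)\xi(T)$ and $r(T)\rho(T)$, evaluated through \eqref{fca} and the fact that $\xi(0) = \rho(0) = 0$, produce precisely the terms $\alpha_\Omega \int_\Omega(\phib(T)-\phi_\Omega)\xi(T)$ and $\beta_\Omega \int_\Omega(\sigmab(T)-\sigma_\Omega)\rho(T)$. All the reaction-type coupling terms (those involving $P'(\phib)(\sigmab + \chi(1-\phib) - \mub)$, $P(\phib)$, $AF''(\phib)$, $Ba$, $BJ \ast$, $\chi \Delta$, and $\hh'(\phib)\ub$) cancel pairwise by design of the adjoint system. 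What remains, from the inhomogeneous parts, are the adjoint right-hand sides $\alpha_Q(\phib - \phi_Q)$ and $\beta_Q(\sigmab - \sigma_Q)$ paired with $\xi$ and $\rho$ respectively, and the linearised source terms producing $-\int_{Q_T}\hh(\phib) p\, h$ and $+\int_{Q_T} r\, k$. Matching the two sides gives
\begin{equation*}
D\tilde{\mathcal{J}}(\ub,\vb)(h,k) = \int_{Q_T}(-\hh(\phib) p + \alpha_u \ub)\,h \,\de x \,\de t + \int_{Q_T}(r + \beta_v \vb)\,k \,\de x \,\de t,
\end{equation*}
and inserting $(h,k) = (u-\ub, v-\vb)$ into the abstract inequality of the first paragraph yields exactly \eqref{var:ineq}.

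The main obstacle I anticipate is the rigorous justification of the time integration by parts when the adjoint variables are paired with linearised quantities of lower regularity, and when $\partial_t p$ does not exist as an individual object but only in the combination $\partial_t(p+\tau q)$. The cleanest way around this is to first carry out the computation within the Galerkin approximation underlying the proof of Theorem \ref{thm:adjoint}, where all time derivatives are classical, identify the discrete analogue of the above identity, and then pass to the limit using the strong convergences of both the linearised and adjoint sequences in $L^2(0,T;H)$, combined with the weak convergences in the higher-regularity spaces already established. Everything else is an exercise in careful algebraic bookkeeping made possible by the symmetry of $J$, the $\mathcal{C}^4$ regularity of $F$ (via Remark \ref{F:derivatives}), and the strict separation property \eqref{separation}.
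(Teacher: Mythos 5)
Your proposal is correct and follows essentially the same route as the paper: the convexity of $\Uad \times \Vad$ plus the Fréchet-differentiability of the reduced cost functional yields the abstract first-order condition, and the duality computation between the linearised system and the adjoint system (with the grouping $\partial_t(p+\tau q)$, the combined Laplacian $\Delta(\rho-\chi\xi)$, the symmetry of $J$, the final conditions \eqref{fca} and $\xi(0)=\rho(0)=0$) eliminates $(\xi,\eta,\rho)$ and leaves exactly \eqref{var:ineq}. The paper organizes the same cancellation by substituting the adjoint equations into the derivative of the reduced functional and then adding the identity obtained from \eqref{eq:q} tested against $\eta$, which is just your pairing read in the opposite direction, so there is no substantive difference.
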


\begin{proof}
	Before starting, observe that, by the regularities guaranteed by Theorems \ref{thm:strongsols}, \ref{thm:linearised} and \ref{thm:adjoint}, all the integrals that we are going to write make sense in $\Lqt1$. 
	
	First observe that the cost functional $\mathcal{J}$ is convex and Fréchet-differentiable in the space $\mathcal{C}^0([0,T];H) \times \mathcal{C}^0([0,T];H) \times L^2(Q_T) \times L^2(Q_T)$. Next, in Theorem \ref{thm:frechet} we showed that the solution operator $\mathcal{S}$ is Fréchet-differentiable from $\mathcal{U}_R \times \mathcal{V}_R \subseteq (L^\infty(Q_T) \cap H^1(0,T;H)) \times L^\infty(Q_T)$ to $\mathbb{W}$. Moreover, since by standard results $\LT \infty V \cap \HT 1 H$ is embedded with continuity in  $\C 0 H$, we also have that the operator $(\mathcal{S}_1, \mathcal{S}_3)$ that selects the first and third components of $\mathcal{S}$ is Fréchet-differentiable from $\mathcal{U}_R \times \mathcal{V}_R$ to $(\C 0 H)^2$. Therefore, we can consider the \emph{reduced cost functional} $f: (\Lqt \infty \cap \HT 1 H) \times \Lqt \infty \to \R$, defined as 
	\[ f(u,v) := \mathcal{J}(\mathcal{S}_1(u,v), \mathcal{S}_3(u,v), u, v),  \]  
	which, by the chain rule, is Fréchet-differentiable in $\mathcal{U}_R \times \mathcal{V}_R$. 
	
	At this point, we can rewrite our optimal control problem (CP) through the reduced cost functional as the minimisation problem
	\[ \argmin_{(u,v) \in \, \Uad \times \Vad} f(u,v). \]
	Then, if $(\ub, \vb)$ is optimal, since $\Uad \times \Vad$ is convex and $f$ is Fréchet-differentiable, it has to satisfy the necessary optimality condition
	\[ f'(\ub, \vb) [ (u - \ub, v - \vb) ] \ge 0 \quad \forall (u,v) \in \Uad \times \Vad. \]
	When computing explicitly the derivative of $f$, we get that for any $(u,v) \in \Uad \times \Vad$ 
	\begin{align*}
		& \int_\Omega \alpha_\Omega (\phib(T) -\phi_\Omega) \xi(T) \, \de x  \, \de t + \int_0^T \int_\Omega \alpha_Q (\phib - \phi_Q) \xi \, \de x  \\
		& + \int_\Omega \beta_\Omega (\sigmab(T) -\sigma_\Omega) \rho(T) \, \de x  \, \de t + \int_0^T \int_\Omega \beta_Q (\sigmab - \sigma_Q) \rho \, \de x  \\
		& + \int_0^T \int_\Omega  \alpha_u \ub (u - \ub) \, \de x  \, \de t + \int_0^T \int_\Omega \beta_v \vb (v - \vb) \,\de x  \, \de t \ge 0,
	\end{align*}
	where $\xi = D \mathcal{S}_1(\ub,\vb)[u - \ub, v - \vb]$ and $\rho = D \mathcal{S}_3(\ub,\vb)[u - \ub, v - \vb]$ are the components of the solution $(\xi, \eta, \rho)$ to the linearised system \eqref{eq:xi}--\eqref{icl} in $(\phib, \mub, \sigmab)$ corresponding to $(h,k) = (u - \ub, v - \vb)$.
	
	Now observe that the right-hand sides and the final conditions of the adjoint system appear in this inequality, therefore by substituting equations \eqref{eq:p}, \eqref{eq:r} and \eqref{fca} in the previous expression, we find that for any $(u,v) \in \Uad \times \Vad$ 
	\begin{align*}
		& \int_\Omega (p+\tau q)(T) \xi(T) \, \de x  + \int_0^T \int_\Omega \big( - \partial_t (p +\tau q)  + AF''(\phib) q + Baq - BJ \ast q + \chi \Delta r \\ 
		& \qquad + \chi P(\phib) (p-r) - P'(\phib) (\sigmab + \chi (1-\phib) - \mub)(p-r) + p \hh'(\phib) \ub \big) \xi \, \de x  \, \de t \\ 
		& \quad + \int_\Omega r(T) \rho(T) \, \de x  + \int_0^T \int_\Omega \big(- \partial_t r - \Delta r - \chi q  - P(\phib) (p-r) \big) \rho \, \de x  \, \de t \\
		& \quad + \int_0^T \int_\Omega  \alpha_u \ub (u - \ub) \, \de x  \, \de t + \int_0^T \int_\Omega \beta_v \vb (v - \vb) \,\de x  \, \de t \ge 0.
	\end{align*}
	Now we integrate by parts in time, by using also the initial conditions \eqref{icl} on the linearised system, and in space, by using the boundary conditions \eqref{bca} and \eqref{bcl}, and, after cancellations, we find that equivalently for any $(u,v) \in \Uad \times \Vad$ 
	 \begin{align*}
	 	& \int_0^T \int_\Omega \big( p \xi_t  + \tau q \xi_t  + AF''(\phib) \xi q + Ba \xi q - B(J \ast \xi) q  + \chi \Delta \xi \, r \\ 
	 	& \qquad + \chi P(\phib) (p-r) \xi  - P'(\phib) (\sigmab + \chi (1-\phib) - \mub)(p-r) \xi + p \hh'(\phib) \ub \, \xi  \big) \, \de x  \, \de t \\ 
	 	& \quad + \int_0^T \int_\Omega \big( \rho_t r - \Delta \rho r - \chi \rho q  - P(\phib) (p-r) \rho \big) \, \de x  \, \de t \\
	 	& \quad + \int_0^T \int_\Omega  \alpha_u \ub (u - \ub) \, \de x  \, \de t + \int_0^T \int_\Omega \beta_v \vb (v - \vb) \,\de x  \, \de t \ge 0,
	 \end{align*}
 	where we also used the symmetry of the kernel $J$. By factoring out $p$, $q$ and $r$ respectively, we can rewrite the previous inequality as
 	\begin{align*}
 		& \int_0^T \int_\Omega p \left( \xi_t - P(\phib)(\rho - \chi \xi) - P'(\phib) (\sigmab + \chi (1-\phib) - \mub) \xi + \hh'(\phib) \ub \, \xi \right) \, \de x  \, \de t \\ 
 		& \quad + \int_0^T \int_\Omega q \left( \tau \xi_t + AF''(\phib) \xi + Ba \xi - BJ\ast \xi - \chi \rho  \right) \, \de x  \, \de t \\
 		& \quad + \int_0^T \int_\Omega r \left( \rho_t - \Delta \rho + \chi \Delta \xi + P(\phib)(\rho -\chi \xi) + P'(\phib) (\sigmab + \chi (1-\phib) - \mub) \xi \right) \, \de x  \, \de t \\
 		& \quad +  \int_0^T \int_\Omega  \alpha_u \ub (u - \ub) \, \de x  \, \de t + \int_0^T \int_\Omega \beta_v \vb (v - \vb) \,\de x  \, \de t \ge 0.
 	\end{align*}
 	Finally, we use equation \eqref{eq:q} and again integration by parts to also get that 
 	\begin{align*}
 		0 & = \int_0^T \int_\Omega \left( - q - \Delta p + P(\phib)(p-r) \right) \eta \, \de x  \, \de t \\
 		 & = \int_0^T \int_\Omega - \eta q  - \Delta \eta \, p +   P(\phib)(p-r) \eta \, \de x  \, \de t.
 	\end{align*}
 	Then, by adding this to the previous inequality, we at last infer that for any $(u,v) \in \Uad \times \Vad$ 
 	\begin{align*}
 		& \int_0^T \int_\Omega p \left( \xi_t - \Delta \eta - P(\phib)(\rho - \chi \xi - \eta) - P'(\phib) (\sigmab + \chi (1-\phib) - \mub) \xi + \hh'(\phib) \ub \, \xi \right) \, \de x  \, \de t \\ 
 		& \quad + \int_0^T \int_\Omega q \left( - \eta + \tau \xi_t + AF''(\phib) \xi + Ba \xi - BJ\ast \xi - \chi \rho  \right) \, \de x  \, \de t \\
 		& \quad + \int_0^T \int_\Omega r \left( \rho_t - \Delta \rho + \chi \Delta \xi + P(\phib)(\rho -\chi \xi -\eta) + P'(\phib) (\sigmab + \chi (1-\phib) - \mub) \xi \right) \, \de x  \, \de t \\
 		& \quad +  \int_0^T \int_\Omega  \alpha_u \ub (u - \ub) \, \de x  \, \de t + \int_0^T \int_\Omega \beta_v \vb (v - \vb) \,\de x  \, \de t \ge 0.
 	\end{align*}
 	To conclude, we notice that the expressions enclosed in the parentheses are exactly the equations \eqref{eq:xi}, \eqref{eq:eta}, \eqref{eq:rho} of the linearised system, up to their source terms. Hence, by substituting those into our inequality, we find that for any $(u,v) \in \Uad \times \Vad$ 
 	\begin{align*}
 		& \int_0^T \int_\Omega - p \, \hh(\phib) (u - \ub) \, \de x  \, \de t + \int_0^T \int_\Omega r (v -\vb) \, \de x  \, \de t \\ 
 		& \quad +  \int_0^T \int_\Omega  \alpha_u \ub (u - \ub) \, \de x  \, \de t + \int_0^T \int_\Omega \beta_v \vb (v - \vb) \,\de x  \, \de t \ge 0,
 	\end{align*}
 	which is exactly \eqref{var:ineq}. This concludes the proof of Theorem \ref{thm:optcond}.
\end{proof}

\begin{remark}
	Observe that, since $\Uad \times \Vad$ is closed and convex, \eqref{var:ineq} means that, if $\alpha_u >0$ and $\beta_v >0$, the optimal control $(\ub, \vb)$ is exactly the $L^2(Q_T)^2$-orthogonal projection of $( \alpha_u^{-1} \hh(\phib) \, p,  - \beta_v^{-1} r )$ onto $\Uad \times \Vad$. 
	In particular, if $u \equiv 0$, i.e.~we are only looking for a chemotherapy $v$ through the nutrient, it can be shown that, due to the structure of $\Vad$, the $L^2(Q_T)$-projection of $- \beta_v^{-1} r$ has the explicit form:
	\[ \vb(x,t) = \min \left\{ v_{\text{max}}(x,t), \max \left\{ - \beta_v^{-1} r(x,t), v_{\text{min}}(x,t) \right\} \right\} \quad \text{for a.e. } (x,t) \in Q_T. \]
	If we also want to optimise for a radiotherapy $u$, the actual description of the projection onto the space $\Uad$ is more difficult and has to be tackled numerically, due to the presence of the additional constraint $\norm{u}_{\HT 1 H} \le M$. However, this cannot be avoided, since the extra regularity on $u$ is needed to prove the global boundedness of the variable $\phi$, which is crucial for the whole analysis. Note that this kind of restriction is not unprecedented for controls acting as source terms in Cahn-Hilliard type equations, see for instance \cite{CGRS2022}. 
\end{remark}

\section*{Acknowledgements}

The author wishes to express his gratitude to professor Elisabetta Rocca for introducing him to this line of research and for several fruitful discussions and suggestions, without which this work would not have been possible. The author also wishes to thank the anonymous referees, who carefully read the manuscript and provided many comments that improved the quality of the paper. This research activity has been performed in the framework of the MIUR-PRIN Grant 2020F3NCPX ``Mathematics for industry 4.0 (Math4I4)'' and the GNAMPA (Gruppo Nazionale per l'Analisi Matematica, la Probabilit\`a e le loro Applicazioni) of INdAM (Istituto Nazionale di Alta Matematica).

\section*{Declarations}

The author has no conflict of interest to declare.

% SE USO BIBTEX
%\footnotesize
%\bibliographystyle{amsplain0}
%\bibliography{/Users/matteofornoni/Desktop/Lavori/Bibliografia/BibliografiaDottorato.bib}

\footnotesize

\end{document}